\numberwithin{equation}{section}
\def\R{{\mathbb R}}
\def\u{u}
\newcommand{\beq}{\begin{equation}}
\newcommand{\eeq}{\end{equation}}
\newcommand{\ben}{\begin{eqnarray}}
\newcommand{\een}{\end{eqnarray}}
\newcommand{\beno}{\begin{eqnarray*}}
\newcommand{\eeno}{\end{eqnarray*}}
\newcommand{\pa}{\partial}
\newtheorem{theorem}{\textbf Theorem}[section]
\newtheorem{lemma}{\textbf Lemma}[section]
\newtheorem{prop}{\textbf Proposition}[section]
\newtheorem{defin}{\textbf Definition}[section]
\newtheorem{remark}{\it Remark}
\numberwithin{equation}{section}
\numberwithin{equation}{section}\allowdisplaybreaks
\subjclass[2020]{35B45, 35B65,  76D03, 76W05}
\keywords{MHD equations, Littlewood-Paley theory, Horizontal dissipation, Asymptotic behavior, Tropical climate model.}
\begin{document}

\title[2D MHD equation with horizontal dissipation]
{Global regularity for the 2D MHD equations with horizontal dissipation and horizontal magnetic diffusion}

\author[M. Paicu, N. Zhu]{Marius Paicu$^{1}$ and
	Ning Zhu$^{2}$}

\address{$^1$ Universit\'e de Bordeaux, Institut de Math\'ematiques de Bordeaux, F-33405 Talence Cedex, France}

\email{marius.paicu@math.u-bordeaux.fr}

\address{$^2$ School of Mathematical Sciences, Peking University, 100871, Beijing, P. R. China }

\email{mathzhu1@163.com}
\begin{spacing}{1.5}

\begin{abstract} {This paper establishes the global regularity of classical solution to the 2D MHD system with only horizontal dissipation and horizontal magnetic diffusion in a strip domain $\mathbb{T}\times\R$ when the initial data is suitable small. To prove this, we combine the Littlewood-Paley decomposition with anisotropic inequalities to establish a crucial commutator estimate. We also analysis the asymptotic behavior of the solution.  In addition, the global existence and uniqueness of classical solution is obtained for the 2D simplified tropical climate model with only horizontal dissipations.}

\end{abstract}
\maketitle

\begin{section}{introduction}

The magnetic-hydrodynamics (MHD) system describe the motion of electrically conducting fluids such as plasmas, liquid metals, and electrolytes. They consist of a coupled system of the Navier-Stokes equations of fluid dynamics and Maxwell's equations of electromagnetism.
The MHD equations underlie many phenomena, such as the geomagnetic dynamo in geophysics,
solar winds and solar flares in astrophysics (see, e.g., \cite{Biskamp,davidson,pm}).	
	
The general 2D anisotropic MHD system can be written as follows:
\begin{equation}
\label{MHD1}
\left\{\begin{array}{l}
\partial_{t} u+(u \cdot \nabla) u-\nu_1\pa_1^2u-\nu_2\pa_2^2u=-\nabla P+(b \cdot \nabla) b, \quad (x,y)\in\Omega,~t\in\R_+,\\
\partial_{t} b+(u \cdot \nabla) b-\eta_1\pa_1^2b-\eta_2\pa_2^2b=(b \cdot \nabla) u, \\
\nabla \cdot u=0, \nabla \cdot b=0, \\
u(x, y, 0)=u_{0}(x, y), b(x, y, 0)=b_{0}(x, y).
\end{array}\right.
\end{equation}
where $u=u(x, y, t)$ denotes the velocity field, $P=P(x, y, t)$ is a scalar representing the pressure, $b=b(x, y, t)$ denotes the magnetic field of the fluid. The parameters 
$\nu_1,\nu_2\geq 0$ stand for the viscosity coefficient and $\eta_1,\eta_2\geq 0$ are the coefficient of the magnetic diffusion.

One of the most fundamental problems associate with the MHD equations is whether the global well-posedness result can be established. Although the global regularity results are obtained for both Navier-Stokes and Euler equations in two dimensions, it becomes much difficult for MHD due to the strong nonlinear coupling between the velocity equation and the magnetic induction equation. In the case of $\Omega=\R^2$, there are many results concerning this model. When (\ref{MHD1}) with Laplace dissipation in both velocity and magnetic equations, it is not hard to show that (\ref{MHD1}) possesses
a unique global solution with smooth initial data (see, e.g., \cite{dl,st}). If we ignore all of the dissipation ($\nu_1,\nu_2,\eta_1,\eta_2=0$), then (\ref{MHD1}) becomes the completely inviscid MHD and the global existence and regularity problems remain outstandingly open. In order to understand this problem, many mathematicians investigated  the intermediate case and a lot of significant progress has been made. For the case that \eqref{MHD1} with only Laplace dissipation in velocity equation ($\nu_1=\nu_2=\nu>0$, $\eta_1=\eta_2=0$), which called non-resistive MHD, Jiu and Niu \cite{JN2006} proved the local well-posedness in Soboble space $H^s$ with $s\geq3$. Later, Fefferman et. al. were able to weaken the regularity assumption to the initial data $(\u_0, b_0)\in H^s$ with $s>\frac d2$ in \cite{FMRR14} and then to $u_0 \in H^{s-1+\varepsilon}$ and $b_0\in H^s$ with $s>\frac d2$ in \cite{FMRR17}. Then Chemin et. al. in \cite{CMRR16} further improved the assumption to the Besov space $u_0\in B^{\frac d2-1}_{2,1}$ and $b_0\in B^{\frac d2}_{2,1}$, they obtained the local existence for $d=2$ and $d=3$, and uniqueness for $d=3$. The uniqueness of the case $d=2$ is obtained by Wan in \cite{Wan16}. Recently, Li et. al. made an important progress by reducing the assumption for initial data to the homogeneous Besov space $u_0\in \dot{B}^{\frac dp-1}_{p,1}$ and $b_0\in \dot{B}^{\frac dp}_{p,1}$ with $p\in[1, 2d)$~(see \cite{LTY17}). For the resistive MHD $(\nu_1=\nu_2=0, \eta_1=\eta_2=\eta>0),$ the global regularity is difficult even in the 2D case. In fact, whether or not resistive MHD equations alway possesses global smooth solutions is unknown no matter how smooth the initial data are. Some related work have been made in the past few years to understand this problem. If we replace the dissipation $-\eta\Delta b$ to fractional Laplacian $\eta(-\Delta)^\beta b$ with $\beta>1$, Cao et. al. in \cite{CWY14}, Jiu and Zhao in \cite{JZ15} obtain the global well-posedness via different approaches. Other interesting results on the resistive and non-resistive MHD systems can be found in \cite{CL18,HXY18,LXZ15,PZZ18,RWXZ14,RXZ16,WWX15,Yamazaki14}.

There are also some papers concerning about \eqref{MHD1} with partial dissipation. Besides the great interest in mathematics, the MHD equations with only partial viscosity and partial magnetic diffusion also have wide physical applicability. The partial dissipation turn out in certain physical regimes and after some suitable scaling. In \cite{CW11}, Cao and Wu studied \eqref{MHD1} with mixed dissipation that $\nu_1=0, \nu_2>0, \eta_1>0, \eta_2=0$, or $\nu_1>0, \nu_2=0, \eta_1=0, \eta_2>0$ in $\R^2$, they obtained the global regularity result for $H^2$ initial data. For \eqref{MHD1} with horizontal viscosity and horizontal magnetic diffusion ($\nu_1>0, \nu_2=0, \eta_1>0, \eta_2=0$), Cao et. al. showed some a priori estimates and gained the global regularity result under some extra condition (see \cite{CRW13}). In \cite{DZ15}, Du and Zhou investigated the anisotropic MHD with some mixed dissipations. Other interest results can be found in \cite{DJLW18,DLW19,LJWY20,WZ19} and the references therein.
	
In this paper, we investigate the following 2D MHD system with only horizontal viscosity and horizontal megnetic diffusion,
\begin{equation}
\label{system-MHD}
\left\{\begin{array}{l}
{\partial_{t} u+u\cdot \nabla u-\nu\partial_1^{2} u+\nabla p=b \cdot \nabla b}, \quad (x,y)\in\Omega,~t\in\R_+, \\
{\partial_t b+u\cdot \nabla b-\eta\partial_1^{2} b=b \cdot \nabla u},\\
{\nabla \cdot u=0,~~\nabla \cdot b=0},\\
u(x, y, 0)=u_0(x, y),~~b(x, y, 0)=b_0(x, y).
\end{array}\right.
\end{equation}
The spatial domain $\Omega$ here is taken to be
$$\Omega\triangleq \mathbb{T}\times \R,$$
where $\mathbb{T}=[0,1]$ being the periodic box and $\R$ being the whole line. Comparing with \cite{CRW13}, here the domain in $x$ is $\mathbb{T}$ rather then $\R$,
which can provide us some kind of Poincar\'e inequality. The first theorem following shows that \eqref{system-MHD} is global well-posedness under some smallness condition.  

\begin{theorem}
\label{thm1}
Assume $u_0,b_0\in H^s(\Omega)$ be two divergence-free vector fields with $s\geq2$ and satisfy the smallness condition  
\begin{equation}
\label{smallness condition}
\|(u_0,b_0)\|_{L^2}^2+\|(\pa_2u_0,\pa_2b_0)\|_{L^2}^2\leq \delta,
\end{equation}
for some $\delta>0$. Then system \eqref{system-MHD} with the initial data $(u_0,b_0)$ has a unique global solution $(u,b)$. In addition, $(u,b)$ satisfies
\begin{equation}
u,b\in L^\infty(\R_+;H^s(\Omega)),~\pa_1u,\pa_1b\in L^2(\R_+;H^s(\Omega)).
\end{equation}
\end{theorem}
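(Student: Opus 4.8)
The plan is to run a continuity (bootstrap) argument built on three tiers of energy estimates, exploiting that the only dissipative directions are horizontal while the smallness hypothesis \eqref{smallness condition} controls exactly the $L^2$ norm and the vertical derivative of the data. First I would record the basic $L^2$ balance: pairing the $u$-equation with $u$ and the $b$-equation with $b$, the pressure and transport terms drop by $\nabla\cdot u=\nabla\cdot b=0$, and the two coupling terms cancel after one integration by parts, giving $\tfrac12\tfrac{d}{dt}(\|u\|_{L^2}^2+\|b\|_{L^2}^2)+\nu\|\pa_1 u\|_{L^2}^2+\eta\|\pa_1 b\|_{L^2}^2=0$. The decisive step is the first vertical-derivative estimate. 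Applying $\pa_2$ to both equations and testing against $\pa_2 u,\pa_2 b$, the same structural cancellations remove the pressure, the transport terms $u\cdot\nabla\pa_2(\cdot)$, and the top-order magnetic coupling; one is left with four trilinear terms of the schematic form $\int(\pa_2\phi\cdot\nabla\psi)\cdot\pa_2\chi$ with $\phi,\psi,\chi\in\{u,b\}$. Here I would use the divergence-free condition to trade the dangerous vertical derivative: in the $j=2$ piece $\pa_2\phi_2=-\pa_1\phi_1$, so every surviving term carries exactly one horizontal derivative $\pa_1 A$ and two vertical derivatives $\pa_2 B,\pa_2 C$. The anisotropic product inequality
\beq
\int|fgh|\le C\|f\|_{L^2}\|g\|_{L^2}^{1/2}\|\pa_1 g\|_{L^2}^{1/2}\|h\|_{L^2}^{1/2}\|\pa_2 h\|_{L^2}^{1/2},
\eeq
applied with the $\pa_2$-split falling on $\pa_1 A$ (so it produces the controlled $\pa_1\pa_2 A$ rather than $\pa_2^2 A$), bounds each term by $\mathcal E^{3/4}\mathcal D^{3/4}$, where $\mathcal E=\|(u,b)\|_{L^2}^2+\|(\pa_2 u,\pa_2 b)\|_{L^2}^2$ and $\mathcal D=\|(\pa_1 u,\pa_1 b)\|_{L^2}^2+\|(\pa_1\pa_2 u,\pa_1\pa_2 b)\|_{L^2}^2$. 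Young's inequality then yields $\tfrac{d}{dt}\mathcal E+c_0\mathcal D\le \tfrac{c_0}{2}\mathcal D+C\mathcal E^3$.

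To convert this into a global bound I would invoke the Poincar\'e inequality supplied by the periodic factor $\mathbb T$. Averaging $\nabla\cdot u=\nabla\cdot b=0$ in $x$ forces $\int_{\mathbb T}u_2\,dx=\int_{\mathbb T}b_2\,dx=0$, so the vertical components and all $x$-oscillatory parts obey $\|f\|_{L^2}\lesssim\|\pa_1 f\|_{L^2}$ and the bulk of $\mathcal E$ is coercively dominated by $\mathcal D$. The only pieces escaping Poincar\'e are the horizontal means $\bar u_1,\bar b_1$, which satisfy the dissipation-free equations $\pa_t\bar u_1=\pa_2(\overline{b_1 b_2}-\overline{u_1 u_2})$ and $\pa_t\bar b_1=\pa_2(\overline{b_2 u_1}-\overline{u_2 b_1})$; their forcing is quadratic in fields that are themselves Poincar\'e-small, so a separate $L^2_y$ estimate controls them. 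Combining, $\mathcal E\lesssim\mathcal D$ and the differential inequality closes: for $\delta$ small the continuity argument keeps $\mathcal E(t)\le\delta$ for all time and makes $\int_0^\infty\mathcal D<\infty$ (indeed $\mathcal E$ decays, underlying the asymptotic analysis).

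With $\mathcal E$ controlled, the top-order $H^s$ bound is propagated by a Littlewood--Paley argument. Localizing with $\ddelt_j$ and pairing with $\ddelt_j u,\ddelt_j b$, the transport term produces the commutator $\sum_j 2^{2js}\int[\ddelt_j,u\cdot\nabla]u\cdot\ddelt_j u$, and the whole scheme rests on an \emph{anisotropic commutator estimate}: through Bony's decomposition and anisotropic Bernstein/H\"older inequalities one bounds this by a product in which every factor of $\nabla u$ either carries a horizontal derivative $\pa_1$ (absorbable by $\nu\|\pa_1 u\|_{H^s}^2$) or appears as the already-small $\pa_2 u$, times $\|u\|_{H^s}^2+\|b\|_{H^s}^2$. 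Feeding in the smallness of $\mathcal E$ from the previous step closes a Gronwall inequality for $\|(u,b)\|_{H^s}$ and simultaneously yields $\pa_1 u,\pa_1 b\in L^2(\R_+;H^s)$.

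The remaining items are routine: local existence and uniqueness for $H^s$ data with $s\ge2$ follow from a standard Galerkin/mollifier construction (the difference of two solutions is controlled in $L^2$ using $H^s\hookrightarrow L^\infty$), and the global solution is obtained by the usual continuation combined with the a priori bounds above. \textbf{The main obstacle} I anticipate is precisely the anisotropic commutator estimate: ordinary commutator bounds spend isotropic derivatives one cannot pay for here, so it must be engineered block by block to route every uncontrolled vertical derivative either onto a factor where $\pa_1\pa_2$ is available or onto the small quantity $\pa_2 u$; a secondary difficulty is the bookkeeping for the non-dissipative horizontal means $\bar u_1,\bar b_1$ described above.
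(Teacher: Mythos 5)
Your overall architecture (basic $L^2$ balance, then a $\pa_2$-energy estimate, then an $H^s$ bound via an anisotropic Littlewood--Paley commutator estimate, then standard existence and uniqueness) matches the paper's, and your identification of the four surviving trilinear terms and of the commutator estimate as a key obstacle is correct. But there is a genuine gap at the decisive middle step. With your notation $\mathcal{E},\mathcal{D}$, the per-term bound $\int|\pa_1 A\,\pa_2 B\,\pa_2 C|\lesssim \mathcal{E}^{3/4}\mathcal{D}^{3/4}$ is too lossy: after Young's inequality it gives $\tfrac{d}{dt}\mathcal{E}+c_0\mathcal{D}\le C\mathcal{E}^3$, and a cubic differential inequality of this form only controls $\mathcal{E}$ on a finite time interval, no matter how small the data. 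You attempt to close it by the coercivity $\mathcal{E}\lesssim\mathcal{D}$, but that inequality is false in this geometry: the $x$-averages $\bar u^1(y),\bar b^1(y)$ and their $\pa_2$-derivatives contribute to $\mathcal{E}$ yet contribute nothing to $\mathcal{D}$, since $\pa_1$ annihilates them. Indeed, pure shear states $u=(\bar u^1(y),0)$, $b=(\bar b^1(y),0)$ are exact steady solutions of \eqref{system-MHD} with $\mathcal{D}\equiv 0$ and $\mathcal{E}>0$; for the same reason your parenthetical claim that $\mathcal{E}$ decays contradicts Theorem \ref{thm2}, which asserts decay only of the oscillation part $(\widetilde u,\widetilde b)$. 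The separate $L^2_y$ estimate you invoke for the means can at best bound them; it cannot convert them into dissipation, so the step ``combining, $\mathcal{E}\lesssim\mathcal{D}$'' is a non sequitur and your bootstrap does not close.

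What closes the argument (and what the paper does) is a sharper trilinear estimate in which the dissipation enters at full power with a small prefactor: insert the decomposition $f=\bar f+\widetilde f$ into each trilinear term \emph{before} estimating, observe that the pure-mean contributions vanish identically (e.g.\ the term $A_{311}$ in the paper), and apply the Poincar\'e inequality \eqref{Poincare} only to oscillation factors, so that energy norms $\|\pa_2\widetilde{\,\cdot\,}\|_{L^2}$ upgrade to dissipation norms $\|\pa_1\pa_2\widetilde{\,\cdot\,}\|_{L^2}$. Every surviving term is then bounded by
\begin{equation*}
C\big(\|\pa_2(u,b)\|_{L^2}^{2}+\|\pa_2(u,b)\|_{L^2}^{4}\big)\,\|\pa_1(u,b)\|_{L^2}^{2}+\varepsilon\,\|\pa_1\pa_2(u,b)\|_{L^2}^{2},
\end{equation*}
i.e.\ by $(\mathcal{E}+\mathcal{E}^2)\mathcal{D}$ plus an absorbable piece, rather than $\mathcal{E}^{3/4}\mathcal{D}^{3/4}$. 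Integrating in time yields $F(t)\le F(0)+CF(t)^2+CF(t)^3$ for the natural energy functional $F$, and the smallness \eqref{smallness condition} closes the bootstrap with no coercivity needed. Your third tier (the $H^s$ estimate via the anisotropic commutator lemma) and your existence/uniqueness scheme are consistent with the paper, but both take the global small-energy bound as input, so the gap above is fatal to the proposal as written.
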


If we separate the function $(u, b)$ into the $x$-average part $(\bar{u}, \bar{b})$ and the remainder part $(\widetilde{u}, \widetilde{b})$, which are defined by
\begin{equation*}
\begin{split}
&\bar{u}(y)=\int_{\mathbb{T}}u(x',y)~dx'\quad\text{and}\quad\widetilde{u}(x,y)=u(x,y)-\bar{u}(y),\\
&\bar{b}(y)=\int_{\mathbb{T}}b(x',y)~dx'\quad\text{and}\quad\widetilde{b}(x,y)=b(x,y)-\bar{b}(y).
\end{split}
\end{equation*}
Then we can obtain the next theorem which showing that the remainder part $(\widetilde{u}, \widetilde{b})$ decay exponentially in time.

\begin{theorem}
\label{thm2}
Let $(u_0,b_0)$ satisfies the assumptions in Theorem \ref{thm1}, then 
\begin{equation*}
\|(\widetilde{u},\widetilde{b})(t)\|_{L^2}\leq Ce^{-\widetilde{C}t},
\end{equation*}
where $C,\widetilde{C}$ are two positive constant depending on $\nu,\eta$ and $(u_0,b_0)$. 
\end{theorem}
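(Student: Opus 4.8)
The plan is to derive the evolution equations satisfied by the fluctuation $(\widetilde u,\widetilde b)$, run an $L^2$ energy estimate on them, and then exploit the Poincaré inequality available in the periodic direction to convert the horizontal dissipation into a genuine coercive term. Subtracting the $x$-averaged system from \eqref{system-MHD} — and using that averaging over $\mathbb T$ kills $\partial_1^2$ and the horizontal pressure gradient, while $\nabla\cdot\bar u=0$ forces $\bar u_2\equiv 0$ — one obtains
\begin{equation*}
\partial_t\widetilde u-\nu\partial_1^2\widetilde u+\nabla\widetilde p=\widetilde{(b\cdot\nabla b)}-\widetilde{(u\cdot\nabla u)},\qquad \partial_t\widetilde b-\eta\partial_1^2\widetilde b=\widetilde{(b\cdot\nabla u)}-\widetilde{(u\cdot\nabla b)},
\end{equation*}
together with $\nabla\cdot\widetilde u=\nabla\cdot\widetilde b=0$.

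Taking the $L^2(\Omega)$ inner product of the first equation with $\widetilde u$ and of the second with $\widetilde b$ and adding, the pressure drops out by incompressibility and the dissipative terms contribute $\nu\|\partial_1\widetilde u\|_{L^2}^2+\eta\|\partial_1\widetilde b\|_{L^2}^2$. Since $\int_{\mathbb T}\widetilde u\,dx=\int_{\mathbb T}\widetilde b\,dx=0$, the one-dimensional Poincaré inequality on $\mathbb T$ gives, for some $c_0>0$,
\begin{equation*}
\nu\|\partial_1\widetilde u\|_{L^2}^2+\eta\|\partial_1\widetilde b\|_{L^2}^2\ \geq\ c_0\,\|(\widetilde u,\widetilde b)\|_{L^2}^2,
\end{equation*}
so the linear part already forces exponential decay; everything hinges on proving the nonlinear contribution is only a small perturbation of this dissipation.

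For this I would insert $u=\bar u+\widetilde u$, $b=\bar b+\widetilde b$ into each quadratic term and track the cancellations. The pure-mean advection $\bar u\cdot\nabla\bar u=\bar u_1\partial_1\bar u=0$ vanishes; $\int_\Omega(\bar u\cdot\nabla\widetilde u)\cdot\widetilde u=0$ after integrating by parts in $x$ (because $\partial_1\bar u=0$); the fully-fluctuation pieces $\int(\widetilde u\cdot\nabla\widetilde u)\cdot\widetilde u$ and $\int(\widetilde b\cdot\nabla\widetilde b)\cdot\widetilde u+\int(\widetilde b\cdot\nabla\widetilde u)\cdot\widetilde b$ vanish by the divergence-free/MHD cancellations. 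What survives are the mean–fluctuation interactions, typically of the form $\int_\Omega \widetilde u_2\,(\partial_2\bar u)\cdot\widetilde u$ and its magnetic analogues. The key point is that each such term carries a factor $\partial_2\bar u$ or $\partial_2\bar b$: applying Cauchy–Schwarz in $x$ slice-by-slice in $y$, the pointwise-in-$y$ Poincaré inequality, and the one-dimensional interpolation $\|\partial_2\bar u\|_{L^\infty_y}\le C\|\partial_2\bar u\|_{L^2_y}^{1/2}\|\partial_2^2\bar u\|_{L^2_y}^{1/2}$, each interaction is bounded by $C\|\partial_2\bar u\|_{L^\infty_y}\bigl(\|\partial_1\widetilde u\|_{L^2}^2+\|\partial_1\widetilde b\|_{L^2}^2\bigr)$. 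Here $\|\partial_2^2\bar u\|_{L^2}\le\|u\|_{H^2}$ is globally bounded by Theorem \ref{thm1}, while $\|\partial_2\bar u\|_{L^2}\le\|\partial_2 u\|_{L^2}$ is of size $\delta^{1/2}$ by the smallness assumption \eqref{smallness condition} propagated in time; hence each interaction term is $\le C\delta^{1/4}\bigl(\|\partial_1\widetilde u\|_{L^2}^2+\|\partial_1\widetilde b\|_{L^2}^2\bigr)$.

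Choosing $\delta$ small enough that $C\delta^{1/4}\le\tfrac12\min(\nu,\eta)$, the nonlinear terms are absorbed into half the dissipation, and combining with the Poincaré lower bound yields
\begin{equation*}
\frac{d}{dt}\|(\widetilde u,\widetilde b)\|_{L^2}^2+\widetilde C\,\|(\widetilde u,\widetilde b)\|_{L^2}^2\le 0,
\end{equation*}
whence Gronwall's lemma gives $\|(\widetilde u,\widetilde b)(t)\|_{L^2}\le Ce^{-\widetilde Ct}$ with $\widetilde C$ depending on $\nu,\eta$ and the Poincaré constant. I expect the main obstacle to be the control of the mean–fluctuation interaction terms: unlike the fully-fluctuating nonlinearities they enjoy no divergence-free cancellation, and closing the estimate requires combining the anisotropic Poincaré structure in $x$ with the smallness of $\|\partial_2\bar u\|_{L^2}$ and $\|\partial_2\bar b\|_{L^2}$ — precisely the quantities singled out in \eqref{smallness condition}, which must be shown within Theorem \ref{thm1} to remain of order $\delta$ for all time rather than merely at $t=0$.
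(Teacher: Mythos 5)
Your proof has the same skeleton as the paper's own proof in Section 6: subtract the $x$-averaged system \eqref{bar-MHD} to obtain \eqref{tilde-MHD}, run the $L^2$ estimate on $(\widetilde u,\widetilde b)$, check that every nonlinear contribution except the mean--fluctuation interactions $\int\widetilde u^2\partial_2\bar u\cdot\widetilde u$, $\int\widetilde b^2\partial_2\bar b\cdot\widetilde u$, etc.\ cancels, absorb those interactions into the horizontal dissipation, and finish with Poincar\'e and Gr\"onwall. The gap is in how you make the interaction terms small. You bound them by $C\|\partial_2\bar u\|_{L^\infty_y}\bigl(\|\partial_1\widetilde u\|_{L^2}^2+\|\partial_1\widetilde b\|_{L^2}^2\bigr)$ and then invoke $\|\partial_2\bar u\|_{L^\infty_y}\le C\|\partial_2\bar u\|_{L^2_y}^{1/2}\|\partial_2^2\bar u\|_{L^2_y}^{1/2}$. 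But the factor $\|\partial_2^2\bar u\|_{L^2}\le\|u\|_{H^2}$ is only \emph{bounded} by the global regularity estimate, never small: the hypothesis \eqref{smallness condition} imposes smallness only at the $L^2$ level of $(u_0,b_0)$ and $\partial_2(u_0,b_0)$, and the mean part $\bar u$ does not decay (that is the whole point of the theorem), so $\|\partial_2\bar u(t)\|_{L^\infty_y}$ is merely bounded by $C\,\delta^{1/4}\sup_t\|u\|_{H^2}^{1/2}$ for all time, which is large whenever the data's $H^2$ norm is. Consequently your absorption condition $C\delta^{1/4}\le\frac12\min(\nu,\eta)$ forces the admissible threshold $\delta$ to shrink as $\|(u_0,b_0)\|_{H^2}$ grows. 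That proves a strictly weaker statement than Theorem \ref{thm2}: throughout the paper (Propositions \ref{prop-2.1}, \ref{prop-Hs} and Section 6) the threshold $\delta$ depends only on $\nu,\eta$, and for data with tiny $\|(u_0,b_0)\|_{L^2}+\|\partial_2(u_0,b_0)\|_{L^2}$ but enormous $H^2$ norm your estimate does not close.

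The repair is to distribute the anisotropic norms the other way round, which is exactly what the paper does with its terms $M_{21},\dots,M_{24}$: keep $\partial_2\bar u$ in $L^2_y$ and put the fluctuation in $L^\infty_yL^2_x$,
\[
\Bigl|\int_\Omega\widetilde u^2\,\partial_2\bar u\cdot\widetilde u\,dxdy\Bigr|
\le\|\partial_2\bar u\|_{L^2_y}\|\widetilde u^2\|_{L^\infty_yL^2_x}\|\widetilde u\|_{L^2}
\le C\|\partial_2\bar u\|_{L^2_y}\|\widetilde u^2\|_{L^2}^{1/2}\|\partial_2\widetilde u^2\|_{L^2}^{1/2}\|\widetilde u\|_{L^2},
\]
then use the divergence-free relation $\partial_2\widetilde u^2=-\partial_1\widetilde u^1$ together with the Poincar\'e inequality \eqref{Poincare} applied to $\widetilde u^2$ and $\widetilde u$. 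This yields the bound $C\|\partial_2 u\|_{L^2}\|\partial_1\widetilde u\|_{L^2}^2$, whose prefactor is $\le (C_0\delta)^{1/2}$ uniformly in time by Proposition \ref{prop-2.1}; only first-order quantities enter, so absorption works with $\delta=\delta(\nu,\eta)$ independent of the data. With that single modification, the rest of your argument (the cancellations, the Poincar\'e coercivity of the dissipation, and Gr\"onwall) coincides with the paper's proof.
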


\begin{remark}
This theorem indicate that the solution $u$ will tend to its  $x$-average $\bar{u}$ as $t\rightarrow \infty$. The same fact also holds for $b$.
\end{remark}

In this paper, we are also concerned
with the tropical climate model in the simplified case where the temperature $\theta$ is a constant. 
The classical tropical climate model can be read as:

\begin{equation}
\label{tropical-model}
\begin{cases}
\partial_t u^\epsilon +u^\epsilon\cdot \nabla u^\epsilon +\text{div\,}(v^\epsilon \otimes v^\epsilon )-\nu\Delta u^\epsilon+\nabla p^\epsilon =0\\
\partial_t v^\epsilon +u^\epsilon\cdot \nabla v^\epsilon  -\eta\Delta v^\epsilon+\frac{1}{\epsilon}\nabla \theta^\epsilon =-v^\epsilon \cdot\nabla u^\epsilon\\
\partial_t\theta^\epsilon +u^\epsilon \cdot\nabla\theta^\epsilon +\frac{1}{\epsilon}\text{div\,} v^\epsilon =0\\
\text{div\,} u^{\epsilon}=0,
\end{cases}
\end{equation}
where the vector fields $u^{\epsilon}=(u^{\epsilon1},u^{\epsilon2})$ and $v^{\epsilon} = (v^{\epsilon1}, v^{\epsilon2})$ denote the barotropic mode and the first baroclinic mode of the velocity, respectively. The scalar $p^{\epsilon}$ denotes the pressure and $\theta^{\epsilon}$ the temperature. The parameters $\nu,\eta\geq 0$ denote the viscosity and $\epsilon$ is proportional to the Froude number. In physical situations, $\epsilon$ normally suppose to be very small \cite{Majda07,MK03}. We remark that the tropical climate model bears some similarities but is different from
the MHD equations \eqref{MHD1}.
A more subtle difference is that
$b\cdot\nabla u$ in \eqref{MHD1} has a negative sign while $v^{\varepsilon}\cdot \nabla u^{\varepsilon}$ has a positive sign in \eqref{tropical-model}. This sign makes
a difference in the study of the global existence and regularity problem on \eqref{tropical-model}.

Equations \eqref{tropical-model} without dissipation was
derived by Frierson, Majda and Pauluis for large-scale dynamics of precipitation fronts
in the tropical atmosphere \cite{FMP04}. Its viscous counterpart with the standard Laplacian
can be derived by the same argument from the viscous primitive equations (see \cite{LT16-1}). Later on, many literatures
investigated \eqref{tropical-model} with partial or fractional Laplacian which can be found in \cite{DWWYZ19,DWWZ19,DWY19,LT16-2} and the reference therein.

By passing to the limit when $\epsilon$ is converging to zero, we formally obtain the following simplified system
\begin{equation}
\label{TC}
\begin{cases}
\partial_t u +u\cdot \nabla u -\nu\Delta u+\nabla p =-v\cdot\nabla v,\\
\partial_t v +u\cdot \nabla v  -\eta\Delta v+\nabla \Phi =-v \cdot\nabla u,\\
\text{div\,} u=0,~\text{div\,} v=0.
\end{cases}
\end{equation}
Indeed, according to the classical energy estimates, we have $(u^\varepsilon,v^\varepsilon)\in L^\infty([0,T];L^2(\Omega))\cap L^2([0,T];H^1(\Omega))$, $\theta^{\varepsilon}\in L^\infty([0,T];L^2(\Omega))$. Then one can deduce $(u^{\varepsilon},v^{\varepsilon})$ convergence weakly to its limit $(u,v)$ in $L^\infty([0,T];L^2(\Omega))\cap L^2([0,T];H^1(\Omega))$ and strongly in $L^2([0,T];L^2(\Omega))$ by compact embedding. $\theta^{\varepsilon}$ convergence weakly to $\theta$ in $L^\infty([0,T];L^2(\Omega))$.

According to the third equation of system \eqref{tropical-model},
$$\epsilon(\partial_t\theta^\epsilon+u^\epsilon\cdot\nabla\theta^\epsilon)+\text{div\,} v^\epsilon=0.$$
For any $\varphi\in C^{\infty}_0 ([0,T];\Omega)$,
$$\epsilon\int_0^T\int_{\Omega}(\theta^\epsilon\cdot\partial_t\varphi+ \theta^\epsilon u^\epsilon\cdot \nabla\varphi )~dxdt+\int_0^T\int_{\Omega} v^\epsilon\cdot \nabla\varphi~ dxdt=0.$$
By passing the limit $\varepsilon\rightarrow 0$, we have
$$\int_0^T\int_{\Omega} v\cdot \nabla\varphi~dxdt=0,$$
which implies $\text{div\,} b=0$ in the sense of distributions.

On the other hand, by taking the $\text{div}$ on the equation of $v^\epsilon$ in system \eqref{tropical-model}, we get
$$\partial_t \text{div\,} v^\epsilon +\text{div\,}(u^\epsilon\cdot\nabla v^\epsilon+v^\varepsilon\cdot\nabla u^\varepsilon)-\eta\Delta \text{div\,} v^\epsilon+\frac{1}{\epsilon}\Delta \theta^\epsilon=0,$$
and we get $\frac 1\epsilon \nabla \theta^\epsilon$ converge to $\nabla\Phi\triangleq-\nabla \Delta^{-1}[\partial_t \text{div\,} v +\text{div\,}(u\cdot\nabla v+v\cdot\nabla u)-\eta\Delta \text{div\,} v]$ in the sense of distributions. 

Taking the limit $\varepsilon\rightarrow 0$ in the second equation of \eqref{tropical-model} and replacing $\frac{1}{\varepsilon}\nabla\theta^{\varepsilon}$ by $\nabla \Phi$, we get the second equation of \eqref{TC}.

Here we consider system \eqref{TC} with only horizontal dissipation which can be read as:
 \begin{equation}
 \label{system-tropical}
 \begin{cases}
 \partial_t u +u\cdot \nabla u -\nu\pa_1^2 u+\nabla p =-v\cdot\nabla v,\\
 \partial_t v +u\cdot \nabla v  -\eta\pa_1^2 v+\nabla \Phi =-v \cdot\nabla u,\\
 \text{div\,} u=0,~\text{div\,} v=0,\\
u(x, y, 0)=u_0(x, y),~~v(x, y, 0)=b_0(x, y).
 \end{cases}
 \end{equation}
Comparing with \eqref{system-MHD}, the forcing term of system \eqref{system-tropical} has different sign, which give us more cancellation in the energy estimates. The main result can be stated as follows. 

\begin{theorem}
\label{thm1-TC}
	Assume $u_0,v_0\in H^s(\Omega)$ be two divergence-free vector fields with $s\geq2$. Then system \eqref{system-tropical} with the initial data $(u_0,v_0)$ has a unique global solution $(u,v)$. In addition, $(u,v)$ satisfies
	\begin{equation}
	u,v\in L^\infty([0,T];H^s(\Omega)),~\pa_1u,\pa_1v\in L^2([0,T];H^s(\Omega)),
	\end{equation}
for any $T>0$.
\end{theorem}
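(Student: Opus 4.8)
The plan is to establish the theorem through uniform a priori estimates on smooth approximate solutions, followed by a compactness/passage-to-the-limit argument for existence and a Gr\"onwall estimate on the difference of two solutions for uniqueness. First I would record the basic $L^2$ identity: testing the two equations of \eqref{system-tropical} against $u$ and $v$ respectively, the self-transport terms and the pressure gradients vanish by $\text{div}\,u=\text{div}\,v=0$, while the two coupling terms combine as
\begin{equation*}
-\int_\Omega (v\cdot\nabla v)\cdot u\,dxdy-\int_\Omega (v\cdot\nabla u)\cdot v\,dxdy=-\int_\Omega v\cdot\nabla(u\cdot v)\,dxdy=\int_\Omega (\text{div}\,v)(u\cdot v)\,dxdy=0 .
\end{equation*}
This is exactly the favorable sign advertised after \eqref{system-tropical}; it yields the global bound $\sup_{t}\|(u,v)\|_{L^2}^2+\int_0^T(\nu\|\partial_1 u\|_{L^2}^2+\eta\|\partial_1 v\|_{L^2}^2)\,dt\le C$ with no smallness required.

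Next I would propagate $H^s$ regularity. Applying $\partial^\alpha$ with $|\alpha|\le s$, testing against $(\partial^\alpha u,\partial^\alpha v)$ and summing, the same cancellation eliminates the top-order coupling, since $-\int v\cdot\nabla\partial^\alpha v\cdot\partial^\alpha u-\int v\cdot\nabla\partial^\alpha u\cdot\partial^\alpha v=\int(\text{div}\,v)\,\partial^\alpha u\cdot\partial^\alpha v=0$, and likewise the top-order self-transport contributions vanish. What remains on the right-hand side are commutator terms, i.e.\ products of derivatives of $u$ and $v$ in which at least one factor carries fewer than $s$ derivatives, to be balanced against the horizontal dissipation $\nu\|\partial_1 u\|_{H^s}^2+\eta\|\partial_1 v\|_{H^s}^2$.

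The crux, and the main obstacle, is to absorb these commutators using dissipation in the horizontal direction only, the vertical derivatives $\partial_2$ being undamped. I would handle this with anisotropic Ladyzhenskaya-type inequalities such as $\|g\|_{L^\infty_x L^2_y}\lesssim\|g\|_{L^2}^{1/2}\|\partial_1 g\|_{L^2}^{1/2}$ and $\|g\|_{L^2_x L^\infty_y}\lesssim\|g\|_{L^2}^{1/2}\|\partial_2 g\|_{L^2}^{1/2}$, together with the divergence-free relations $\partial_2 u_2=-\partial_1 u_1$ and $\partial_2 v_2=-\partial_1 v_1$ (which trade an undamped vertical derivative for a damped horizontal one), so as to arrange that every term either carries a $\partial_1$-controlled factor absorbable into the dissipation or is bounded by an integrable-in-time multiple of the energy. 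To feed these estimates I would first secure a global bound on $\|\nabla u\|$ and $\|\nabla v\|$ coming from the two-dimensional vorticity structure: passing to the Els\"asser-type variables $w=u+v$ and $z=u-v$, the nonlinear parts become pure self-transport, so each of $w,z$ solves a horizontally dissipative Navier-Stokes equation (coupled only through the benign horizontal term $\tfrac{\nu-\eta}{2}\partial_1^2(\cdot)$), and their scalar vorticities satisfy transport-horizontal-diffusion equations with no stretching, hence obeying $L^p$ maximum-principle bounds. Gr\"onwall then closes the $H^s$ estimate on any $[0,T]$.

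Finally, existence follows by constructing approximate solutions (Friedrichs mollification or a Galerkin scheme), noting that the above bounds are uniform in the approximation parameter, and passing to the limit via the Aubin-Lions lemma; uniqueness follows by writing the equations for the difference of two solutions, exploiting the same coupling cancellation, and applying Gr\"onwall against the $H^s$ (in particular Lipschitz) bound of one solution. I expect the genuine difficulty to be concentrated entirely in the anisotropic commutator bounds of the third step, where the one-directional dissipation, the incompressibility constraint, and the cancellation from the sign of the forcing must be used simultaneously.
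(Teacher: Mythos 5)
Your architecture coincides with the paper's: a cancellation-based $L^2$ bound requiring no smallness, an $H^1$-level bound, an $H^s$ propagation estimate in which the top-order transport and coupling terms cancel and the remaining anisotropic commutators are absorbed by the horizontal dissipation (the paper isolates this step as its Lemma 8.2, a Littlewood--Paley/Bony commutator estimate proved as in its Appendix), then regularization/limit for existence and an energy estimate on the difference for uniqueness. The genuinely different ingredient in your proposal is the route to the $H^1$-level bound: you pass to $w=u+v$, $z=u-v$, note that each satisfies a self-transport equation with horizontal dissipation, coupled to the other only through the linear term $\frac{\nu-\eta}{2}\partial_1^2$, and estimate the scalar vorticities, which in 2D obey stretching-free equations. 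This is correct in substance and cleaner than the paper's Lemma 7.1, which instead performs a direct $\partial_2$-energy estimate using anisotropic H\"older/interpolation, the $\bar{f}/\widetilde{f}$ decomposition, and the cancellation of $\int_{\Omega} v\cdot\nabla\partial_2 v\cdot\partial_2 u\,dxdy+\int_{\Omega} v\cdot\nabla\partial_2 u\cdot\partial_2 v\,dxdy$; your route makes the favorable sign structurally transparent and even yields the stronger conclusion $\nabla(u,v)\in L^\infty L^2$, $\partial_1\nabla(u,v)\in L^2L^2$. One claim needs repair, though: when $\nu\neq\eta$ the coupled vorticity system does \emph{not} obey $L^p$ maximum principles, since the cross term $\frac{\nu-\eta}{2}\partial_1^2\omega_z$ in the $\omega_w$ equation has no sign; but the $L^2$ energy estimate closes, because after integration by parts the cross terms are bounded by $\frac{|\nu-\eta|}{2}\bigl(\|\partial_1\omega_w\|_{L^2}^2+\|\partial_1\omega_z\|_{L^2}^2\bigr)$ and $\frac{\nu+\eta}{2}-\frac{|\nu-\eta|}{2}=\min(\nu,\eta)>0$, and $L^2$ is all you need.

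Two further points, which are where your write-up has genuine gaps rather than stylistic differences. First, all the real work of the theorem sits in the step you only sketch: the anisotropic commutator bounds at level $H^s$. Your toolkit (anisotropic Ladyzhenskaya inequalities plus $\partial_2 u^2=-\partial_1 u^1$, $\partial_2 v^2=-\partial_1 v^1$) is the right one and does close -- this is exactly the paper's Lemma 8.2 -- but two technicalities matter: on $\Omega=\mathbb{T}\times\mathbb{R}$ the inequality $\|g\|_{L^\infty_x L^2_y}\lesssim\|g\|_{L^2}^{1/2}\|\partial_1 g\|_{L^2}^{1/2}$ fails on the $x$-mean part, so one must split $g=\bar g+\widetilde g$ and handle the mean separately (this decomposition pervades the paper's proofs); and differentiating with $\partial^\alpha$, $|\alpha|\le s$, tacitly assumes $s\in\mathbb{N}$, whereas the theorem allows real $s\ge2$, which is one reason the paper works with the blocks $\Delta_q$. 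Also, some coefficients in your Gr\"onwall step (e.g. $\|\partial_2 v\|_{L^2}^2$) are bounded but not integrable in time; this still gives the stated bound on every finite $[0,T]$, which is all the theorem claims, but not uniform-in-time control. Second, uniqueness ``against the Lipschitz bound'' fails at the endpoint $s=2$: in two dimensions $\nabla u_1\in H^{1}\not\hookrightarrow L^\infty$. The paper's Section 5 circumvents this with anisotropic H\"older, e.g. $\int_{\Omega}\delta u\cdot\nabla u_1\cdot\delta u\,dxdy\le\|\delta u\|_{L^2}\|\nabla u_1\|_{L^\infty_yL^2_x}\|\delta u\|_{L^\infty_xL^2_y}$, interpolation, and absorption of $\|\partial_1\delta u\|_{L^2}$ into the dissipation; with your tools this is a routine fix, but as stated your Gr\"onwall closure is not justified at $s=2$.
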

\begin{remark}
The result in Theorem \ref{thm1-TC} is also true for the case $\Omega=\R^2$.
\end{remark}

The rest of this paper is divided into six sections and an appendix. The second section gives the introduction of Littlewood-Paley decomposition and Besov space in $\Omega$, and proves some properties of the functions $\bar{u}$ and $\widetilde{u}$. In the third section, we establish the global $L^2$ estimates for $(u,b)$ and $(\pa_2u,\pa_2b)$. In section 4, we provide the regularity estimate for $(u,b)$ and section 5 completes the proof of Theorem \ref{thm1}. In section 6, we present the proof of Theorem \ref{thm2}. In the last section, we give the proof of Theorem \ref{thm1-TC}. In Appendix A, we provides the details of proving Lemma \ref{three-linear term}, which is a crucial tool to deal with the convection term in the energy estimates. 

Let us end this section by the notations that we shall use in this context.

Through out this paper, $C$ stands for some real positive constant which may vary from line to line. $\left\{b_{q}\right\}$ stands for a generic sequence in $\ell^{1}$ which may be different in each
occurrence. Here, we have denotes the $\ell^{1}$ space of summable sequences with the norm $\|\left\{b_{q}\right\}_{q}\|_{\ell^{1}}=\sum_{q}\left|b_{q}\right|$. $\|f(x,y)\|_{L^r}$ denotes the norm in the isotropic Lebesgue space $L^r(\Omega)$ while $\|f(x,y)\|_{L^q_yL^p_x}:=\bigg\|\|f(x,y)\|_{L^p_x}\bigg\|_{L^q_y}$ denotes the norm in the anisotropic Lebesgue space $L^q_yL^p_x(\Omega)\triangleq L^q_y(\R; L^p_x(\mathbb{T}))$.

\begin{section}{Preliminaries}
In this section, we shall first introduce the Littlewood-Paley theory in $\Omega$, which will be used frequently in the rest of this paper. And then prove some important properties of the function $\bar{u}$ and $\widetilde{u}$. Before starting the main body, we recall the definitions of the Fourier transform and inverse Fourier transform in $\Omega$:
\begin{equation*}
\mathcal{F}f(k,\xi)=\widehat{f}(k,\xi)=\frac{1}{2\pi}\int_{\Omega}f(x,y)e^{-i(xk+y\xi)}~dxdy,
\end{equation*}
\begin{equation*}
\mathcal{F}^{-1}f(x,y)=\check{f}(x,y)=\sum_{k=-\infty}^{\infty}\int_{\R}f(k,\xi)e^{i(xk+y\xi)}~d\xi.
\end{equation*}

Next, we present the Littlewood-Paley theory in $\Omega$ which plays an important role in the proof of our results. More details about Littlewood-Paley theory and related materials can be found in \cite{BCD11} for whole space situation and \cite{Danchin05,DHWX20} for periodic case. 

Let $\chi(\eta,\xi)$ be a smooth function supported on the ball $\mathcal{B} \triangleq\left\{(\eta,\xi) \in \R^2:\sqrt{\eta^2+\xi^2} \leq \frac{4}{3}\right\}$ and $\varphi(\eta,\xi)$ be a smooth function supported on the ring $\mathcal{C} \triangleq\left\{(\eta,\xi) \in \R^2: \frac{3}{4} \leq \sqrt{\eta^2+\xi^2} \leq \frac{8}{3}\right\}$ such that
\begin{equation*}
\chi(\eta,\xi)+\sum_{q \geq 0} \varphi\left(2^{-q}\eta, 2^{-q} \xi\right)=1, \quad \text { for all } \quad (\eta,\xi) \in \R^2,
\end{equation*}
\begin{equation*}
\sum_{q \in \mathbb{Z}} \varphi\left(2^{-q}\eta,2^{-q} \xi\right)=1, \quad \text { for all } \quad (\eta,\xi) \in \R^2 \backslash\{0\}.
\end{equation*}
Then for every $u \in \mathcal{S}^{\prime}$ (temperate distribution on $\Omega$), we define the non-homogeneous Littlewood-Paley operators as follows,
\begin{equation*}
\Delta_{q} u=0 \text { for } q \leq-2,
\end{equation*}
\begin{equation*}
\begin{split}
\Delta_{-1} u\triangleq\chi(D) u=\mathcal{F}^{-1}(\chi(k,\xi) \widehat{u}(k,\xi))
=\int_{\mathbb{T}}\int_{\R} \widetilde{h}(\mathbf{x}')u(\mathbf{x}-\mathbf{x}')~d\mathbf{x}',
\end{split}
\end{equation*}
with
\begin{equation}
\begin{split}
\widetilde{h}(\mathbf{x}')=\widetilde{h}(x,y)=\frac{1}{2\pi}\sum_{k\in\mathbb{Z}}\int_{\R}\chi(k,\xi)e^{i(kx+\xi y)}d\xi,
\end{split}
\end{equation}
and
\begin{equation*}
\begin{split}
\Delta_{q} u\triangleq\varphi\left(2^{-q} D\right) u=\mathcal{F}^{-1}\left(\varphi\left(2^{-q}k, 2^{-q}\xi\right) \widehat{u}(k,\xi)\right)
=\int_{\mathbb{T}}\int_{\R} {h_q}(\mathbf{x}')u(\mathbf{x}-\mathbf{x}')~d\mathbf{x}', \quad \forall q \geq 0, 
\end{split}
\end{equation*}
where
\begin{equation}\label{hq}
\begin{split}
{h_q}(\mathbf{x}')={h_q}(x,y)=\frac{1}{2\pi}\sum_{k\in\mathbb{Z}}\int_{\R}\varphi(2^{-q}k,2^{-q}\xi)e^{i(kx+\xi y)}d\xi.
\end{split}
\end{equation}
And we also define the low frequency cut-off operator:
\begin{equation*}
\begin{split}
S_{q} u=\sum_{j=-1}^{q-1} \Delta_{j} u.
\end{split}
\end{equation*}
With these operators, we can define the Littlewood-Paley decomposition, for any $u\in\mathcal{S}^{\prime}(\Omega)$,
\begin{equation*}
u=\sum_{q=-1}^{\infty} \Delta_{q} u.
\end{equation*}
In terms of the operators $\Delta_{q}$ and $S_{q},$ we can write a standard product of two functions as a sum of paraproducts, which is called Bony's decomposition as in the whole space case (see, e.g., \cite{BCD11,Bony81})
$$
f g=T_{f} g+T_{g} f+R(f, g)
$$
where
$$
T_{f} g=\sum_{q} S_{q-1} f \Delta_{q} g, \quad R(f, g)=\sum_{q} \sum_{k>q-1} \Delta_{k} f \tilde{\Delta}_{k} g
$$
with $\widetilde{\Delta}_{k}=\Delta_{k-1}+\Delta_{k}+\Delta_{k+1}$.

Next, we state the definition of non-homogeneous Besov spaces in $\Omega$ through the dyadic decomposition.

\begin{defin}
For $s \in \mathbb{R}$ and $1 \leq p, r \leq \infty$, the non-homogeneous Besov space $B_{p, r}^{s}$ in $\Omega$ is defined by
$$
B_{p, r}^{s}=\left\{f \in \mathcal{S}^{\prime} ;\|f\|_{B_{p, r}^{s}}<\infty\right\},
$$
where
\begin{equation*}
\begin{split}
\|f\|_{B_{p, r}^{s}}=\left\{\begin{array}{l}
\sum_{q \geq-1}\left(2^{q s}\left\|\Delta_{q} f\right\|_{L^{p}}^{r}\right)^{\frac{1}{r}}, \text { for } r<\infty, \\
\sup _{q \geq-1} 2^{q s}\left\|\Delta_{q} f\right\|_{L^{p}}, \quad \text { for } r=\infty.
\end{array}\right.	
\end{split}
\end{equation*}
\end{defin}
\noindent
We point out that when $p=r=2,$ for all $s \in \mathbb{R},$ we have $B_{2,2}^{s}\left(\Omega\right)=H^{s}\left(\Omega\right).$

The following Bernstein type inequalities are useful tools on Fourier localized functions, as these inequalities trade integrability for derivatives.
\begin{lemma}[Bernstein inequality]
Let $k \in \mathbb{N} \cup\{0\}$, $1 \leq a \leq b \leq \infty .$ \\
(1)~There exists a constant $C_{1}$ such that for some integer $q,$ 
$$
\left\|\nabla^{\alpha} \Delta_qf\right\|_{L^{b}} \leq C_{1} 2^{q\left(k+2\left(\frac{1}{a}-\frac{1}{b}\right)\right)}\|\Delta_qf\|_{L^{a}}, \quad k=|\alpha|,
$$
and
$$
\left\|S_qf\right\|_{L^{b}} \leq C_{1} 2^{2q\left(\frac{1}{a}-\frac{1}{b}\right)}\|S_qf\|_{L^{a}}.
$$
(2)~There exist two positive constants $C_{2},C_{3}$ (depending on $a$ and $b$) such that, for any integer $q \geq 0$,
\begin{equation*}
C_{2} 2^{q k}\|\Delta_qf\|_{L^{b}} \leq\left\|\nabla^{\alpha} \Delta_qf\right\|_{L^{b}} \leq C_{3} 2^{q\left(k+2\left(\frac{1}{a}-\frac{1}{b}\right)\right)}\|\Delta_qf\|_{L^{a}}, \quad k=|\alpha|.
\end{equation*}
\end{lemma}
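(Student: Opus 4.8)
The plan is to follow the classical proof of Bernstein's inequalities, carefully adapted to the mixed domain $\Omega=\mathbb{T}\times\R$, in which the first Fourier variable $k$ ranges over the integers while the second variable $\xi$ ranges over $\R$. Everything rests on two ingredients already available: the convolution representation of the dyadic blocks recorded in the excerpt (the kernels $h_q$ of \eqref{hq}), and Young's convolution inequality on the locally compact abelian group $\Omega$, namely $\|g*h\|_{L^b(\Omega)}\le\|g\|_{L^c(\Omega)}\|h\|_{L^a(\Omega)}$ whenever $1+\frac1b=\frac1a+\frac1c$. Since $1\le a\le b$, the conjugate exponent $c$ defined by $\frac1c=1-(\frac1a-\frac1b)$ lies in $[1,\infty]$, so the inequality applies.

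First I would prove the upper estimates. Fix $\alpha$ with $|\alpha|=k$ and a fattened cut-off $\widetilde\varphi$ equal to $1$ on $\operatorname{supp}\varphi$, so that $\nabla^\alpha\Delta_q f=g_q^\alpha*\Delta_q f$, where $g_q^\alpha$ is the multiplier with symbol $(ik)^{\alpha_1}(i\xi)^{\alpha_2}\widetilde\varphi(2^{-q}k,2^{-q}\xi)$. On the support of $\widetilde\varphi(2^{-q}\cdot)$ one has $\sqrt{k^2+\xi^2}\sim 2^q$, so I factor this symbol as $2^{qk}\psi(2^{-q}k,2^{-q}\xi)$ with $\psi(\eta,\xi)=(i\eta)^{\alpha_1}(i\xi)^{\alpha_2}\widetilde\varphi(\eta,\xi)$ a fixed, smooth, compactly supported function. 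The associated whole-space kernel is $2^{qk}2^{2q}(\mathcal{F}^{-1}_{\R^2}\psi)(2^q\cdot)$, whose $L^c(\R^2)$ norm equals $2^{qk}2^{2q(1-\frac1c)}\|\mathcal{F}^{-1}_{\R^2}\psi\|_{L^c}=C\,2^{q(k+2(\frac1a-\frac1b))}$, using $1-\frac1c=\frac1a-\frac1b$. Controlling the $L^c(\Omega)$ norm of $g_q^\alpha$ by this quantity (see the last paragraph) and invoking Young's inequality yields the upper bound in (1) and (2); running the same computation with the ball cut-off $\chi$ and $\alpha=0$ gives the stated estimate for $S_q f$.

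Next I would treat the lower bound of (2), which is exactly where the hypothesis $q\ge0$ enters: the support of $\varphi(2^{-q}\cdot)$ is then an annulus bounded away from the origin. Starting from the elementary identity $|\zeta|^{2k}=\sum_{|\beta|=k}\binom{k}{\beta}\zeta^{2\beta}$ for $\zeta=(k,\xi)$, I would write $\Delta_q f=\sum_{|\beta|=k}c_\beta\,n_{q,\beta}(D)\,\partial^\beta\Delta_q f$, where $n_{q,\beta}$ is the multiplier with symbol $\zeta^\beta|\zeta|^{-2k}\widetilde\varphi(2^{-q}\zeta)$ and $\widetilde\varphi\equiv1$ near $\mathcal{C}$. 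Because $\zeta^\beta|\zeta|^{-2k}$ is homogeneous of degree $-k$, the rescaled symbol $\zeta^\beta|\zeta|^{-2k}\widetilde\varphi(\zeta)$ is smooth and compactly supported away from the origin, so the kernel of $n_{q,\beta}$ has $L^1(\Omega)$ norm of size $2^{-qk}$. Young's inequality then gives $\|\Delta_q f\|_{L^b}\le C\,2^{-qk}\sup_{|\beta|=k}\|\partial^\beta\Delta_q f\|_{L^b}$, i.e. the asserted lower bound, with the convention that $\nabla^\alpha$ denotes the full $k$-th order gradient.

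I expect the genuine difficulty to be the kernel estimates on the anisotropic domain $\Omega$ rather than the symbol algebra: unlike the pure whole-space case the $x$-frequency is summed over $\mathbb{Z}$ instead of integrated, so the dilation $\zeta\mapsto 2^{-q}\zeta$ does not act cleanly in that variable and the $L^c$ and $L^1$ norms above are not literally scale invariant. The resolution is to use the Poisson summation formula, which represents each kernel (such as $h_q$ or $n_{q,\beta}$) as the $x$-periodization of the corresponding whole-space Schwartz kernel. Since $q\ge0$ forces these kernels to concentrate at scale $2^{-q}\le1$, their rapid decay ensures the periodization costs only an absolute multiplicative constant, so the clean whole-space scaling survives on $\mathbb{T}\times\R$. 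The same mechanism explains why the lower bound must exclude $q=-1$: there the ball $\mathcal{B}$ meets the origin, $|\zeta|^{-2k}$ is no longer a smooth symbol, and only the upper estimates persist, consistent with the restriction $q\ge0$ in part (2).
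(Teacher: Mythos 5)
The paper does not prove this lemma at all: it is stated as a standard tool, with the Littlewood--Paley background delegated to the cited references (\cite{BCD11} for $\R^d$ and \cite{Danchin05,DHWX20} for the periodic setting), so there is no in-paper proof to compare against. Your proposal is correct and is precisely the canonical argument those references use: Young's convolution inequality on the group $\mathbb{T}\times\R$ combined with the $2^{2q(1-\frac1c)}$ scaling of the rescaled multiplier kernels for the upper bounds, the multinomial identity $|\zeta|^{2k}=\sum_{|\beta|=k}\binom{k}{\beta}\zeta^{2\beta}$ with the homogeneity-$(-k)$ symbols $\zeta^{\beta}|\zeta|^{-2k}\widetilde\varphi$ for the reverse inequality, and Poisson summation to reduce the $x$-periodized kernels $h_q$ to their whole-space counterparts at cost of a constant uniform in $q\ge0$. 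You also correctly identify the two genuine subtleties that the paper's terse statement glosses over: the lower bound requires reading $\nabla^{\alpha}$ as the full gradient of order $k=|\alpha|$ (a single multi-index derivative cannot control $2^{qk}\|\Delta_q f\|_{L^b}$), and the restriction $q\ge0$ is exactly what keeps the annulus away from the origin so that $\zeta^{\beta}|\zeta|^{-2k}$ is a smooth compactly supported symbol after rescaling; no gaps.
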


For a general function $f:\Omega\rightarrow\R$, we define
\begin{equation}
\label{decomposition}
\bar{f}(y)=\int_{\mathbb{T}}f(x',y)~dx'\quad\text{and}\quad\widetilde{f}(x,y)=f(x,y)-\bar{f}(y).
\end{equation}
It is easy to check that $\bar{f}$ and $\widetilde{f}$ satisfy the following properties:
\begin{lemma}
\label{properties1}
Let $\bar{f}$ and $\widetilde{f}$ defined as \eqref{decomposition}, then
\begin{flalign}
&1.~\widetilde{f}~\text{is zero-average in}~x, \text{which is}~\int_{\mathbb{T}}\widetilde{f}(x',y)~dx'=0.& \label{zero-average}
\\
&2.~\widetilde{f}~\text{satisfies Poincar\'e inequality in}~x, \text{which is}~\|\widetilde{f}\|_{L^2_x}\leq C\|\pa_1\widetilde{f}\|_{L^2_x}.&\label{Poincare}
 \\
&3.~\widetilde{f}~\text{satisfies the interpolation}~\|\widetilde{f}\|_{L^\infty_x}\leq C\|\widetilde{f}\|_{L^2_x}^{\frac12}\|\pa_1\widetilde{f}\|_{L^2_x}^{\frac12}.&\label{interpolation}
\end{flalign}
\end{lemma}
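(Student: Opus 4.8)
The plan is to establish the three properties in the order stated, since the interpolation bound \eqref{interpolation} will rely on the Poincar\'e inequality \eqref{Poincare}. Throughout I would fix the variable $y$ and regard $\widetilde{f}(\cdot,y)$ as a function of $x$ on the torus $\mathbb{T}=[0,1]$; all three estimates are then one-dimensional statements in $x$ that hold pointwise (for a.e.) in $y$.

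For \eqref{zero-average} I would simply integrate the definition \eqref{decomposition} in $x$. Since $\bar{f}(y)$ is independent of $x$ and $|\mathbb{T}|=1$, one gets $\int_{\mathbb{T}}\widetilde{f}(x',y)\,dx'=\int_{\mathbb{T}}f(x',y)\,dx'-\bar{f}(y)\int_{\mathbb{T}}dx'=\bar{f}(y)-\bar{f}(y)=0$, which needs nothing beyond the definition.

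For the Poincar\'e inequality \eqref{Poincare} I would exploit that $\widetilde{f}(\cdot,y)$ has zero average in $x$. By continuity (the mean-value property) there is a point $x_0=x_0(y)\in\mathbb{T}$ with $\widetilde{f}(x_0,y)=0$. Writing $\widetilde{f}(x,y)=\int_{x_0}^{x}\pa_1\widetilde{f}(s,y)\,ds$ and applying Cauchy-Schwarz together with $|x-x_0|\le 1$ yields the pointwise bound $|\widetilde{f}(x,y)|\le\|\pa_1\widetilde{f}(\cdot,y)\|_{L^2_x}$; squaring and integrating in $x$ over $\mathbb{T}$ gives \eqref{Poincare}. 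Equivalently, one may expand $\widetilde{f}(\cdot,y)$ in its Fourier series in $x$, note the absence of the zero mode, and read the spectral gap off directly, producing the same estimate with an explicit constant.

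Finally, for the interpolation inequality \eqref{interpolation} I would again select a point $x_1=x_1(y)$ with $|\widetilde{f}(x_1,y)|^2\le\|\widetilde{f}(\cdot,y)\|_{L^2_x}^2$, which exists since the minimum of $|\widetilde{f}|^2$ over $\mathbb{T}$ cannot exceed its average. The fundamental theorem of calculus applied to $\widetilde{f}^2$ then gives $\widetilde{f}(x,y)^2=\widetilde{f}(x_1,y)^2+2\int_{x_1}^{x}\widetilde{f}\,\pa_1\widetilde{f}\,ds$, so that after Cauchy-Schwarz $\|\widetilde{f}\|_{L^\infty_x}^2\le\|\widetilde{f}\|_{L^2_x}^2+2\|\widetilde{f}\|_{L^2_x}\|\pa_1\widetilde{f}\|_{L^2_x}$. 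The only real subtlety is that this estimate is not yet scale invariant; I would remove the first term on the right by invoking the Poincar\'e inequality \eqref{Poincare} just proved, which bounds $\|\widetilde{f}\|_{L^2_x}^2\le C\|\widetilde{f}\|_{L^2_x}\|\pa_1\widetilde{f}\|_{L^2_x}$. Combining the two and taking square roots yields \eqref{interpolation}. Since every step is performed at fixed $y$, all three inequalities hold for a.e. $y$, which completes the proof.
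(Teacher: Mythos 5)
Your proof is correct. Note that the paper itself offers no proof of this lemma at all: it is introduced with ``It is easy to check that...'' and the proof environment that follows belongs to Lemma \ref{properties2}, so your argument supplies exactly the elementary verification the authors leave to the reader. Each step is sound: integrating the definition gives \eqref{zero-average}; the zero average produces (for functions continuous in $x$, which suffices here since the lemma is applied to $H^s$ solutions, $s\geq 2$) a point $x_0(y)$ where $\widetilde{f}$ vanishes, and the fundamental theorem of calculus plus Cauchy--Schwarz with $|x-x_0|\le 1$ gives \eqref{Poincare}, with the Fourier-series spectral-gap argument as a clean alternative; and the Agmon-type argument gives \eqref{interpolation}. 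One small simplification for the last step: since you already have the point $x_0(y)$ with $\widetilde{f}(x_0,y)=0$ from the Poincar\'e argument, you can apply the fundamental theorem of calculus to $\widetilde{f}^2$ starting at $x_0$ instead of at your point $x_1$; then
\begin{equation*}
\widetilde{f}(x,y)^2=2\int_{x_0}^{x}\widetilde{f}\,\pa_1\widetilde{f}\,ds\le 2\|\widetilde{f}\|_{L^2_x}\|\pa_1\widetilde{f}\|_{L^2_x},
\end{equation*}
and \eqref{interpolation} follows directly with $C=\sqrt{2}$, without invoking \eqref{Poincare} at all, so the scale-invariance issue you flag never arises.
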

For the divergence-free vector field $u$, we have the following special properties for $\bar{u}$ and $\widetilde{u}$.
\begin{lemma}
\label{properties2}
Let $u$ be a smooth divergence-free vector field, $\bar{u}$ and $\widetilde{u}$ are defined as \eqref{decomposition}, then we have the following properties:
\begin{flalign}
&1.~\pa_2\bar{u}^2=0~\text{and}~\bar{u}^2=0.&\label{u2=0}\\
&2.~\widetilde{u}~\text{also satisfies the divergence-free condition}~\pa_1\widetilde{u}^1+\pa_2\widetilde{u}^2=0. &\label{divergence-free-2}
\end{flalign}
\end{lemma}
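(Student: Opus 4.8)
The plan is to derive both properties directly from the divergence-free condition $\pa_1 u^1 + \pa_2 u^2 = 0$, combined with the periodicity of $u$ in $x$ and the $L^2$-integrability in $y$ inherited from $u\in H^s(\Omega)$. The two assertions are essentially consequences of integrating the incompressibility constraint over the torus, so the argument is short; I organize it so that the first property feeds directly into the second.

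First I would establish $\pa_2\bar{u}^2 = 0$. Differentiating the definition $\bar{u}^2(y) = \int_{\mathbb{T}} u^2(x',y)\,dx'$ under the integral sign gives $\pa_2\bar{u}^2(y) = \int_{\mathbb{T}} \pa_2 u^2(x',y)\,dx'$. Using the divergence-free condition to substitute $\pa_2 u^2 = -\pa_1 u^1$ turns the integrand into a perfect $x$-derivative, and integrating over $\mathbb{T} = [0,1]$ produces the boundary difference $-\bigl(u^1(1,y) - u^1(0,y)\bigr)$. Periodicity of $u^1$ in $x$ forces this to vanish, so $\pa_2\bar{u}^2 \equiv 0$.

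Next, $\pa_2\bar{u}^2 = 0$ says that $\bar{u}^2$, which is already a function of $y$ alone, is in fact constant in $y$. Since $\|\bar{u}^2\|_{L^2_y} \le \|u^2\|_{L^2(\Omega)} < \infty$ by Cauchy--Schwarz over the unit-length torus (here the integration variable $x'$ ranges over a set of measure one), the only constant belonging to $L^2(\R)$ is zero. Hence $\bar{u}^2 = 0$, which is the first property.

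For the second property I would simply observe that both $\bar{u}^1$ and $\bar{u}^2$ depend only on $y$, so $\pa_1\bar{u}^1 = 0$, while $\pa_2\bar{u}^2 = 0$ was just shown. Therefore
\[
\pa_1\widetilde{u}^1 + \pa_2\widetilde{u}^2 = \pa_1\bigl(u^1 - \bar{u}^1\bigr) + \pa_2\bigl(u^2 - \bar{u}^2\bigr) = \pa_1 u^1 + \pa_2 u^2 = 0.
\]
The only step that is not purely formal is the passage from ``constant in $y$'' to ``identically zero,'' which genuinely relies on the integrability provided by the noncompact factor $\R$ in $\Omega = \mathbb{T}\times\R$; on a fully periodic domain one would instead have to impose a zero-mean normalization on $u^2$. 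I therefore expect this integrability argument to be the only point deserving care, the rest being an application of incompressibility and periodicity.
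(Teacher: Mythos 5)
Your proof is correct and follows essentially the same route as the paper: integrate the incompressibility constraint over $\mathbb{T}$, use periodicity of $u^1$ in $x$ to conclude $\pa_2\bar{u}^2=0$, and then obtain the second property by the purely formal cancellation $\pa_1\bar{u}^1=\pa_2\bar{u}^2=0$. The only (harmless) difference is how the constant $\bar{u}^2$ is shown to vanish: the paper invokes $\lim_{y\to\pm\infty}\bar{u}^2(y)=0$, whereas you use that a nonzero constant cannot lie in $L^2(\R)$, which is if anything slightly more self-contained given $u\in H^s(\Omega)$.
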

\begin{proof}
By the divergence-free condition 
\begin{equation*}
\pa_1u^1+\pa_2u^2=0,
\end{equation*}
then integrating in $x$ over $\mathbb{T}$,
\begin{equation*}
\int_{\mathbb{T}}\pa_1u^1(x',y)~dx'+\int_{\mathbb{T}}\pa_2u^2(x',y)~dx'=0.
\end{equation*}
Thus we obtain 
\begin{equation*}
\pa_2\bar{u}^2=\pa_2\int_{\mathbb{T}}u^2(x',y)~dx'=-\int_{\mathbb{T}}\pa_1u^1(x',y)~dx'=0.
\end{equation*}
Because of $\lim_{y\rightarrow\pm\infty}\bar{u}^2(y)=0$, then we deduce $\bar{u}^2=0$, which verifies (2.5).\\
For (2.6), according to the definition of $\widetilde{u}$,
\begin{equation*}
\pa_1\widetilde{u}^1+\pa_2\widetilde{u}^2=\pa_1{u}^1-\pa_1\bar{u}^1+\pa_2{u}^2-\pa_2\bar{u}^2.
\end{equation*}
Then combining with the result (2.5), we can obtain
\begin{equation*}
\pa_1\widetilde{u}^1+\pa_2\widetilde{u}^2=\pa_1{u}^1+\pa_2{u}^2=0,
\end{equation*}
which completes the proof of this lemma.
\end{proof}

\end{section}

\begin{section}{Global $L^2$ bound for $(u,b)$ and $(\pa_2u,  \pa_2 b)$}

In this section, we will give the global a priori estimates for $(u,b)$ and $(\pa_2u,\pa_2b)$ in $L^2$ space. The main result can be stated as the following proposition.
\begin{prop}
\label{prop-2.1}
Assume that $(u_0,b_0)\in L^2$, $(\pa_2u_0, \pa_2b_0)\in L^2$ and satisfies the smallness condition 
\begin{equation}
\|(u_0,b_0)\|_{L^2}^2+\|(\pa_2u_0,\pa_2b_0)\|_{L^2}^2\leq \delta,
\end{equation}
for some $\delta>0$. Let $(u,b)$ be the corresponding solution of \eqref{system-MHD}, then for all $t>0$, we have 
\begin{equation*}
\|(u,b)(t)\|_{L^2}^2+\|\pa_2(u,b)(t)\|_{L^2}^2
+\overline{C}\int_0^t\|\pa_1(u,b)(\tau)\|_{L^2}^2 d\tau+\overline{C}\int_0^t\|\pa_1\pa_2(u,b)(\tau)\|_{L^2}^2 d\tau \leq C_0\delta,
\end{equation*}
where $\overline{C}=\min\{\nu,\eta\}$ and $C_0>0$ be a constant independent of time $t$. 	 
\end{prop}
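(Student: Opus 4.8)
The plan is to run a two-tier energy estimate (an $L^2$ balance together with a $\pa_2$-level balance) and to close it by a continuity argument resting on the smallness of $\delta$. Throughout I work with a smooth solution on a maximal time interval, so all the integrations by parts below are justified. Write
\[
E(t)=\|(u,b)(t)\|_{L^2}^2+\|\pa_2(u,b)(t)\|_{L^2}^2,\qquad D(t)=\|\pa_1(u,b)(t)\|_{L^2}^2+\|\pa_1\pa_2(u,b)(t)\|_{L^2}^2,
\]
so that the claimed bound reads $E(t)+\overline{C}\int_0^t D\,d\tau\le C_0\delta$ with $\overline{C}=\min\{\nu,\eta\}$.

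First I would establish the plain $L^2$ balance. Testing the first equation of \eqref{system-MHD} with $u$ and the second with $b$ and adding, the convection $u\cdot\nabla u$ and the pressure gradient drop out by $\nabla\cdot u=0$, while the two coupling terms $\int (b\cdot\nabla b)\cdot u$ and $\int(b\cdot\nabla u)\cdot b$ cancel after integration by parts using $\nabla\cdot b=0$. This yields the exact identity
\[
\tfrac12\tfrac{d}{dt}\big(\|u\|_{L^2}^2+\|b\|_{L^2}^2\big)+\nu\|\pa_1u\|_{L^2}^2+\eta\|\pa_1b\|_{L^2}^2=0 .
\]
Next I apply $\pa_2$ to both equations and test with $\pa_2u$ and $\pa_2b$. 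The terms $u\cdot\nabla\pa_2u$ and $u\cdot\nabla\pa_2b$ vanish by incompressibility, the pressure contribution vanishes, and the top-order magnetic terms $\int(b\cdot\nabla\pa_2b)\cdot\pa_2u$ and $\int(b\cdot\nabla\pa_2u)\cdot\pa_2b$ again cancel. What survives are four cubic commutator terms in which one derivative has fallen on a coefficient, schematically
\[
\int(\pa_2u\cdot\nabla)u\cdot\pa_2u,\quad \int(\pa_2u\cdot\nabla)b\cdot\pa_2b,\quad \int(\pa_2b\cdot\nabla)b\cdot\pa_2u,\quad \int(\pa_2b\cdot\nabla)u\cdot\pa_2b,
\]
so that
\[
\tfrac12\tfrac{d}{dt}\big(\|\pa_2u\|_{L^2}^2+\|\pa_2b\|_{L^2}^2\big)+\nu\|\pa_1\pa_2u\|_{L^2}^2+\eta\|\pa_1\pa_2b\|_{L^2}^2=\text{(those four terms)} .
\]

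The crux, and the main obstacle, is to bound these four cubic terms by $C\sqrt{E}\,D$, using only horizontal derivatives as ``loss'' since no $\pa_2^2$ dissipation is available. For the $\pa_2$-component of the divergence I would use the incompressibility relations $\pa_2u^2=-\pa_1u^1$ and $\pa_2b^2=-\pa_1b^1$ from Lemma \ref{properties2}, which guarantees that every resulting term carries at least one factor bearing a $\pa_1$ derivative. I would then split each field into its $x$-average and remainder via \eqref{decomposition}; the purely-average contributions either vanish after integrating in $x$ or are controlled thanks to $\bar u^2=\bar b^2=0$ from \eqref{u2=0}. Each remaining triple product is then estimated by the anisotropic product lemma (Lemma \ref{three-linear term}): one factor is placed in $L^\infty_yL^2_x$ and bounded by $\|\pa_1\cdot\|_{L^2}^{1/2}\|\pa_1\pa_2\cdot\|_{L^2}^{1/2}\lesssim D^{1/2}$, a second factor is placed in $L^2_yL^\infty_x$ and handled through the $x$-interpolation \eqref{interpolation}, and on a remainder (tilde) factor the Poincar\'e inequality \eqref{Poincare} upgrades $\|\pa_2\widetilde{\cdot}\|_{L^2}\lesssim\|\pa_1\pa_2\widetilde{\cdot}\|_{L^2}\lesssim D^{1/2}$. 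Collecting the powers, each term is $\le C\sqrt{E}\,D$, leaving exactly one factor of energy and two factors of dissipation.

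Finally I would add the two identities to obtain the differential inequality $\tfrac{d}{dt}E+\overline{C}\,D\le C\sqrt{E}\,D$. Running a continuity argument, on the maximal interval where $\sqrt{E(t)}\le\overline{C}/(2C)$ the right-hand side is absorbed, giving $\tfrac{d}{dt}E+\tfrac{\overline{C}}{2}D\le0$, whence $E(t)\le E(0)\le\delta$ and $\tfrac{\overline{C}}{2}\int_0^tD\,d\tau\le\delta$. Choosing $\delta$ small enough that $\sqrt{\delta}<\overline{C}/(2C)$ keeps the bootstrap assumption strictly valid, so that this interval must be all of $\R_+$; absorbing the various constants into a single $C_0$ then yields the stated estimate.
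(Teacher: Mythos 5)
Your proposal is correct, and at the level of the energy structure it coincides with the paper's proof: the same two balances (for $(u,b)$ and for $(\pa_2u,\pa_2b)$), the same cancellations, the same use of the divergence-free relations $\pa_2u^2=-\pa_1u^1$, $\pa_2b^2=-\pa_1b^1$, the $x$-average/remainder splitting \eqref{decomposition}, and the anisotropic H\"older--interpolation--Poincar\'e recipe for the four cubic terms. Where you genuinely differ is in the closure. The paper applies Young's inequality inside each trilinear estimate so as to arrive at
\begin{equation*}
\frac{d}{dt}\|\pa_2(u,b)\|_{L^2}^2+\overline{C}\|\pa_1\pa_2(u,b)\|_{L^2}^2
\le C\bigl(\|\pa_2(u,b)\|_{L^2}^2+\|\pa_2(u,b)\|_{L^2}^4\bigr)\|\pa_1(u,b)\|_{L^2}^2,
\end{equation*}
i.e.\ a right-hand side involving only the lower-order dissipation $\|\pa_1(u,b)\|_{L^2}^2$, which is time-integrable by the $L^2$ balance; integrating in time then gives the bootstrap inequality $F(t)\le F(0)+CF(t)^2+CF(t)^3$ for the time-integrated functional $F$. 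You instead keep the pre-Young form $C\sqrt{E}\,D$ --- which is legitimate, since each of the paper's intermediate bounds (e.g.\ $\sqrt{E}\,\|\pa_1(u,b)\|_{L^2}^{1/2}\|\pa_1\pa_2(u,b)\|_{L^2}^{3/2}$) is indeed $\le C\sqrt{E}\,D$ by Young's inequality applied to the two dissipation factors --- and run a pointwise-in-time continuity argument, absorbing the nonlinearity into the full dissipation on the set where $C\sqrt{E}\le\overline{C}/2$. Both mechanisms close under the smallness of $\delta$; yours is slightly cleaner in that it avoids the integrated functional and the quadratic/cubic powers of $F$, while the paper's version makes explicit that only the integrability of $\int_0^t\|\pa_1(u,b)\|_{L^2}^2\,d\tau$ is being exploited. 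One small correction: the tool you cite for the trilinear bounds is not Lemma \ref{three-linear term} --- that is the Littlewood--Paley commutator estimate used only for the $H^s$ theory of Section 4 --- in this proposition the paper argues directly from the anisotropic inequalities \eqref{Poincare}--\eqref{interpolation} and Lemma \ref{properties2}, which is exactly the recipe you then describe, so only the citation needs fixing.
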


\end{section}
\begin{proof}
Taking $L^2$ inner product of \eqref{system-MHD} with $(u,b)$, one can obtain:
\begin{equation}
\frac{1}{2} \frac{d}{d t}(\|u(t)\|_{L^{2}}^{2}+\| b(t)\|_{L^2}^2)+\nu\|\partial_{1} u\|_{2}^{2}+\eta\|\partial_{1} b\|_{2}^{2}=0.
\end{equation}
Then integrating in time, we get the $L^2$ estimate for $(u,b)$ that
\begin{equation}
\label{L2-ub}
\|u(t)\|_{2}^{2}+\|b(t)\|_{2}^{2}+2\nu\int_{0}^{t}\|\partial_1 u(\tau)\|_{L^{2}}^{2} d \tau+2\eta\int_{0}^{t}\|\partial_1 b(\tau)\|_{L^2}^{2} d{\tau} \leq\|u_{0}, b_{0}\|_{2}^{2}.
\end{equation}
Next we derive the $L^2$ estimate for $\pa_2(u,b)$. Applying $\pa_2$ to system \eqref{system-MHD},
\begin{equation}
\label{pa_2-MHD}
\left\{\begin{array}{cc}\begin{split}
&\partial_t \partial_{2} u+u \cdot \nabla \partial_{2} u+\pa_2 u \cdot \nabla u-\nu\partial_{1}^{2} \partial_{2} u+\nabla \partial_{2} p=b \cdot \nabla \pa_{2} b+\partial_{2} b \cdot \nabla b, \\
&\partial_{t} \partial_{2} b+u \cdot \nabla\partial_2 b+\partial_{2}u \cdot \nabla b-\eta\partial_{1}^{2} \partial_{2} b=b \cdot \nabla\pa_{2} u+\pa_2 b \cdot \nabla u.\end{split}
\end{array}\right.
\end{equation}
Multiplying \eqref{pa_2-MHD} by $(\pa_2u,\pa_2b)$ and integrating over $\Omega$, because of the divergence-free condition of $u$ and $b$, we deduce
\begin{equation}
\label{L2-pa_2ub-1}
\begin{split}
&\quad\frac{1}{2} \frac{d}{d t}\left(\|\pa_2u(t)\|_{L^{2}}^{2}+\|\pa_2b(t)\|_{L^{2}}^{2}\right)+\nu\|\partial_{1}\pa_2 u\|_{2}^{2}+\eta\|\partial_{1}\pa_2 b\|_{2}^{2}\\
&=-\int_{\Omega}\pa_2u\cdot\nabla u\cdot\pa_2 u~dxdy+\int_{\Omega}\pa_2b\cdot\nabla b\cdot\pa_2 u~dxdy\\
&\quad-\int_{\Omega}\pa_2u\cdot\nabla b\cdot\pa_2 b~dxdy+\int_{\Omega}\pa_2b\cdot\nabla u\cdot\pa_2 b~dxdy\\
&\triangleq A_1+A_2+A_3+A_4.
\end{split}
\end{equation}
For $A_{1},$ we can write it as
\begin{align*}
A_{1} &=-\int_{\Omega} \partial_{2} u \cdot \nabla u \cdot \partial_{2} u d x d y \\
&=-\int_{\Omega} \partial_{2} u^{1} \partial_{1} u^{1} \partial_{2} u^{1} d x d y-\int_{\Omega} \partial_{2} u^{1} \partial_{1} u^{2} \partial_{2} u^{2} d x d y \\
&\quad-\int_{\Omega} \partial_{2} u^{2} \pa_{2} u^{1} \partial_{2} u^{1} d x d y-\int_{\Omega} \partial_{2} u^{2} \partial_{2} u^{2} \partial_{2} u^{2} d x d y \\
&=-\int_{\Omega} \partial_{2} u^{1} \partial_{1} u^{2} \partial_{2} u^{2} d x d y-\int_{\Omega} \partial_{2} u^{2} \partial_2 u^{2} \partial_2 u^{2} d x d y\\
&\triangleq A_{11}+A_{12}.
\end{align*}
Using the decomposition \eqref{decomposition} and according to the property \eqref{u2=0}, $A_{11}$ can be written as
$$
\begin{aligned}
A_{11} &=-\int_{\Omega} \pa_{2} u^{1} \partial_{1} u^{2} \partial_{2} u^{2} d x d y \\
&=-\int_{\Omega} \partial_{2} \bar{u}^{1} \partial_{1} u^{2} \partial_{2} u^{2} d x d y-\int_{\Omega} \partial_{2} \widetilde{u}^{1} \partial_{1} u^{2} \partial_{2} u^{2} d x d y \\
&=-\int_{\Omega} \partial_{2} \bar{u}^{1} \partial_{1} \widetilde{u}^{2} \partial_{2} \widetilde{u}^{2} d x d y-\int_{\Omega} \partial_{2} \widetilde{u}^{1} \partial_{1} u^{2} \partial_{2} u^{2}  d x d y \\
& \triangleq A_{111}+A_{112}.
\end{aligned}.
$$
For $A_{111}$, by anisotropic
H\"older inequality, interpolation \eqref{interpolation}, Poincar\'e inequality \eqref{Poincare} and Young's inequality,
\begin{equation}
\begin{aligned}
A_{111} & \leq\left\|\partial_{2} \bar{u}^{1}\right\|_{L^2_y}\left\|\partial_{1} \widetilde{u}^{2}\right\|_{L^\infty_yL^{2}_x}\left\|\partial_{2}\widetilde{u} ^{2}\right\|_{L^{2}_x L_{y}^{2}} \\
& \leq\left\|\partial_{2} \bar{u}^{1}\right\|_{L^{2}_y}\left\|\partial_{1} \widetilde{u}^{2}\right\|_{L^2}^{\frac{1}{2}}\left\|\partial_{1} \partial_{2} \widetilde{u}^{2}\right\|_{L^2}^{\frac{1}{2}}\left\|\partial_{2} \widetilde{u}^{2}\right\|_{L^2} \\
& \leq C\left\|\partial_{2} \bar{u}^{1}\right\|_{L^{2}_y}\left\|\partial_{1} \widetilde{u}\right\|_{L^2}\left\|\partial_{1} \partial_{2} \widetilde{u}^{2}\right\|_{L^2}\\
& \leq C\left\|\partial_{2} u\right\|_{L^{2}}^{2}\left\|\partial_{1} u\right\|_{L^2}^{2}+\varepsilon\left\|\partial_{1} \partial_{2} u\right\|_{L^{2}}^{2}.
\end{aligned}
\end{equation}
Similarly, $A_{112}$ can be bounded by 
\begin{equation} 
\begin{split}
A_{112} &=-\int_{\Omega} \partial_{2} \widetilde{u}^{1} \partial_{1} u^{2} \partial_{2} u^{2} d x d y \\ & \leq\left\|\partial_{2} \widetilde{u}^{1 }\right\|_{L^\infty_xL^2_y}\left\|\partial_{1} u^{2}\right\|_{L^\infty_yL^2_x}\left\|\partial_{2} u^{2}\right\|_{L^2} \\ & \leq C\left\|\partial_{2} \widetilde{u}^{1}\right\|_{L^2}^{\frac{1}{2}}\left\|\partial_{1} \partial_{2} \widetilde{u}^{1}\right\|_{L^2}^{\frac{1}{2}}\left\|\partial_{1} u^{2}\right\|_{L^2}^{\frac{1}{2}}\left\|\partial_{1} \partial_{2} u^{2}\right\|_{L^2}^{\frac{1}{2}}\left\|\partial_{2} u^{2}\right\|_{L^2} \\ & \leq C\left\|\partial_{2} u\right\|_{L^2}^{2}\left\|\partial_{1} u\right\|_{L^2}^{2}+\varepsilon\left\|\partial_{1} \partial_{2} u\right\|_{L^2}^{2}.
\end{split} 
\end{equation}
Then because of the divergence-free condition of $u$, 
\begin{equation*}
A_{12}=-\int_{\Omega} \partial_{2} u^{2} \partial_2 u^{2} \partial_2 u^{2} d x d y=
\int_{\Omega} \partial_{2} u^{2} \partial_1 u^{1} \partial_2 u^{2} d x d y.
\end{equation*}
Making use of the same way as $A_{11}$, we can bound $A_{12}$ by
\begin{equation*} 
\begin{split}
A_{12}\leq C\left\|\partial_{2} u\right\|_{L^2}^{2}\left\|\partial_{1} u\right\|_{L^2}^{2}+\varepsilon\left\|\partial_{1} \partial_{2} u\right\|_{L^2}^{2}.
\end{split} 
\end{equation*}
Thus, we get the estimate of $A_{1}$,
\begin{equation} 
\label{A-1}
\begin{split}
A_{1}  \leq C\left\|\partial_{2} u\right\|_{L^2}^{2}\left\|\partial_{1} u\right\|_{L^2}^{2}+\varepsilon\left\|\partial_{1} \partial_{2} u\right\|_{L^2}^{2}.
\end{split} 
\end{equation}
Then we estimate $A_2$, because of the divergence-free condition of $b$,
\begin{align}
A_{2}&= \int_{\Omega} \pa_2 b \cdot \nabla b \cdot\partial_2 u  d x d y \notag\\
&= \int_{\Omega} \partial_{2} b^{1} \partial_{1} b^{1} \partial_{2} u^{1} d x d y+\int_{\Omega} \partial_{2} b^{1} \partial_{1} b^{2} \partial_{2} u^{2} d x d y \notag\\
&\quad+\int_{\Omega} \partial_{2} b^{2} \pa_2b^1 \partial_{2} u^{1} d {x} d y+\int_{\Omega} \partial_{2} b^{2} \partial_{2} b^{2} \partial_{2} u^{2} d x d y \notag\\
&=\int_{\Omega} \partial_{2} b^{1} \partial_{1} b^{2} \partial_{2} u^{2} d x d y+\int_{\Omega} \partial_{2} b^{2} \partial_{2} b^{2} \partial_{2} u^{2} d x d y \notag\\
&=A_{21}+A_{22}.
\end{align}
Similar as $A_{11}$, we decompose $A_{21}$ as 
$$
\begin{aligned}
A_{21} &=\int_{\Omega} \pa_{2} b^{1} \partial_{1} b^{2} \partial_{2} u^{2} d x d y \\
&=\int_{\Omega} \partial_{2} \bar{b}^{1} \partial_{1} \widetilde{b}^{2} \partial_{2} \widetilde{u}^{2} d x d y+\int_{\Omega} \partial_{2} \widetilde{b}^{1} \partial_{1} b^{2} \partial_{2} u^{2}  d x d y \\
& \triangleq A_{211}+A_{212}.
\end{aligned}
$$
By anisotropic
H\"older inequality, interpolation \eqref{interpolation}, Poincar\'e inequality \eqref{Poincare} and Young's inequality,
\begin{equation}
\begin{aligned}
A_{211} & \leq\left\|\partial_{2} \bar{b}^{1}\right\|_{L^2_y}\left\|\partial_{1} \widetilde{b}^{2}\right\|_{L^\infty_yL^{2}_x}\left\|\partial_{2}\widetilde{u} ^{2}\right\|_{L^{2}_x L_{y}^{2}} \\
& \leq\left\|\partial_{2} \bar{b}^{1}\right\|_{L^{2}_y}\left\|\partial_{1} \widetilde{b}^{2}\right\|_{L^2}^{\frac{1}{2}}\left\|\partial_{1} \partial_{2} \widetilde{b}^{2}\right\|_{L^2}^{\frac{1}{2}}\left\|\partial_{2} \widetilde{u}^{2}\right\|_{L^2} \\
& \leq C\left\|\partial_{2} \bar{b}^{1}\right\|_{L^{2}_y} \left\|\partial_{1}  (u,b)\right\|_{L^2}\left\|\partial_{1} \partial_{2} (u,b)\right\|_{L^2} \\
& \leq C\left\|\partial_{2} b\right\|_{L^{2}}^{2}\left\|\partial_{1} (u,b)\right\|_{L^2}^{2}+\varepsilon\left\|\partial_{1} \partial_{2} (u,b)\right\|_{L^{2}}^{2}.
\end{aligned}
\end{equation}
Along the same way, we can bound $A_{112}$ by

\begin{align}
A_{212} &=\int_{\Omega} \partial_{2} \widetilde{b}^{1} \partial_{1} b^{2} \partial_{2} u^{2} d x d y \notag\\ & \leq\left\|\partial_{2} \widetilde{b}^{1 }\right\|_{L^\infty_xL^2_y}\left\|\partial_{1} b^{2}\right\|_{L^\infty_yL^2_x}\left\|\partial_{2} u^{2}\right\|_{L^2} \notag\\ & \leq C\left\|\partial_{2} \widetilde{b}^{1}\right\|_{L^2}^{\frac{1}{2}}\left\|\partial_{1} \partial_{2} \widetilde{b}^{1}\right\|_{L^2}^{\frac{1}{2}}\left\|\partial_{1} b^{2}\right\|_{L^2}^{\frac{1}{2}}\left\|\partial_{1} \partial_{2} b^{2}\right\|_{L^2}^{\frac{1}{2}}\left\|\partial_{2} u^{2}\right\|_{L^2} \notag\\ & \leq C\left\|\partial_{2} (u,b)\right\|_{L^2}^{2}\left\|\partial_{1} (u,b)\right\|_{L^2}^{2}+\varepsilon\left\|\partial_{1} \partial_{2} b\right\|_{L^2}^{2}.
\end{align} 
For $A_{22}$,
according to the divergence-free condition of $b$ and similar as $A_{21}$, we can bound it by
\begin{equation*} 
\begin{split}
A_{22}  \leq C\left\|\partial_{2} (u,b)\right\|_{L^2}^{2}\left\|\partial_{1} (u,b)\right\|_{L^2}^{2}+\varepsilon\left\|\partial_{1} \partial_{2} (u,b)\right\|_{L^2}^{2}.
\end{split} 
\end{equation*}
Thus we conclude that, the estimate for $A_{2}$ is  
\begin{equation}
\label{A-2} 
\begin{split}
A_{2}  \leq C\left\|\partial_{2} (u,b)\right\|_{L^2}^{2}\left\|\partial_{1} (u,b)\right\|_{L^2}^{2}+\varepsilon\left\|\partial_{1} \partial_{2} (u,b)\right\|_{L^2}^{2}.
\end{split} 
\end{equation}
Next we estimate $A_3$, 
firstly we decompose it as
$$
\begin{aligned}
A_{3} &=-\int_{\Omega} \partial_{2} u \cdot \nabla b \cdot \partial_{2} b d x d y \\
&=-\int_{\Omega} \partial_{2} u^{1} \partial_{1} b^{1} \partial_{2} b^{1} d x d y-\int_{\Omega} \partial_{2} u^{1} \partial_{1} b^{2} \partial_{2} b^{2} d x d y \\
&\quad-\int_{\Omega} \partial_{2} u^{2} \pa_{2} b^{1} \partial_{2} b^{1} d x d y-\int_{\Omega} \partial_{2} u^{2} \partial_{2} b^{2} \partial_{2} b^{2} d x d y \\
&\triangleq A_{31}+A_{32}+A_{33}+A_{34}.
\end{aligned}
$$
Similar as $A_2$, the terms $A_{32}$ and $A_{34}$ can be bounded by
\begin{align*}
A_{32},A_{34}  \leq C\left\|\partial_{2} (u,b)\right\|_{L^2}^{2}\left\|\partial_{1} (u,b)\right\|_{L^2}^{2}+\varepsilon\left\|\partial_{1} \partial_{2} (u,b)\right\|_{L^2}^{2}.
\end{align*} 
For the term $A_{31}$, we can decompose it as 
\begin{align*}
A_{31} &=-\int_{\Omega} \pa_{2} u^{1} \partial_{1} b^{1} \partial_{2} b^{1} d x d y \\
&=-\int_{\Omega} \partial_{2} {u}^{1} \partial_{1} b^{1} \partial_{2} \bar{b}^{1} d x d y-\int_{\Omega} \partial_{2} {u}^{1} \partial_{1} b^{1} \partial_{2} \widetilde{b}^{1} d x d y \\
&=-\int_{\Omega} \partial_{2} \bar{u}^{1} \partial_{1} \widetilde{b}^{1} \partial_{2} \bar{b}^{1} d x d y-\int_{\Omega} \partial_{2} \widetilde{u}^{1} \partial_{1} \widetilde{b}^{1} \partial_{2} \bar{b}^{1} d x d y\\
&\quad-\int_{\Omega} \partial_{2} {u}^{1} \partial_{1} b^{1} \partial_{2} \widetilde{b}^{1}  d x d y \\
& \triangleq A_{311}+A_{312}+A_{313}.
\end{align*}
According to the decomposition \eqref{decomposition}, it is obvious that $A_{311}=0$. For $A_{312}$, we can bound it by anisotropic H\"older inequality, interpolation \eqref{interpolation} and Young's inequality,
\begin{align*}
A_{312} & \leq\left\|\partial_{2} \bar{b}^{1}\right\|_{L^2_y}\left\|\partial_{2} \widetilde{u}^{1}\right\|_{L^2_xL^{2}_y}\left\|\partial_{1}\widetilde{b} ^{1}\right\|_{L^{\infty}_y L_{x}^{2}} \\
& \leq\left\|\partial_{2} \bar{b}^{1}\right\|_{L^{2}_y}\left\|\partial_{2} \widetilde{u}^{1}\right\|_{L^2}\left\|\partial_{1} \widetilde{b}^{1}\right\|_{L^2}^{\frac{1}{2}}\left\|\partial_{1} \partial_{2} \widetilde{b}^{1}\right\|_{2}^{\frac{1}{2}} \\
& \leq C\left\|\partial_{2} {b}^{1}\right\|_{L^{2}} \left\|\partial_{1}  (u,b)\right\|_{L^2}^{\frac12}\left\|\partial_{1} \partial_{2} (u,b)\right\|_{L^2}^{\frac32} \\
& \leq C\left\|\partial_{2} b\right\|_{L^{2}}^{4}\left\|\partial_{1} (u,b)\right\|_{L^2}^{2}+\varepsilon\left\|\partial_{1} \partial_{2} (u,b)\right\|_{L^{2}}^{2},
\end{align*}
where we have used Poincar\'e inequality \eqref{Poincare} in the third step.\\
Similarly,
\begin{equation} 
\begin{split}
A_{313} &=\int_{\Omega} \partial_{2} u^{1} \partial_{1} b^{1} \partial_{2} \widetilde{b}^{1} d x d y \\ & \leq C\left\|\partial_{2} u^{1}\right\|_{L^2}\left\|\partial_{1} {b}^{1 }\right\|_{L^\infty_yL^2_x}\left\|\partial_{2} \widetilde{b}^{1}\right\|_{L^\infty_xL^2_y} \\ & \leq C\left\|\partial_{2} u^{1}\right\|_{L^2}\left\|\partial_{1} {b}^{1}\right\|_{L^2}^{\frac{1}{2}}\left\|\partial_{1} \partial_{2} {b}^{1}\right\|_{L^2}^{\frac{1}{2}}\left\|\partial_{2} \widetilde{b}^{1}\right\|_{L^2}^{\frac{1}{2}}\left\|\partial_{1} \partial_{2} \widetilde{b}^{1}\right\|_{L^2}^{\frac{1}{2}} \\ 
& \leq C\left\|\partial_{2} {u}^{1}\right\|_{L^{2}} \left\|\partial_{1}  (u,b)\right\|_{L^2}^{\frac12}\left\|\partial_{1} \partial_{2} (u,b)\right\|_{L^2}^{\frac32} \\
& \leq C\left\|\partial_{2} u\right\|_{L^2}^{4}\left\|\partial_{1}  (u,b)\right\|_{L^2}^{2}+\varepsilon\left\|\partial_{1} \partial_{2} u\right\|_{L^2}^{2}.
\end{split} 
\end{equation}
By the divergence-free condition of $u$, 
\begin{equation*}
A_{33}=-\int_{\Omega}\pa_2u^2\pa_2b^1\pa_2b^1 d x d y=\int_{\Omega}\pa_1u^1\pa_2b^1\pa_2b^1 d x d y.
\end{equation*}
It follows by the same method as in estimate of $A_{31}$, we can obtain
\begin{equation*} 
\begin{split}
A_{33} \leq C(\left\|\partial_{2} (u,b)\right\|_{L^2}^{2}+\left\|\partial_{2} (u,b)\right\|_{L^2}^{4}) \left\|\partial_{1} (u,b)\right\|_{L^2}^{2}+\varepsilon\left\|\partial_{1} \partial_{2} (u,b)\right\|_{L^2}^{2}.
\end{split} 
\end{equation*}
Summing the estimates of $A_{31}-A_{34}$, we get the bound for $A_{3}$ that
\begin{equation}
\label{A-3} 
\begin{split}
A_{3} \leq C(\left\|\partial_{2} (u,b)\right\|_{L^2}^{2}+\left\|\partial_{2} (u,b)\right\|_{L^2}^{4}) \left\|\partial_{1} (u,b)\right\|_{L^2}^{2}+\varepsilon\left\|\partial_{1} \partial_{2} (u,b)\right\|_{L^2}^{2},
\end{split} 
\end{equation}
and the same conclusion can be drawn for $A_{4}$.\\
Adding the estimates \eqref{A-1}, \eqref{A-2} and \eqref{A-3} into \eqref{L2-pa_2ub},  choosing $\varepsilon=\frac{\min\{\nu,\eta\}}{16}$, one can deduce
\begin{equation}
\label{L2-pa_2ub}
\begin{split}
& \frac{d}{d t}\|\pa_2(u,b)(t)\|_{L^{2}}^{2}+\overline{C}\|\partial_{1}\pa_2 (u,b)\|_{2}^{2}\leq C(\left\|\partial_{2} (u,b)\right\|_{L^2}^{2}+\left\|\partial_{2} (u,b)\right\|_{L^2}^{4}) \left\|\partial_{1} (u,b)\right\|_{L^2}^{2},
\end{split}
\end{equation}
where $\overline{C}=\min\{\nu,\eta\}$. 
Denoting
\begin{equation}
\begin{split}
F(t)&=\sup_{0\leq\tau\leq t}\|(u,b)(t)\|_{L^2}^2+\sup_{0\leq\tau\leq t}\|\pa_2(u,b)(t)\|_{L^2}^2+
\overline{C}\int_0^t\|\pa_1(u,b)(\tau)\|_{L^2}^2 d\tau\\
&\quad+\overline{C}\int_0^t\|\pa_1\pa_2(u,b)(\tau)\|_{L^2}^2 d\tau.
\end{split}
\end{equation}
Integrating \eqref{L2-pa_2ub} from $0$ to $t$ with respect to time variable, combining with the inequality \eqref{L2-ub}, we conclude that
\begin{equation}
F(t)\leq F(0)+CF(t)^2+CF(t)^3.
\end{equation}
A bootstrapping argument implies that, there is $\delta>0$, such that, if $F(0)<\delta$, then 
$$F(t) \leq C_0 \delta$$
for a pure constant $C_0$ and for all $t > 0$. This completes the proof of this proposition.
\end{proof}

\end{section}

\begin{section}{Regularity estimate for $(u,b)$}
The goal of this section is to present the global regularity estimate for the solution $(u,b)$ to system \eqref{system-MHD}. The main result can be stated as follows.
\begin{prop}
\label{prop-Hs}
Assume the initial data $(u_0,b_0)\in H^s (s\geq 1)$ and satisfies $\nabla\cdot u_0=\nabla \cdot b_0=0$. Then there exist some sufficiently small positive constant $\delta$, such that if $(u_0,b_0)$ satisfies the smallness condition 
\eqref{smallness condition},
then the corresponding solution $(u,b)$ to system \eqref{system-MHD} satisfies 
\begin{equation*}
\begin{split}
u\in L^\infty(\R_+,H^s(\Omega)),\quad
\pa_1u\in L^2(\R_+,H^s(\Omega)),\\
b\in L^\infty(\R_+,H^s(\Omega)),\quad
\pa_1b\in L^2(\R_+,H^s(\Omega)).
\end{split}
\end{equation*}
\end{prop}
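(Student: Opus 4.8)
The plan is to run a frequency-localized energy estimate in the spirit of Littlewood--Paley theory, taking Proposition \ref{prop-2.1} as the fundamental a priori input. First I would apply the operator $\Delta_q$ to both equations of \eqref{system-MHD}, take the $L^2$ inner product of the resulting equations with $\Delta_q u$ and $\Delta_q b$ respectively, and add them. Two structural cancellations make this tractable: the transport terms vanish because $\int_\Omega u\cdot\nabla\Delta_q f\cdot\Delta_q f\,dxdy=0$ for divergence-free $u$, and the Lorentz force/stretching coupling cancels via the MHD identity $\int_\Omega b\cdot\nabla\Delta_q b\cdot\Delta_q u\,dxdy+\int_\Omega b\cdot\nabla\Delta_q u\cdot\Delta_q b\,dxdy=\int_\Omega b\cdot\nabla(\Delta_q u\cdot\Delta_q b)\,dxdy=0$. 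The pressure term drops after using $\nabla\cdot\Delta_q u=0$. This leaves the dyadic energy balance
\begin{equation*}
\frac12\frac{d}{dt}\big(\|\Delta_q u\|_{L^2}^2+\|\Delta_q b\|_{L^2}^2\big)+\nu\|\pa_1\Delta_q u\|_{L^2}^2+\eta\|\pa_1\Delta_q b\|_{L^2}^2 = \mathcal{R}_q,
\end{equation*}
where $\mathcal{R}_q$ collects the four commutators $[\Delta_q,u\cdot\nabla]u$, $[\Delta_q,u\cdot\nabla]b$, $[\Delta_q,b\cdot\nabla]b$, $[\Delta_q,b\cdot\nabla]u$ paired against $\Delta_q u$ or $\Delta_q b$.

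Next I would multiply by $2^{2qs}$ and sum over $q\ge -1$. The left side reproduces $\frac12\frac{d}{dt}\|(u,b)\|_{H^s}^2$ together with the dissipation $\overline{C}\sum_q 2^{2qs}\|\pa_1\Delta_q(u,b)\|_{L^2}^2$. The essential point, and the source of all the difficulty, is that this dissipation controls only the horizontal derivative $\pa_1$ and not the full gradient: since $\Delta_q$ localizes to $|(k,\xi)|\sim 2^q$ while $\pa_1$ sees only the $k$-component, the localized dissipation is genuinely anisotropic and furnishes at best $\overline{C}\|\pa_1(u,b)\|_{H^s}^2$ rather than $\|(u,b)\|_{H^{s+1}}^2$. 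Consequently every vertical derivative produced by the nonlinearity must be paid for by something other than dissipation.

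The heart of the argument is therefore the estimate of $\sum_q 2^{2qs}\mathcal{R}_q$, for which I would invoke the anisotropic three-linear commutator bound of Lemma \ref{three-linear term}. Expanding each commutator by Bony's decomposition $fg=T_fg+T_gf+R(f,g)$ and splitting $u\cdot\nabla=u^1\pa_1+u^2\pa_2$, I would estimate the pieces with anisotropic H\"older inequalities in the $L^q_yL^p_x$ norms together with Bernstein's inequality. Two features of the strip geometry are decisive here. First, the vertical velocity has zero $x$-average, $\bar u^2=0$ by \eqref{u2=0}, so wherever a factor $u^2$ (or a remainder $\widetilde f$) appears it may be replaced by its horizontal derivative through the Poincar\'e inequality \eqref{Poincare}, turning an uncontrolled vertical term into a dissipation-friendly $\pa_1$ term. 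Second, the interpolation \eqref{interpolation} lets me trade $L^\infty_x$ for a half-derivative in $\pa_1$. Routing the estimates in this way, the dangerous contributions of $\mathcal{R}_q$ end up bounded by products in which the ``extra'' factors are precisely the quantities $\|(u,b)\|_{L^2}$ and $\|\pa_2(u,b)\|_{L^2}$ controlled by Proposition \ref{prop-2.1}, while the remaining factors reconstruct the dissipation quantity $\|\pa_1(u,b)\|_{H^s}$.

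Assembling these bounds should yield a differential inequality of the form
\begin{equation*}
\frac{d}{dt}\|(u,b)\|_{H^s}^2+\overline{C}\|\pa_1(u,b)\|_{H^s}^2\le C\big(\|\pa_2(u,b)\|_{L^2}+\|\pa_2(u,b)\|_{L^2}^2\big)\|\pa_1(u,b)\|_{H^s}^2+ C\,g(t)\,\|(u,b)\|_{H^s}^2,
\end{equation*}
where $g(t)=\|\pa_1(u,b)\|_{L^2}^2+\|\pa_1\pa_2(u,b)\|_{L^2}^2\in L^1(\R_+)$ by Proposition \ref{prop-2.1}. Since the smallness condition \eqref{smallness condition} forces $\|\pa_2(u,b)\|_{L^2}^2\le C_0\delta$ to be small uniformly in time, I can choose $\delta$ small enough that the first term on the right is absorbed into the dissipation on the left. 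A Gronwall argument then controls $\|(u,b)(t)\|_{H^s}^2$ by $\|(u_0,b_0)\|_{H^s}^2\exp\!\big(C\int_0^\infty g\big)\le \|(u_0,b_0)\|_{H^s}^2\, e^{CC_0\delta/\overline{C}}$, uniformly in $t$, and integrating the surviving dissipation in time gives $\pa_1(u,b)\in L^2(\R_+;H^s)$. This proves the proposition. The main obstacle, as indicated, is the anisotropic commutator estimate: one must organize the paraproduct decomposition so that every vertical derivative is compensated either by the smallness of $\|\pa_2(u,b)\|_{L^2}$ or, through the $\mathbb{T}$-Poincar\'e inequality, by the horizontal dissipation --- which is exactly the content of Lemma \ref{three-linear term}.
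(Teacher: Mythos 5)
Your proposal follows essentially the same route as the paper's own proof: dyadic energy estimates via $\Delta_q$, the anisotropic trilinear bound of Lemma \ref{three-linear term} (whose proof is exactly the Bony/anisotropic-H\"older/Poincar\'e machinery you describe), the $L^1$-in-time quantities and smallness from Proposition \ref{prop-2.1} used to absorb the $\|\pa_1(u,b)\|_{H^s}^2$-type terms into the horizontal dissipation, and Gr\"onwall. The only (harmless) organizational difference is that you split off the full transport and Lorentz-coupling terms, which cancel exactly, and treat genuine commutators, whereas the paper applies Lemma \ref{three-linear term} directly to $\Delta_q(f\cdot\nabla g)$ and then estimates the leftover truncated terms $-\int_{\Omega}S_q\widetilde f^2\,\partial_2\Delta_q g\cdot\Delta_q h\,dxdy$ after integrating by parts and using $\partial_2 S_q\widetilde f^2=-\partial_1 S_q\widetilde f^1$; these do not vanish but satisfy the same bounds, so both bookkeepings lead to the differential inequality you state.
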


The proof of Proposition \ref{prop-Hs} deeply relays on the following lemma.
\begin{lemma}
\label{three-linear term}
Let $f, g, h$ be three smooth vector fields defined in $\Omega$ and $\nabla\cdot f=0$. Then we have the following estimate for the three-linear term
\begin{equation}
\begin{aligned}
&\quad-\int_{\Omega}\Delta_q(f\cdot \nabla g)\cdot\Delta_q h~dxdy\\
& \leq 
C2^{-2qs}b_q(\left\|\pa_1{f}\right\|_{L^2}\left\|\pa_1\pa_2 {f}\right\|_{L^2}+\left\|{f}\right\|_{L^2}^2\left\|\pa_1 {f}\right\|_{L^2}^2+\left\|{f}\right\|_{L^2}^2\left\|\pa_1\pa_2 {f}\right\|_{L^2}^2\\
&\quad+\left\|\pa_1{g}\right\|_{L^2}\left\|\pa_1\pa_2 {g}\right\|_{L^2}+\left\|{\pa_1\pa_2g}\right\|_{L^2}^2)\times(\left\| {f}\right\|_{H^s}^2+ \left\| g\right\|_{H^s}^2+\left\| h\right\|_{H^s}^2 ) \\
&\quad+2^{-2qs}b_q(\left\|{f}\right\|_{L^2}^{\frac12}\left\|\pa_2 {f}\right\|_{L^2}^{\frac12}+\left\|{\pa_2g}\right\|_{L^2})\times
(\left\| \pa_1{f}\right\|_{H^s}^2+\left\|\partial_{1} {g}\right\|_{H^s}^2+ \left\| \pa_1h\right\|_{H^s}^2)\\
&\quad+C\varepsilon_02^{-2qs}b_q(\left\| \pa_1{f}\right\|_{H^s}^2+\left\|\partial_{1} {g}\right\|_{H^s}^2+ \left\|\pa_1 h\right\|_{H^s}^2)-\int_{\Omega}{S}_{q}\widetilde{f}^2\partial_2\Delta_q g\cdot\Delta_q h~dxdy,
\end{aligned}
\end{equation} 
for any $\varepsilon_0>0$.
\end{lemma}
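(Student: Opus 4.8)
The plan is to combine Bony's paraproduct decomposition with a commutator argument so that the single genuinely dangerous contribution—the low--frequency advection in the undissipated vertical direction—is isolated as the explicit last term $-\int_\Omega S_q\widetilde f^2\,\partial_2\Delta_q g\cdot\Delta_q h$. Using $\nabla\cdot f=0$ I would first write
\[
-\int_{\Omega}\Delta_q(f\cdot\nabla g)\cdot\Delta_q h\,dxdy=-\int_{\Omega}[\Delta_q,\,f\cdot\nabla]g\cdot\Delta_q h\,dxdy-\int_{\Omega}(f\cdot\nabla)\Delta_q g\cdot\Delta_q h\,dxdy,
\]
where $[\Delta_q,f\cdot\nabla]g=\Delta_q(f\cdot\nabla g)-(f\cdot\nabla)\Delta_q g$. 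For the commutator I would insert the kernel representation $\Delta_q w=\int_{\Omega}h_q(\mathbf x')\,w(\cdot-\mathbf x')\,d\mathbf x'$ from \eqref{hq} and Taylor--expand $f$ to first order; this trades the derivative on $g$ for a derivative on $f$ at the cost of a factor $2^{-q}$, which is what ultimately yields the spectral weight $2^{-2qs}b_q$ after squaring and summing against the $H^s$ norms.

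I would then estimate the commutator anisotropically, separating the role of $\partial_1 f$ (the dissipated direction) from that of $\partial_2 f$. The idea is that every full gradient of $f$ produced by the Taylor step should be kept as $\partial_1 f$ whenever possible, while the residual $\partial_2$--factors are measured only in the low--order norms $\|f\|_{L^2}$ and $\|\partial_2 f\|_{L^2}$ that the energy bound of Proposition~\ref{prop-2.1} controls. Applying the anisotropic H\"older inequality in the mixed spaces $L^\infty_xL^2_y$, $L^2_xL^2_y$ and $L^\infty_yL^2_x$, together with the interpolation \eqref{interpolation} and Poincar\'e \eqref{Poincare} inequalities of Lemma~\ref{properties1}, should produce precisely the first two groups of terms, the coefficients $\|\partial_1 f\|_{L^2}\|\partial_1\partial_2 f\|_{L^2}$ and $\|f\|_{L^2}^{1/2}\|\partial_2 f\|_{L^2}^{1/2}$ arising from this bookkeeping.

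For the advection term $-\int_\Omega (f\cdot\nabla)\Delta_q g\cdot\Delta_q h$ I would split $f\cdot\nabla=f^1\partial_1+f^2\partial_2$. The horizontal piece carries the good derivative $\partial_1\Delta_q g$, so anisotropic H\"older bounds it by the second group plus the small term $C\varepsilon_0 2^{-2qs}b_q(\|\partial_1 f\|_{H^s}^2+\|\partial_1 g\|_{H^s}^2+\|\partial_1 h\|_{H^s}^2)$ reserved for later absorption by dissipation. For the vertical piece $-\int_\Omega f^2\partial_2\Delta_q g\cdot\Delta_q h$ I would invoke property \eqref{u2=0} of Lemma~\ref{properties2} to replace $f^2$ by its $x$--fluctuation $\widetilde f^2$ (as $\bar f^2=0$), and then write $\widetilde f^2=S_q\widetilde f^2+(\mathrm{Id}-S_q)\widetilde f^2$. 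The high--frequency part $(\mathrm{Id}-S_q)\widetilde f^2$ gains a factor from Bernstein's inequality and, after Poincar\'e and interpolation, feeds back into the first group, while the low--frequency part is exactly the surviving term $-\int_\Omega S_q\widetilde f^2\,\partial_2\Delta_q g\cdot\Delta_q h$.

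The main obstacle is the anisotropic bookkeeping forced by the absence of vertical dissipation: each occurrence of $\partial_2$ must be driven either onto a low--order factor whose $L^2$ norm is controlled by the energy, or into the single isolated transport term $S_q\widetilde f^2\partial_2\Delta_q g$, which will cancel against a symmetric contribution when the lemma is applied to the coupled system $(u,b)$ (there $\partial_2\widetilde f^2=-\partial_1\widetilde f^1$ by \eqref{divergence-free-2} supplies the needed structure). Arranging the anisotropic H\"older, Poincar\'e and interpolation inequalities so that exactly one free vertical--transport term remains, and so that every surviving high--order factor carries the dissipated derivative $\partial_1$, is the delicate point; the final summation in $q$ against $\ell^1$ sequences $b_q$ is then routine.
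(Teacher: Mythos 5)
There is a genuine gap at the very first step, and it is fatal for the range of $s$ the lemma is actually used in. You treat the \emph{full} commutator $[\Delta_q,f\cdot\nabla]g$ by the kernel representation \eqref{hq} plus a first-order Taylor expansion, and assert that the resulting factor $2^{-q}$ ``ultimately yields the spectral weight $2^{-2qs}b_q$''. It does not. In the Taylor-expanded expression $[\Delta_q,f^j\partial_j]g(\mathbf{x})=\int_0^1\!\!\int h_q(\mathbf{z})\,\mathbf{z}\cdot\nabla f^j(\mathbf{x}-s\mathbf{z})\,\partial_j g(\mathbf{x}-\mathbf{z})\,d\mathbf{z}\,ds$, neither $f$ nor $g$ is frequency-localized, so after any (anisotropic) H\"older estimate they can only contribute low-order norms; the only factor carrying a dyadic weight is $\Delta_q h$. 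The best bound this argument can give is of the form $2^{-q}\cdot(\mbox{low-order norms of }\nabla f,\nabla g)\cdot\|\Delta_qh\|_{L^2}\lesssim 2^{-q(1+s)}b_q\,(\cdots)\,\|h\|_{H^s}$, and $2^{-q(1+s)}$ is strictly weaker than $2^{-2qs}$ whenever $s>1$. Since the lemma is applied in the proof of Proposition \ref{prop-Hs} by multiplying by $2^{2qs}$ and summing over $q$ (with $s\geq 2$ for Theorem \ref{thm1}), your bound contributes $\sum_q 2^{q(s-1)}b_q=\infty$ and the argument collapses. The missing factor $2^{-q(s-1)}$ can only come from knowing that the copy of $g$ inside the dangerous part of the commutator sits at frequency $\sim 2^q$, i.e.\ from performing Bony's decomposition \emph{before} (or inside) the commutator. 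That is exactly the paper's order of operations: split $f\cdot\nabla g=f^1\partial_1 g+f^2\partial_2 g$, apply Bony to each, and apply the kernel/Taylor argument only to the low-high piece of the vertical term (the term $Q_{11}$ of the appendix, where $f$ enters as $S_{k-1}\widetilde f^2$ and $g$ as $\Delta_k g$ with $|k-q|\leq 2$, so that $\|\partial_2\Delta_k g\|_{L^2}\lesssim 2^{k}2^{-ks}b_k\|g\|_{H^s}$ restores the weight). The high-low and remainder interactions get no gain whatsoever from the commutator structure and must be estimated directly, with $f$ rather than $g$ carrying the $H^s$ weight — this is what the paper's terms $P_2$, $P_3$, $Q_2$, $Q_3$ do.

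A second, related defect: by commuting the whole $f\cdot\nabla$ you also create the horizontal advection term $-\int_\Omega f^1\partial_1\Delta_q g\cdot\Delta_q h\,dxdy$, which is absent from the right-hand side of Lemma \ref{three-linear term} and is not innocuous. Since $f^1$ is neither small nor frequency-localized, naive estimates produce either a loss of $2^{q/2}$ (from Bernstein in one variable) or a coefficient like $\|f\|_{L^2}\|\partial_2 f\|_{L^2}$, which is neither time-integrable nor absorbable by dissipation, hence inadmissible for the Gr\"onwall step. It can be salvaged, but only by the paper's bar/tilde bookkeeping: write $f^1=\bar f^1+\widetilde f^1$, note that $\partial_1\Delta_q g=\partial_1\Delta_q\widetilde g$ and $\Delta_q\widetilde h$ have zero $x$-average so that Poincar\'e \eqref{Poincare} upgrades $\|\Delta_q\widetilde h\|_{L^2}$ to $\|\partial_1\Delta_q h\|_{L^2}$ (sending the $\bar f^1$-contribution into the second group), while $\|\widetilde f^1\|_{L^\infty}\lesssim\|\partial_1 f\|_{L^2}^{1/2}\|\partial_1\partial_2 f\|_{L^2}^{1/2}$ by \eqref{interpolation} and \eqref{Poincare} sends the $\widetilde f^1$-contribution into the first and $\varepsilon_0$ groups. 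The paper avoids this issue entirely by never commuting the horizontal part: $\partial_1$ is the dissipated derivative, so $P$ is estimated directly from Bony. Finally, in your vertical advection term, the ``Bernstein gain'' for $(\mathrm{Id}-S_q)\widetilde f^2$ should be made precise: by frequency support only the blocks $\Delta_k\widetilde f^2$ with $k\sim q$ survive, and the gain is really $\|\Delta_k\widetilde f^2\|_{L^2}\lesssim 2^{-k}\|\partial_1\Delta_k f\|_{L^2}$, coming from the divergence-free identity $\partial_2\widetilde f^2=-\partial_1\widetilde f^1$ of \eqref{divergence-free-2}; this is what offsets the $2^q$ cost of $\partial_2\Delta_q g$, exactly as in the paper's term $Q_{12}$. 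So while your overall architecture (isolating $-\int_\Omega S_q\widetilde f^2\partial_2\Delta_q g\cdot\Delta_q h\,dxdy$ and steering everything else toward dissipated norms) matches the paper's intent, the proof as proposed fails at the commutator step for every $s>1$.
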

The proof of Lemma \ref{three-linear term} is based on Littlewood-Paley decomposition and the process is complicate, we left the details in the Appendix.

Now we give the proof of Proposition \ref{prop-Hs}.
\begin{proof}[Proof of Proposition \ref{prop-Hs}]
Applying $\Delta_q$ to the first and second equation of $\eqref{system-MHD}$, taking the $L^2$ inner product	of
the resulting equation with $(\Delta_q u, \Delta_q b)$, after integrating by part we obtain
\begin{equation*}
\begin{split}
&\quad\frac{1}{2} \frac{d}{d t}\left(\|\Delta_qu(t)\|_{L^{2}}^{2}+\|\Delta_qb(t)\|_{L^{2}}^{2}\right)+\nu\|\partial_{1}\Delta_q u\|_{2}^{2}+\eta\|\partial_{1}\Delta_q b\|_{2}^{2}\\
&=-\int_{\Omega}\Delta_q(u\cdot\nabla u)\cdot\Delta_q u~dxdy+\int_{\Omega}\Delta_q(b\cdot\nabla b)\cdot\Delta_q u~dxdy\\
&\quad-\int_{\Omega}\Delta_q(u\cdot\nabla b)\cdot\Delta_q b~dxdy+\int_{\Omega}\Delta_q(b\cdot\nabla u)\cdot\Delta_q b~dxdy.
\end{split}
\end{equation*}
Applying Lemma \ref{three-linear term},
\begin{align}
\label{Hs-ub}
&\quad\frac{1}{2} \frac{d}{d t}\left(\|\Delta_qu(t)\|_{L^{2}}^{2}+\|\Delta_qb(t)\|_{L^{2}}^{2}\right)+\nu\|\partial_{1}\Delta_q u\|_{2}^{2}+\eta\|\partial_{1}\Delta_q b\|_{2}^{2}\notag\\
& \leq 
C2^{-2qs}b_q(\left\|\pa_1{u}\right\|_{L^2}\left\|\pa_1\pa_2 {u}\right\|_{L^2}+\left\|{u}\right\|_{L^2}^2\left\|\pa_1 {u}\right\|_{L^2}^2+\left\|{u}\right\|_{L^2}^2\left\|\pa_1\pa_2 {u}\right\|_{L^2}^2+\left\|{\pa_1\pa_2u}\right\|_{L^2}^2\notag\\
&\quad+\left\|\pa_1{b}\right\|_{L^2}\left\|\pa_1\pa_2 {b}\right\|_{L^2}+\left\|{b}\right\|_{L^2}^2\left\|\pa_1 {b}\right\|_{L^2}^2+\left\|{b}\right\|_{L^2}^2\left\|\pa_1\pa_2 {b}\right\|_{L^2}^2+\left\|{\pa_1\pa_2b}\right\|_{L^2}^2)\notag\\
&\quad\times(\left\| {u}\right\|_{H^s}^2+ \left\| b\right\|_{H^s}^2 )+C2^{-2qs}b_q(\left\|{u}\right\|_{L^2}^{\frac12}\left\|\pa_2 {u}\right\|_{L^2}^{\frac12}+\left\|{\pa_2u}\right\|_{L^2}+\left\|{b}\right\|_{L^2}^{\frac12}\left\|\pa_2 {b}\right\|_{L^2}^{\frac12} \notag\\
&\quad+\left\|{\pa_2b}\right\|_{L^2})\times
(\left\| \pa_1{u}\right\|_{H^s}^2+\left\|\partial_{1} {b}\right\|_{H^s}^2)+C\varepsilon_02^{-2qs}b_q(\left\| \pa_1{u}\right\|_{H^s}^2+\left\|\partial_{1} {b}\right\|_{H^s}^2)\notag\\
& \quad-\int_{\Omega}{S}_{q}\widetilde{u}^2\partial_2\Delta_q u\cdot\Delta_q u~dxdy+\int_{\Omega}{S}_{q}\widetilde{b}^2\partial_2\Delta_q b\cdot\Delta_q u~dxdy\notag\\
&\quad-\int_{\Omega}{S}_{q}\widetilde{u}^2\partial_2\Delta_q b\cdot\Delta_q b~dxdy
+ \int_{\Omega}{S}_{q}\widetilde{b}^2\partial_2\Delta_q u\cdot\Delta_q b~dxdy.
\end{align}
Now we need to deal with the last four terms in the right-hand side.
After integrating by part and combining with the divergence-free condition of $\widetilde{u}$, we can write 
\begin{equation*}
-\int_{\Omega}{S}_{q}\widetilde{u}^2\partial_2\Delta_q u\Delta_q u~dxdy=\frac12\int_{\Omega}\partial_2{S}_{q}\widetilde{u}^2\Delta_q u\cdot\Delta_q u~dxdy=-\frac12\int_{\Omega}\partial_1{S}_{q}\widetilde{u}^1\Delta_q u\cdot\Delta_q u~dxdy.
\end{equation*}
Then according to the decomposition \eqref{decomposition}, we can divide it into the following four parts,
\begin{align*}
&\quad-\frac12\int_{\Omega}\partial_1{S}_{q}\widetilde{u}^1\Delta_q u\cdot\Delta_q u~dxdy\\
&=-\frac12\int_{\Omega}\partial_1{S}_{q}\widetilde{u}^1\Delta_q \bar{u}\cdot\Delta_q \bar{u}~dxdy-\frac12\int_{\Omega}\partial_1{S}_{q}\widetilde{u}^1\Delta_q \bar{u}\cdot\Delta_q \widetilde{u}~dxdy\\
&\quad-\frac12\int_{\Omega}\partial_1{S}_{q}\widetilde{u}^1\Delta_q \widetilde{u}\cdot\Delta_q \bar{u}~dxdy-\frac12\int_{\Omega}\partial_1{S}_{q}\widetilde{u}^1\Delta_q \widetilde{u}\cdot\Delta_q \widetilde{u}~dxdy\\
&\triangleq D_{1}+D_{2}+D_{3}+D_{4}.
\end{align*}
For $D_1$, noticing that $\bar{u}$ independent of $x$. So it is a simple matter to check that $D_1=0$. 
Then we estimate $D_2$. We first apply anisotropic H\"older inequality, 
\begin{equation*}
\begin{split}
D_2&=-\frac12\int_{\Omega}\partial_1{S}_{q}\widetilde{u}^1\Delta_q \bar{u}\cdot\Delta_q \widetilde{u}~dxdy\leq C\left\|\pa_1S_q\widetilde{u}^1\right\|_{L^\infty_yL^2_x}\left\|\Delta_q\bar{u}\right\|_{L^2_y}\left\|\Delta_q\widetilde{u}\right\|_{L^2_yL^2_x}.
\end{split}
\end{equation*}
Then by the interpolation \eqref{interpolation}
and Poincar\'e inequality \eqref{Poincare}, 
\begin{equation*}
\begin{split}
D_2&\leq C\left\|\pa_1S_q\widetilde{u}^1\right\|_{L^2}^{\frac12}\left\|\pa_1\pa_2S_q\widetilde{u}^1\right\|_{L^2}^{\frac12}\left\|\Delta_q\bar{u}\right\|_{L^2_y}\left\|\pa_1\Delta_q\widetilde{u}\right\|_{L^2_yL^2_x}\\
&\leq C2^{-2qs}b_q\|\pa_1u\|_{L^2}^{\frac12}\|\pa_1\pa_2u\|_{L^2}^{\frac12}\|u\|_{H^s}\|\pa_1u\|_{H^s}.
\end{split}
\end{equation*}
The same conclusion can also be drawn for $D_3$ and $D_4$ along the same way. Thus we deduce
\begin{equation*}
\begin{split}
-\frac12\int_{\Omega}\partial_1{S}_{q}\widetilde{u}^1\Delta_q u\cdot\Delta_q u~dxdy\leq C2^{-2qs}b_q\|\pa_1u\|_{L^2}^{\frac12}\|\pa_1\pa_2u\|_{L^2}^{\frac12}\|u\|_{H^s}\|\pa_1u\|_{H^s}.
\end{split}
\end{equation*}
Similarly, we can also obtain
\begin{equation*}
\begin{split}
-\int_{\Omega}{S}_{q}\widetilde{u}^2\partial_2\Delta_q b\cdot\Delta_q b~dxdy\leq C2^{-2qs}b_q\|\pa_1u\|_{L^2}^{\frac12}\|\pa_1\pa_2u\|_{L^2}^{\frac12}\|b\|_{H^s}\|\pa_1b\|_{H^s}.
\end{split}
\end{equation*}
Applying the integration by parts and according to the divergence-free property of $\widetilde{b}$,
\begin{equation*}
\begin{split}
&\quad\int_{\Omega}{S}_{q}\widetilde{b}^2\partial_2\Delta_q b\cdot\Delta_q u~dxdy+\int_{\Omega}{S}_{q}\widetilde{b}^2\partial_2\Delta_q u\cdot\Delta_q b~dxdy\\
&=-\int_{\Omega}\pa_2{S}_{q}\widetilde{b}^2\Delta_q b\cdot\Delta_q u~dxdy=\int_{\Omega}\pa_1{S}_{q}\widetilde{b}^1\Delta_q b\cdot\Delta_q u~dxdy.
\end{split}
\end{equation*}
Then this term can be handled following the same method that
\begin{equation*}
\begin{split}
\int_{\Omega}\pa_1{S}_{q}\widetilde{b}^1\Delta_q b\cdot\Delta_q u~dxdy\leq C2^{-2qs}b_q\|\pa_1b\|_{L^2}^{\frac12}\|\pa_1\pa_2b\|_{L^2}^{\frac12}(\|u\|_{H^s}\|\pa_1b\|_{H^s}+\|b\|_{H^s}\|\pa_1u\|_{H^s}).
\end{split}
\end{equation*}
Inserting these estimate into \eqref{Hs-ub}, one can deduce
\begin{equation}
\label{Hs-ub-1}
\begin{split}
&\quad\frac{1}{2} \frac{d}{d t}\left(\|\Delta_qu(t)\|_{L^{2}}^{2}+\|\Delta_qb(t)\|_{L^{2}}^{2}\right)+\nu\|\partial_{1}\Delta_q u\|_{2}^{2}+\eta\|\partial_{1}\Delta_q b\|_{2}^{2}\\
& \leq 
C2^{-2qs}b_q(\left\|\pa_1{u}\right\|_{L^2}\left\|\pa_1\pa_2 {u}\right\|_{L^2}+\left\|{u}\right\|_{L^2}^2\left\|\pa_1 {u}\right\|_{L^2}^2+\left\|{u}\right\|_{L^2}^2\left\|\pa_1\pa_2 {u}\right\|_{L^2}^2+\left\|{\pa_1\pa_2u}\right\|_{L^2}^2\\
&\quad+\left\|\pa_1{b}\right\|_{L^2}\left\|\pa_1\pa_2 {b}\right\|_{L^2}+\left\|{b}\right\|_{L^2}^2\left\|\pa_1 {b}\right\|_{L^2}^2+\left\|{b}\right\|_{L^2}^2\left\|\pa_1\pa_2 {b}\right\|_{L^2}^2+\left\|{\pa_1\pa_2b}\right\|_{L^2}^2)\\
&\quad\times(\left\| {u}\right\|_{H^s}^2+ \left\| b\right\|_{H^s}^2 )+C2^{-2qs}b_q(\left\|{u}\right\|_{L^2}^{\frac12}\left\|\pa_2 {u}\right\|_{L^2}^{\frac12}+\left\|{\pa_2u}\right\|_{L^2}+\left\|{b}\right\|_{L^2}^{\frac12}\left\|\pa_2 {b}\right\|_{L^2}^{\frac12} \\
&\quad+\left\|{\pa_2b}\right\|_{L^2})\times
(\left\| \pa_1{u}\right\|_{H^s}^2+\left\|\partial_{1} {b}\right\|_{H^s}^2)+C\varepsilon_02^{-2qs}b_q(\left\| \pa_1{u}\right\|_{H^s}^2+\left\|\partial_{1} {b}\right\|_{H^s}^2).
\end{split}
\end{equation}
Choosing $\delta$ and $\varepsilon_0$ small (for example $\sqrt{\delta}=\varepsilon_0=\frac{\min\{\nu,\eta\}}{4C(1+C_0)\sum_qb_q}$), multiplying \eqref{Hs-ub-1} by $2^{2qs}$
and taking summation in $q$, combining with the result in Proposition \ref{prop-2.1}, we can deduce
\begin{equation}
\label{Hs-ub-2}
\begin{split}
&\quad\frac{1}{2} \frac{d}{d t}\left(\|u(t)\|_{H^s}^{2}+\|b(t)\|_{H^s}^{2}\right)+\frac{\overline{C}}{2}(\|\partial_{1} u\|_{H^s}^{2}+\|\partial_{1} b\|_{H^s}^{2})\\
& \leq 
C(\left\|\pa_1{u}\right\|_{L^2}\left\|\pa_1\pa_2 {u}\right\|_{L^2}+\left\|{u}\right\|_{L^2}^2\left\|\pa_1 {u}\right\|_{L^2}^2+\left\|{u}\right\|_{L^2}^2\left\|\pa_1\pa_2 {u}\right\|_{L^2}^2+\left\|{\pa_1\pa_2u}\right\|_{L^2}^2\\
&\quad+\left\|\pa_1{b}\right\|_{L^2}\left\|\pa_1\pa_2 {b}\right\|_{L^2}+\left\|{b}\right\|_{L^2}^2\left\|\pa_1 {b}\right\|_{L^2}^2+\left\|{b}\right\|_{L^2}^2\left\|\pa_1\pa_2 {b}\right\|_{L^2}^2+\left\|{\pa_1\pa_2b}\right\|_{L^2}^2)\\
&\quad\times(\left\| {u}\right\|_{H^s}^2+ \left\| b\right\|_{H^s}^2 ).
\end{split}
\end{equation}
Applying Gr\"onwall's Lemma and combining with the uniform global bound for $(u,b)$ in Proposition \eqref{prop-2.1}, we can obtain the desired result, which completes the proof of this proposition.
\end{proof} 

\end{section}

\section{proof of theroem \ref{thm1}}
This section is devoted to proving Theorem \ref{thm1}. To prove the existence, we need to add an artificial viscosity $-\epsilon\Delta(u,b)$ on system \eqref{system-MHD} and by regularizing the initial data. For this fully parabolic system with smooth initial data, we have a unique global solution $(u_{\epsilon},b_{\epsilon})$ by the classical result on MHD system. It is easy to see that $(u_{\epsilon},b_{\epsilon})$ obeys the a priori bounds in Proposition \ref{prop-2.1}, Proposition \ref{prop-Hs} and uniformly in $\epsilon$. The solution $(u,b)$ of system \ref{system-MHD} is obtained as a limit of $(u_{\epsilon},b_{\epsilon})$ and obeys the bounds in Proposition \ref{prop-2.1} and Proposition \ref{prop-Hs}. Because these processes are classical, we will not give all the details about the construction of the solution, and let them for the readers.

Then we prove the uniqueness. Assume $(u_1,b_1,p_1)$ and $(u_2,b_2,p_2)$ are two solutions of system \eqref{system-MHD} with the same initial data. Denoting 
$\delta u\triangleq u_2-u_1$, $\delta b\triangleq b_2-b_1$,
$\delta p\triangleq p_2-p_1$, then we can obtain $(\delta u,\delta b,\delta p)$ satisfies:
\begin{equation}
\label{system-difference}
\left\{\begin{array}{l}
{\partial_{t} \delta u+u_2\cdot \nabla \delta u+\delta u\cdot\nabla u_1-\nu\partial_1^{2} \delta u+\nabla \delta p=b_2 \cdot \nabla \delta b}+\delta b\cdot\nabla b_1,  \\
{\partial_{t} \delta b+u_2\cdot \nabla \delta b+\delta u\cdot\nabla b_1-\eta\partial_1^{2} \delta b=b_2 \cdot \nabla \delta u}+\delta b\cdot\nabla u_1, \\
\nabla\cdot\delta u=\nabla\cdot\delta b=0,\\
\delta u(x, y, 0)=0,~~\delta b(x, y, 0)=0.
\end{array}\right.
\end{equation}
Standard $L^2$ estimate yields that
\begin{equation}
\label{uniqueness-1}
\begin{split}
&\quad\frac{1}{2}\frac{d}{dt}(\|\delta u(t)\|_{L^2}^2+\|\delta b(t)\|_{L^2}^2)+\nu\|\pa_1\delta u\|_{L^2}^2+\eta\|\pa_1\delta b\|_{L^2}^2\\
&=-\int_{\Omega}\delta u\cdot\nabla u_1\cdot\delta u~dxdy+\int_{\Omega}\delta b\cdot\nabla u_1\cdot\delta u~dxdy\\
&\quad-\int_{\Omega}\delta u\cdot\nabla b_1\cdot\delta b~dxdy+\int_{\Omega}\delta b\cdot\nabla u_1\cdot\delta b~dxdy\\
&\triangleq K_1+K_2+K_3+K_4.
\end{split}
\end{equation}
By anisotropic H\"older inequality, interpolation inequality and Young's inequality, 
\begin{equation*}
\begin{split}
K_1&=-\int_{\Omega}\delta u\cdot\nabla u_1\cdot\delta u~dxdy\\
&\leq\|\delta u\|_{L^2}\|\nabla u_1\|_{L^\infty_yL^2_x}\|\delta u\|_{L^\infty_xL^2_y}\\
&\leq C\|\delta u\|_{L^2}\|\nabla u_1\|_{L^2}^{\frac12}\|\pa_2\nabla u_1\|_{L^2}^{\frac12}(\|\delta u\|_{L^2}^{\frac12}\|\pa_1\delta u\|_{L^2}^{\frac12}+\|\delta u\|_{L^2})\\
&\leq C\|\delta u\|_{L^2}^2\left(\|\nabla u_1\|_{L^2}^{\frac12}\|\pa_2\nabla u_1\|_{L^2}^{\frac12}+\|\nabla u_1\|_{L^2}^{\frac23}\|\pa_2\nabla u_1\|_{L^2}^{\frac23}\right)+\frac{\nu}{4}\|\pa_1\delta u\|_{L^2}^2.
\end{split}
\end{equation*}
Along the same way, one can deduce
\begin{equation*}
\begin{split}
K_2
&\leq C\|\delta u\|_{L^2}\|\delta b\|_{L^2}\left(\|\nabla u_1\|_{L^2}^{\frac12}\|\pa_2\nabla u_1\|_{L^2}^{\frac12}+\|\nabla u_1\|_{L^2}^{\frac23}\|\pa_2\nabla u_1\|_{L^2}^{\frac23}\right)+\frac{\nu}{4}\|\pa_1\delta u\|_{L^2}^2,
\end{split}
\end{equation*}
\begin{equation*}
\begin{split}
K_3
&\leq C\|\delta u\|_{L^2}\|\delta b\|_{L^2}\left(\|\nabla b_1\|_{L^2}^{\frac12}\|\pa_2\nabla b_1\|_{L^2}^{\frac12}+\|\nabla b_1\|_{L^2}^{\frac23}\|\pa_2\nabla b_1\|_{L^2}^{\frac23}\right)+\frac{\eta}{4}\|\pa_1\delta b\|_{L^2}^2,
\end{split}
\end{equation*}
and
\begin{equation*}
\begin{split}
K_4
&\leq C\|\delta b\|_{L^2}^2\left(\|\nabla u_1\|_{L^2}^{\frac12}\|\pa_2\nabla u_1\|_{L^2}^{\frac12}+\|\nabla u_1\|_{L^2}^{\frac23}\|\pa_2\nabla u_1\|_{L^2}^{\frac23}\right)+\frac{\eta}{4}\|\pa_1\delta b\|_{L^2}^2.
\end{split}
\end{equation*}
Inserting the estimates of $K_1-K_4$ into inequality \eqref{uniqueness-1}, combining with the bounds in Proposition \ref{prop-Hs} we can conclude the uniqueness result.

\begin{section}{proof of theroem \ref{thm2}}
	
This section focuses on the proof of Theorem \ref{thm2}. 
According to Lemma \ref{properties1} and Lemma \ref{properties2}, it is a simple matter to check that $(\bar{u},\bar{b})$ satisfies the following system,
	\begin{equation}
	\label{bar-MHD}
	\left\{\begin{array}{l}
	{\partial_{t} \bar{u}+\overline{u\cdot \nabla \widetilde{u}}+\left(\begin{array}{c}
		{0} \\
		{\partial_{2} \bar{p}}
		\end{array}\right)
		=\overline{b \cdot \nabla \widetilde{b}}}, \\
	{\partial_t \bar{b}+\overline{u\cdot \nabla \widetilde{b}}=\overline{b\cdot \nabla \widetilde{u}}}.
	\end{array}\right.
	\end{equation}
Taking difference between \eqref{system-MHD} and \eqref{bar-MHD}, also by the properties in Lemma \ref{properties1} and Lemma \ref{properties2}, we can deduce
	\begin{equation}
	\label{tilde-MHD}
	\left\{\begin{array}{l}
	{\partial_{t} \widetilde{u}+\widetilde{u\cdot \nabla \widetilde{u}}+u^2\pa_2\bar{u}-\nu\pa_1^2\widetilde{u}+\nabla\widetilde{p}
		=\widetilde{b \cdot \nabla \widetilde{b}}+b^2\pa_2\overline{b},}\\
	{\partial_{t} \widetilde{b}+\widetilde{u\cdot \nabla \widetilde{b}}+u^2\pa_2\bar{b}-\eta\pa_1^2\widetilde{b}
		=\widetilde{b \cdot \nabla \widetilde{u}}+b^2\pa_2\overline{u}.}
	\end{array}\right.
	\end{equation}
Basic $L^2$ estimate yields that 
	\begin{equation}
	\begin{split}
	&\quad\frac{1}{2} \frac{d}{d t}(\|\widetilde{u}(t)\|_{L^{2}}^{2}+\| \widetilde{b}(t)\|_{L^2}^2)+\nu\|\partial_{1} \widetilde{u}\|_{2}^{2}+\eta\|\partial_{1} \widetilde{b}\|_{2}^{2}\\
	&=-\int_{\Omega}\widetilde{u\cdot \nabla \widetilde{u}}\cdot\widetilde{u}~dxdy+\int_{\Omega}\widetilde{b\cdot \nabla \widetilde{b}}\cdot\widetilde{u}~dxdy-\int_{\Omega}\widetilde{u\cdot \nabla \widetilde{b}}\cdot\widetilde{b}~dxdy+\int_{\Omega}\widetilde{b\cdot \nabla \widetilde{u}}\cdot\widetilde{b}~dxdy\\
	&\quad- \int_{\Omega}u^2\pa_2 \bar{u}\cdot\widetilde{u}~dxdy+ \int_{\Omega}b^2\pa_2 \bar{b}\cdot\widetilde{u}~dxdy- \int_{\Omega}u^2\pa_2 \bar{b}\cdot\widetilde{b}~dxdy+ \int_{\Omega}b^2\pa_2 \bar{u}\cdot\widetilde{b}~dxdy\\
	&\triangleq M_{11}+M_{12}+M_{13}+M_{14}+M_{21}+M_{22}+M_{23}+M_{24}.
	\end{split}
	\end{equation}
For $M_{11}$, according to the divergence-free condition of $u$, 
	\begin{equation*}
	\begin{split}
	M_{11}&=-\int_{\Omega}\widetilde{u\cdot \nabla \widetilde{u}}\cdot\widetilde{u}~dxdy\\
	&=-\int_{\Omega}{u\cdot \nabla \widetilde{u}}\cdot\widetilde{u}~dxdy+\int_{\Omega}\overline{u\cdot \nabla \widetilde{u}}\cdot\widetilde{u}~dxdy\\
	&=0.
	\end{split}
	\end{equation*}
Similarly, we can prove 
	\begin{equation*}
	M_{13}=0\quad \text{and} \quad M_{12}+M_{14}=0.
	\end{equation*}
For $M_{21}$, according to Property \eqref{u2=0},
	\begin{equation*}
	M_{21}=- \int_{\Omega}u^2\pa_2 \bar{u}\cdot\widetilde{u}~dxdy=- \int_{\Omega}\widetilde{u}^2\pa_2 \bar{u}\cdot\widetilde{u}~dxdy.
	\end{equation*}
By H\"older inequality, interpolation \eqref{interpolation} and Poincar\'e inequality \eqref{Poincare},  
	\begin{equation*}
	\begin{split}
	M_{21}&\leq C\|\pa_2 \bar{u}\|_{L^2_y}\|\widetilde{u}^2\|_{L^\infty_yL^2_x}\|\widetilde{u}\|_{L^2_yL^2_x}\\
	&\leq C\|\pa_2 \bar{u}\|_{L^2_y}\|\widetilde{u}^2\|_{L^2}^\frac12\|\pa_2\widetilde{u}^2\|_{L^2}^\frac12\|\widetilde{u}\|_{L^2}\\
	&\leq C\|\pa_2 u\|_{L^2}\|\pa_1\widetilde{u}\|_{L^2}^2.
	\end{split}
	\end{equation*}
Along the same way, we can obtain
	\begin{equation*}
	M_{21},M_{23},M_{24}\leq C(\|\pa_2 u\|_{L^2}+\|\pa_2 b\|_{L^2})(\|\pa_1\widetilde{u}\|_{L^2}^2+\|\pa_1\widetilde{b}\|_{L^2}^2).
	\end{equation*}
Then combining with the result in Proposition \ref{prop-2.1}, for small $\delta$,
	\begin{equation}
	\begin{split}
	\frac{d}{d t}(\|\widetilde{u}(t)\|_{L^{2}}^{2}+\| \widetilde{b}(t)\|_{L^2}^2)+\overline{C}(\|\partial_{1} \widetilde{u}\|_{2}^{2}+\|\partial_{1} \widetilde{b}\|_{2}^{2})\leq0.
	\end{split}
	\end{equation}
Using Poincar\'e inequality \eqref{Poincare},
	\begin{equation}
	\begin{split}
	\frac{d}{d t}(\|\widetilde{u}(t)\|_{L^{2}}^{2}+\| \widetilde{b}(t)\|_{L^2}^2)+\overline{C}_0(\| \widetilde{u}\|_{2}^{2}+\| \widetilde{b}\|_{2}^{2})\leq0.
	\end{split}
	\end{equation}
Then Gr\"onwall's Lemma yields that
	\begin{equation}
	\begin{split}
	\|\widetilde{u}(t)\|_{L^{2}}^{2}+\| \widetilde{b}(t)\|_{L^2}^2\leq e^{-\overline{C}_0t}(\|\widetilde{u}(0)\|_{L^{2}}^{2}+\| \widetilde{b}(0)\|_{L^2}^2)\leq Ce^{-\overline{C}_0t}(\|u_0\|_{L^{2}}^{2}+\| b_0\|_{L^2}^2),
	\end{split}
	\end{equation}
which completes the proof of Theorem \ref{thm2}.
	
\end{section}

\begin{section}{Applications to the tropical climatic model}
	
	The goal of this section is to present the proof of Theorem \ref{thm1-TC}. For the existence part, similar as the proof of Theorem \ref{thm1}, it basically follows from the a priori estimates for
	smooth enough solutions of \eqref{system-tropical}. We shall only outline the main steps in the derivation of the a priori estimates. The first lemma gives the globla $L^2$ bound for $(u,v)$ and $(\pa_2u,\pa_2v)$.

	\begin{lemma}
		\label{lem1-TC}
		Assume $(u_0,b_0)$ satisfies the assumptions in Theorem \ref{thm1-TC}. Then the smooth solution $(u,b)$ of system \eqref{TC} with the initial data $(u_0,b_0)$ satisfies
		\begin{equation*}
		\begin{split}
		u,&b\in L^\infty(\R_+;L^2(\Omega)), \quad \pa_1u,\pa_1b\in L^2(\R_+;L^2(\Omega)),\\
		\pa_2u,&\pa_2b\in L^\infty(\R_+;L^2(\Omega)), \quad \pa_1\pa_2u,\pa_1\pa_2b\in L^2(\R_+;L^2(\Omega)).
		\end{split}
		\end{equation*}
	\end{lemma}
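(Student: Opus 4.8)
The plan is to imitate the two-step energy scheme of Proposition \ref{prop-2.1}, the essential new feature being that the opposite sign of the coupling term in \eqref{system-tropical} produces an extra cancellation which lets the argument close for data of \emph{arbitrary} size, with no smallness hypothesis.

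\emph{Step 1 ($L^2$ bound).} Taking the $L^2$ inner product of the two equations of \eqref{system-tropical} with $(u,v)$, the transport terms and the pressure/$\Phi$ terms vanish by $\nabla\cdot u=\nabla\cdot v=0$, while the two coupling contributions recombine through the divergence-free condition on $v$:
\begin{equation*}
-\int_{\Omega} v\cdot\nabla v\cdot u\,dxdy-\int_{\Omega} v\cdot\nabla u\cdot v\,dxdy=-\int_{\Omega} v\cdot\nabla(u\cdot v)\,dxdy=0.
\end{equation*}
This yields the energy identity $\frac12\frac{d}{dt}(\|u\|_{L^2}^2+\|v\|_{L^2}^2)+\nu\|\partial_1 u\|_{L^2}^2+\eta\|\partial_1 v\|_{L^2}^2=0$, hence $u,v\in L^\infty(\R_+;L^2)$ together with $\int_0^\infty\|\partial_1(u,v)(\tau)\|_{L^2}^2\,d\tau\le\frac{1}{2\overline{C}}\|(u_0,v_0)\|_{L^2}^2<\infty$ for all data. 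This time-integrable quantity is what will drive the Gr\"onwall step.

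\emph{Step 2 ($\partial_2$ bound).} I would apply $\partial_2$ to \eqref{system-tropical} and test with $(\partial_2 u,\partial_2 v)$, exactly as in \eqref{L2-pa_2ub-1}. The terms $u\cdot\nabla\partial_2(\cdot)$ and the pressures integrate to zero, and the two top-order coupling terms cancel by $\nabla\cdot v=0$,
\begin{equation*}
-\int_{\Omega} v\cdot\nabla\partial_2 v\cdot\partial_2 u\,dxdy-\int_{\Omega} v\cdot\nabla\partial_2 u\cdot\partial_2 v\,dxdy=-\int_{\Omega} v\cdot\nabla(\partial_2 u\cdot\partial_2 v)\,dxdy=0,
\end{equation*}
leaving the four trilinear terms $-\int_\Omega\partial_2 u\cdot\nabla u\cdot\partial_2 u$, $-\int_\Omega\partial_2 v\cdot\nabla v\cdot\partial_2 u$, $-\int_\Omega\partial_2 u\cdot\nabla v\cdot\partial_2 v$ and $-\int_\Omega\partial_2 v\cdot\nabla u\cdot\partial_2 v$. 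Each of these I would treat with the tools already assembled: the incompressibility relations $\partial_2 u^2=-\partial_1 u^1$, $\partial_2 v^2=-\partial_1 v^1$ of Lemma \ref{properties2} to trade a vertical for a horizontal derivative, the $x$-average splitting $f=\bar f+\widetilde f$ of \eqref{decomposition}, the anisotropic H\"older inequality, and the interpolation \eqref{interpolation} and Poincar\'e \eqref{Poincare} inequalities of Lemma \ref{properties1}.

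The decisive point --- and the place where \eqref{system-tropical} differs from \eqref{system-MHD} --- is the sign. In Proposition \ref{prop-2.1} the two coupling terms $A_2$ and $A_3$ enter with \emph{opposite} signs, which is what ultimately forces the quartic expression $\|\partial_2(u,b)\|_{L^2}^4\|\partial_1(u,b)\|_{L^2}^2$ in \eqref{L2-pa_2ub} and hence the smallness condition. Here the analogous terms enter with the \emph{same} sign, so that the dangerous averaged-mode contributions combine and the surviving bound is only \emph{quadratic} in $\|\partial_2(u,v)\|_{L^2}$. I therefore expect to reach
\begin{equation*}
\frac{d}{dt}\|\partial_2(u,v)\|_{L^2}^2+\overline{C}\|\partial_1\partial_2(u,v)\|_{L^2}^2\le C\|\partial_1(u,v)\|_{L^2}^2\,\|\partial_2(u,v)\|_{L^2}^2 .
\end{equation*}
Gr\"onwall's lemma, with the coefficient $C\|\partial_1(u,v)\|_{L^2}^2\in L^1(\R_+)$ from Step 1, then gives the uniform bound $\|\partial_2(u,v)(t)\|_{L^2}^2\le\|\partial_2(u_0,v_0)\|_{L^2}^2\exp\!\big(C\int_0^\infty\|\partial_1(u,v)\|_{L^2}^2\,d\tau\big)$ for \emph{every} $t$ and for data of any size, and integrating the inequality in time produces $\partial_1\partial_2 u,\partial_1\partial_2 v\in L^2(\R_+;L^2)$. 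The main obstacle is precisely the verification of this quadratic bound: one must check that, after the divergence-free substitutions and the average/remainder splitting, the common sign of the two coupling terms genuinely cancels the cubic and quartic contributions in $\|\partial_2(u,v)\|_{L^2}$ that appear in the MHD computation. This cancellation, absent in Proposition \ref{prop-2.1}, is exactly what removes the need for the smallness assumption (and, as noted in the remark following Theorem \ref{thm1-TC}, what makes the whole argument go through on $\R^2$ as well).
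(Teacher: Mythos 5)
Your proposal is correct and takes essentially the same route as the paper's proof: the same $L^2$ energy identity, the same cancellation of the top-order coupling terms $-\int_\Omega v\cdot\nabla\pa_2 v\cdot\pa_2 u\,dxdy-\int_\Omega v\cdot\nabla\pa_2 u\cdot\pa_2 v\,dxdy=0$, the same key observation that the two remaining same-sign terms (the paper's $J_1+J_2$) combine under the divergence-free substitutions so that the dangerous contributions cancel and only the quadratic bound survives, giving exactly the paper's inequality $\frac{d}{dt}\|\pa_2(u,v)\|_{L^2}^2+\overline{C}\|\pa_1\pa_2(u,v)\|_{L^2}^2\le C\|\pa_1(u,v)\|_{L^2}^2\|\pa_2(u,v)\|_{L^2}^2$, closed by Gr\"onwall with $\|\pa_1(u,v)\|_{L^2}^2\in L^1(\R_+)$. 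One minor attribution slip: in the MHD comparison the quartic terms forcing smallness come from $A_3$ and $A_4$ (which carry opposite signs), not from $A_2$ and $A_3$, but this does not affect your argument.
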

	
	Testing the equations \eqref{system-tropical}$_1$ and \eqref{system-tropical}$_2$ by $u$ and $v$, respectively, and adding them up, we can find
	\begin{equation}
	\label{L2-uv}
	\frac12\frac{d}{dt}(\|u(t)\|_{L^2}^2+\|v(t)\|_{L^2}^2)+\nu\|\pa_1u\|_{L^2}^2+\eta\|\pa_1v\|_{L^2}^2=0,
	\end{equation}
	where we have used the facts
	\begin{equation*}
	\int_{\Omega}(u\cdot\nabla u)\cdot u~dxdy=\int_{\Omega}(u\cdot\nabla v)\cdot v~dxdy=0,
	\end{equation*}
	and
	\begin{equation*}
	\int_{\Omega}(u\cdot\nabla u)\cdot u~dxdy=\int_{\Omega}(u\cdot\nabla v)\cdot v~dxdy=0.
	\end{equation*}
	Then we derive the $L^2$ estimate for $(\pa_2u,\pa_2v)$. Testing $(\pa_2u,\pa_2v)$ to the equations of $\pa_2u$ and $\pa_2v$, we have 
	\begin{equation}
	\label{L2-pa_2uv-1}
	\begin{split}
	&\quad\frac{1}{2} \frac{d}{d t}\left(\|\pa_2u(t)\|_{L^{2}}^{2}+\|\pa_2v(t)\|_{L^{2}}^{2}\right)+\nu\|\partial_{1}\pa_2 u\|_{2}^{2}+\eta\|\partial_{1}\pa_2 v\|_{2}^{2}\\
	&=-\int_{\Omega}\pa_2u\cdot\nabla u\cdot\pa_2 u~dxdy-\int_{\Omega}\pa_2b\cdot\nabla b\cdot\pa_2 u~dxdy-\int_{\Omega}b\cdot\nabla\pa_2b\cdot\pa_2u~dxdy\\
	&\quad-\int_{\Omega}\pa_2u\cdot\nabla b\cdot\pa_2 b~dxdy-\int_{\Omega}\pa_2b\cdot\nabla u\cdot\pa_2 b~dxdy-\int_{\Omega}b\cdot\nabla\pa_2u\cdot\pa_2b~dxdy\\
	&\triangleq I_1+I_2+I_3+J_1+J_2+J_3.
	\end{split}
	\end{equation}
	According to the divergence-free condition of $v$, after integration by parts, we can obtain 
	\begin{equation*}
	\begin{split}
	I_3+J_3=-\int_{\Omega}b\cdot\nabla\pa_2b\cdot\pa_2u~dxdy-\int_{\Omega}b\cdot\nabla\pa_2u\cdot\pa_2b~dxdy=0.
	\end{split}
	\end{equation*}
	For $I_{1},$ we can write it as
	\begin{align}
	I_{1} &=-\int_{\Omega} \partial_{2} u \cdot \nabla u \cdot \partial_{2} u d x d y \notag\\
	&=-\int_{\Omega} \partial_{2} u^{1} \partial_{1} u^{1} \partial_{2} u^{1} d x d y-\int_{\Omega} \partial_{2} u^{1} \partial_{1} u^{2} \partial_{2} u^{2} d x d y \notag\\
	&\quad-\int_{\Omega} \partial_{2} u^{2} \pa_{2} u^{1} \partial_{2} u^{1} d x d y-\int_{\Omega} \partial_{2} u^{2} \partial_{2} u^{2} \partial_{2} u^{2} d x d y \notag\\
	&=-\int_{\Omega} \partial_{2} u^{1} \partial_{1} u^{2} \partial_{2} u^{2} d x d y-\int_{\Omega} \partial_{2} u^{2} \partial_2 u^{2} \partial_2 u^{2} d x d y.
	\end{align}
	Similar as $A_1$ in section 3, we can bound $I_1$ by 
	\begin{equation} 
	\label{I-1}
	\begin{split}
	I_{1}  \leq C\left\|\partial_{2} u\right\|_{L^2}^{2}\left\|\partial_{1} u\right\|_{L^2}^{2}+\varepsilon\left\|\partial_{1} \partial_{2} u\right\|_{L^2}^{2}.
	\end{split} 
	\end{equation}
	Then we estimate $I_2$, according to the divergence-free condition of $v$,
	\begin{align}
	I_{2}&= -\int_{\Omega} \pa_2 v \cdot \nabla v \cdot\partial_2 u  d x d y \notag\\
	&= -\int_{\Omega} \partial_{2} v^{1} \partial_{1} v^{1} \partial_{2} u^{1} d x d y-\int_{\Omega} \partial_{2} v^{1} \partial_{1} v^{2} \partial_{2} u^{2} d x d y \notag\\
	&\quad-\int_{\Omega} \partial_{2} v^{2} \pa_2v^1 \partial_{2} u^{1} d {x} d y-\int_{\Omega} \partial_{2} v^{2} \partial_{2} v^{2} \partial_{2} u^{2} d x d y \notag\\
	&=-\int_{\Omega} \partial_{2} v^{1} \partial_{1} v^{2} \partial_{2} u^{2} d x d y-\int_{\Omega} \partial_{2} v^{2} \partial_{2} v^{2} \partial_{2} u^{2} d x d y .
	\end{align}
	Similar as $A_2$ in section 3, $I_2$ can be bounded by
	\begin{equation}
	\label{I-2} 
	\begin{split}
	I_{2}  \leq C\left\|\partial_{2} b\right\|_{L^2}^{2}\left\|\partial_{1} (u,b)\right\|_{L^2}^{2}+\varepsilon\left\|\partial_{1} \partial_{2} (u,b)\right\|_{L^2}^{2}.
	\end{split} 
	\end{equation}
	Finally we estimate $J_1$ and $J_2$, we decompose them as
	$$J_1=\int_{\Omega} -\partial_2 u^1\partial_1 v^1\partial_2 v^1-\partial_2 u^1\partial_1 v^2\partial_2 v^2-\partial_2 u^2\partial_2 v^1\partial_2 v^1-\partial_2 u^2\partial_2 v^2\partial_2 v^2~dxdy,$$
	$$J_2=\int_{\Omega} -\partial_2 v^1\partial_1 u^1\partial_2 v^1-\partial_2 v^1\partial_1 u^2\partial_2 v^2-\partial_2 v^2\partial_2 u^1\partial_2 v^1-\partial_2 v^2\partial_2 u^2\partial_2 v^2~dxdy.$$
	Adding them up and using the fact that $\text{div\,}(u,v)=0$, we get
	$$J_1+J_2=\int_{\Omega} -\partial_2 u^1\partial_1 v^2\cdot\partial_2 v^2-\partial_2 v^1\partial_1 u^2\cdot\partial_2 v^2-2\partial_2 u^2\partial_2 v^2\cdot\partial_2 v^2~dxdy.$$
	Also making use of the method which we used to bound $A_3$ and $A_4$ in section 3, we have 
	$$J_1+J_2\leq C\|\partial_1 (u,v)\|_{L^2}^2\|\partial_2  (u,v)\|_{L^2}^2+\varepsilon\|\partial_1\partial_2 (u,v)\|_{L^2}^2.$$
	Inserting the estimates of $I_1$, $I_2$, $J_1$, $J_2$ into \eqref{L2-pa_2uv-1}, and choosing $\varepsilon$ small enough, we have
	\begin{equation}
	\label{L2-pa_2uv-2}
	\begin{split}
	\frac{d}{d t}\|\pa_2(u,v)(t)\|_{L^{2}}^{2}+\|\partial_{1}\pa_2 (u,v)\|_{2}^{2}\leq C\|\partial_1 (u,v)\|_{L^2}^2\|\partial_2  (u,v)\|_{L^2}^2.
	\end{split}
	\end{equation}
	Then by Gr\"onwall inequality and combining with the estimate \eqref{L2-uv}, we can obtain the desired bound for $(\pa_2u,\pa_2b)$, which completes the proof of this lemma.  
	
	In the $H^s~(s\geq2)$ estimate of $(u,v)$, the main difficulty is the control of the nonlinear term such as $(v\cdot\nabla u, v)_{H^s}$ without
	using vertical derivatives of $u$ and $v$. The following lemma is the key argument in the derivation of $H^s$ bound.
	
	\begin{lemma}
		\label{lemma8.2}
		Let $s\geq1$ be a real number. There exists a positive constant $C$ such that, for any divergence-free vector fields $f,g,h$ which satisfy $f,g,h\in H^s$ and $\pa_1f,\pa_1g\in H^s$, 	
		\begin{equation*}
		\begin{split}
		&\quad-\int_{\Omega}\Delta_q(f\cdot\nabla g)\cdot\Delta_q h~dxdy\\
		&\leq C2^{-2qs}b_q\big\{(\|f\|_{L^2}+\|\nabla f^1\|_{L^2})(\|\pa_1g\|_{H^s}\|h\|_{H^s}+\|g\|_{H^s}\|h\|_{H^s})+\|\nabla f^2\|_{L^2}\|g\|_{H^s}\|h\|_{H^s}\\
		&\quad+\|\nabla f^2\|_{L^2}^{\frac12}\|\pa_2\nabla f^2\|_{L^2}^{\frac12}\|g\|_{H^s}^{\frac12}\|\pa_1g\|_{H^s}^{\frac12}\|h\|_{H^s}+\|\pa_2g\|_{L^2}\|\pa_1f\|_{H^s}\|h\|_{H^s}\big\}\\
		&\quad-\int_{\Omega}S_qf\cdot\nabla\Delta_qg\cdot\Delta_q h~dxdy.
		\end{split}
		\end{equation*}
	\end{lemma}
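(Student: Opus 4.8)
The plan is to follow the same route as the proof of Lemma \ref{three-linear term} in the appendix, adapting it to the cleaner right-hand side prescribed here. Since $\nabla\cdot f=0$, I first rewrite the convection term componentwise as $f\cdot\nabla g=f^j\pa_j g=\pa_j(f^j g)$ and apply Bony's paraproduct decomposition to each product, splitting $f^j\,\pa_j g$ into $T_{f^j}\pa_j g+T_{\pa_j g}f^j+R(f^j,\pa_j g)$. Localizing with $\Delta_q$ and using the support properties of the dyadic blocks, only finitely many frequency interactions survive in each paraproduct. The central step is to isolate from $\Delta_q T_{f^j}\pa_j g$ its diagonal contribution, which to leading order equals $S_qf\cdot\nabla\Delta_q g$; this is precisely the term left unestimated on the right-hand side of the statement, and it will later be controlled via the divergence-free structure exactly as the analogous term in the $H^s$ estimate. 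Everything else is collected into the commutator
\[
\big[\Delta_q,\,S_{k-1}f^j\big]\pa_j\Delta_k g+\big(S_{k-1}f^j-S_qf^j\big)\pa_j\Delta_q\Delta_k g,
\]
together with the low-high piece $\Delta_q T_{\pa_j g}f^j$ and the high-high remainder $\Delta_q\pa_j R(f^j,\pa_j g)$.

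Next I would estimate each of these pieces. For the commutator I use the standard mean-value/Bernstein argument: the kernel of $\Delta_q$ supplies a factor $2^{-q}$ that compensates the derivative $\pa_j$, so that the commutator is bounded in $L^2$ by quantities of the type $\|\nabla S_{k-1}f\|\,\|\Delta_k g\|_{L^2}$, refined anisotropically. The essential subtlety, and the reason the hypotheses single out $f^1$ and $f^2$ separately, is that only horizontal derivatives may be absorbed into the dissipation, so I must arrange every term so that a vertical derivative never lands on a factor measured in an $H^s$-norm carrying $\pa_1$. Concretely, I split $f=(f^1,f^2)$, estimate the contributions of $f^1$ through $\|f\|_{L^2}+\|\nabla f^1\|_{L^2}$, and handle $f^2$ by the anisotropic H\"older inequality combined with the one-dimensional interpolation and Poincar\'e inequalities of Lemma \ref{properties1}. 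This is what produces the mixed factor $\|\nabla f^2\|_{L^2}^{1/2}\|\pa_2\nabla f^2\|_{L^2}^{1/2}$: taking the $L^\infty_x$ norm of a horizontal factor costs half a horizontal derivative, while the divergence constraint $\pa_1f^1=-\pa_2f^2$ lets me trade the remaining vertical derivative of $f^2$ for a horizontal one where needed.

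For the low-high term $\Delta_q T_{\pa_j g}f^j$ and the high-high remainder I proceed similarly, placing the high-frequency factor in $L^2$ (with the gain $2^{-qs}$ extracted from the $H^s$-norm) and the low-frequency factor in $L^\infty$ via Bernstein's inequality, then balancing integrability in the mixed spaces $L^p_xL^q_y$. The term $\|\pa_2 g\|_{L^2}\|\pa_1 f\|_{H^s}\|h\|_{H^s}$ arises precisely from the interactions where the derivative $\pa_2$ is forced onto $g$: keeping $\pa_2 g$ in $L^2$ and spending the horizontal smoothing on $f$ is the anisotropic bookkeeping that avoids any vertical dissipation. After pairing the dyadic estimates against $\Delta_q h$ by Cauchy--Schwarz and applying Young's inequality to distribute the exponents, the prefactors $2^{2qs}$ recombine into the $H^s$-norms and the summation over $q$ is absorbed into a single generic $\ell^1$ sequence $\{b_q\}$, yielding the factor $2^{-2qs}b_q$.

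The hardest part will be the careful anisotropic accounting in the commutator and remainder estimates: unlike the isotropic case, I cannot simply invoke $\|\nabla S_{k-1}f\|_{L^\infty}$, since an isotropic $L^\infty$ bound would require vertical derivatives that are not dissipated. Instead each factor must be placed in exactly the right mixed Lebesgue space so that vertical derivatives fall only on $L^2$-controlled quantities while all ``costly'' derivatives remain horizontal. Verifying that every one of the finitely many paraproduct interactions can be organized in this way, and that the resulting bounds all collapse into the five prescribed terms on the right-hand side, is the technical heart of the argument; the remaining recombination into $H^s$-norms and summation in $q$ is then routine.
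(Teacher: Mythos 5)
Your proposal is correct and takes essentially the same approach as the paper: the paper offers no separate proof of Lemma \ref{lemma8.2}, stating only that it ``can be proved along the similar way as the proof of Lemma \ref{three-linear term} in the Appendix A or the proof of Lemma 2.5 in \cite{PZ20}'', and your plan --- Bony decomposition, isolating the diagonal term $S_qf\cdot\nabla\Delta_qg$ as the unestimated leftover, the commutator estimate with its $2^{-q}$ gain, and the anisotropic H\"older/Bernstein/interpolation bookkeeping using the divergence-free trade $\pa_2f^2=-\pa_1f^1$ --- is precisely that adaptation. The only point to keep in mind when executing it is that here $h$ is assumed only in $H^s$ (no $\pa_1h$ appears on the right-hand side), so the steps where the appendix proof uses Poincar\'e to transfer $\pa_1$ onto $\Delta_q\widetilde{h}$ must instead keep $\Delta_qh$ in plain $L^2$ and pay the frequency cost on the other factors via Bernstein, which your outline already accommodates.
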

	Lemma \ref{lemma8.2} can be proved along the similar way as the proof of Lemma \ref{three-linear term} in the Appendix A or the proof of Lemma 2.5 in \cite{PZ20}, which used the anisotropic idea in the commutator estimate.
	
	Then we give the $H^s$ estimate for $(u,v)$. Applying $\Delta_q$ to the first and second equation of $\eqref{system-tropical}$, taking the $L^2$ inner product	of
	the resulting equality with $(\Delta_q u, \Delta_q v)$, after integrating by part we obtain
	\begin{align}
	\label{Hs-uv}
	&\quad\frac{1}{2} \frac{d}{d t}\left(\|\Delta_qu(t)\|_{L^{2}}^{2}+\|\Delta_qv(t)\|_{L^{2}}^{2}\right)+\nu\|\partial_{1}\Delta_q u\|_{2}^{2}+\eta\|\partial_{1}\Delta_q v\|_{2}^{2}\notag\\
	&=-\int_{\Omega}\Delta_q(u\cdot\nabla u)\cdot\Delta_q u~dxdy-\int_{\Omega}\Delta_q(v\cdot\nabla v)\cdot\Delta_q u~dxdy\notag\\
	&\quad-\int_{\Omega}\Delta_q(u\cdot\nabla v)\cdot\Delta_q v~dxdy-\int_{\Omega}\Delta_q(v\cdot\nabla u)\cdot\Delta_q v~dxdy\notag\\
	&\triangleq B_1+B_2+B_3+B_4.
	\end{align}
	According to Lemma \ref{lemma8.2} and the divergence-free condition of $u$, $B_1$ can be bounded by
	\begin{equation*}
	\begin{split}
	B_1&=-\int_{\Omega}\Delta_q(u\cdot\nabla u)\cdot\Delta_q u~dxdy\\
	&\leq C2^{-2qs}b_q\big\{(\|u\|_{L^2}+\|\nabla u^1\|_{L^2}+\|\pa_2u\|_{L^2})(\|\pa_1u\|_{H^s}\|u\|_{H^s}+\|u\|_{H^s}^2)\\
	&\quad+\|\nabla u^2\|_{L^2}\|u\|_{H^s}^2+\|\nabla u^2\|_{L^2}^{\frac12}\|\pa_2\nabla u^2\|_{L^2}^{\frac12}\|u\|_{H^s}^{\frac32}\|\pa_1u\|_{H^s}^{\frac12}\big\},
	\end{split}
	\end{equation*}
	and $B_3$ can be bounded by
	\begin{equation*}
	\begin{split}
	B_3&=-\int_{\Omega}\Delta_q(u\cdot\nabla v)\cdot\Delta_q v~dxdy\\
	&\leq C2^{-2qs}b_q\big\{(\|u\|_{L^2}+\|\nabla u^1\|_{L^2})(\|\pa_1v\|_{H^s}\|v\|_{H^s}+\|v\|_{H^s}^2)+\|\nabla u^2\|_{L^2}\|v\|_{H^s}^2\\
	&\quad+\|\nabla u^2\|_{L^2}^{\frac12}\|\pa_2\nabla u^2\|_{L^2}^{\frac12}\|v\|_{H^s}^{\frac32}\|\pa_1v\|_{H^s}^{\frac12}+\|\pa_2v\|_{L^2}\|\pa_1u\|_{H^s}\|v\|_{H^s}\big\}.
	\end{split}
	\end{equation*}
	Similarly, according to the divergence-free condition of $v$, $B_2$ and $B_4$ can be handled by
	\begin{equation*}
	\begin{split}
	B_2+B_4&=-\int_{\Omega}\Delta_q(v\cdot\nabla v)\cdot\Delta_q u~dxdy-\int_{\Omega}\Delta_q(v\cdot\nabla u)\cdot\Delta_q v~dxdy\\
	&\leq C2^{-2qs}b_q\big\{(\|v\|_{L^2}+\|\nabla v^1\|_{L^2})(\|\pa_1v\|_{H^s}\|u\|_{H^s}+\|\pa_1u\|_{H^s}\|v\|_{H^s}+\|v\|_{H^s}\|u\|_{H^s})\\
	&\quad+\|\nabla v^2\|_{L^2}\|v\|_{H^s}\|u\|_{H^s}+\|\pa_2v\|_{L^2}\|\pa_1v\|_{H^s}\|u\|_{H^s}+\|\pa_2u\|_{L^2}\|\pa_1v\|_{H^s}\|v\|_{H^s}\\
	&\quad+\|\nabla v^2\|_{L^2}^{\frac12}\|\pa_2\nabla v^2\|_{L^2}^{\frac12}\big(\|v\|_{H^s}^{\frac12}\|\pa_1v\|_{H^s}^{\frac12}\|u\|_{H^s}+\|u\|_{H^s}^{\frac12}\|\pa_1u\|_{H^s}^{\frac12}\|v\|_{H^s}\big)\big\}.
	\end{split}
	\end{equation*}
	Inserting the estimates of $B_1-B_4$ into \eqref{Hs-uv}, multiplying the resulting inequality by $2^{2qs}$
	and taking summation in $q$, combining with the global $L^2$ bound of $(u,v)$ and $(\pa_1u,\pa_1v)$, one can deduce
	\begin{equation*}
	\label{Hs-uv-1}
	\begin{split}
	&\quad\frac{1}{2} \frac{d}{d t}\left(\|u(t)\|_{H^s}^{2}+\|v(t)\|_{H^s}^{2}\right)+\nu\|\partial_{1} u\|_{H^s}^{2}+\eta\|\partial_{1} v\|_{H^s}^{2}\\
	& \leq 
	C(\|\pa_1u\|_{H^s}\|u\|_{H^s}+\|\pa_1v\|_{H^s}\|v\|_{H^s}+\|\pa_1v\|_{H^s}\|u\|_{H^s}+\|\pa_1u\|_{H^s}\|v\|_{H^s}+\|v\|_{H^s}^2+\|u\|_{H^s}^2)\\
	&\quad+C(\|\nabla u^2\|_{L^2}+\|\nabla v^2\|_{L^2})(\|u\|_{H^s}^2+\|v\|_{H^s}^2)+C(\|\nabla u^2\|_{L^2}^{\frac12}\|\pa_2\nabla u^2\|_{L^2}^{\frac12}+\|\nabla v^2\|_{L^2}^{\frac12}\|\pa_2\nabla v^2\|_{L^2}^{\frac12})\\
	&\quad\times\big(\|u\|_{H^s}^{\frac32}\|\pa_1u\|_{H^s}^{\frac12}+\|v\|_{H^s}^{\frac32}\|\pa_1v\|_{H^s}^{\frac12}+\|v\|_{H^s}^{\frac12}\|\pa_1v\|_{H^s}^{\frac12}\|u\|_{H^s}+\|u\|_{H^s}^{\frac12}\|\pa_1u\|_{H^s}^{\frac12}\|v\|_{H^s}\big)\big\}\\
	&\leq C(1+\|\nabla u^2\|_{L^2}+\|\nabla v^2\|_{L^2}+\|\nabla u^2\|_{L^2}^{\frac23}\|\pa_2\nabla u^2\|_{L^2}^{\frac23}+\|\nabla v^2\|_{L^2}^{\frac23}\|\pa_2\nabla v^2\|_{L^2}^{\frac23})\\
	&\quad\times(\|u\|_{H^s}^2+\|v\|_{H^s}^2)+
	\frac{\nu}{2}\|\pa_1u\|_{L^2}^2+\frac{\eta}{2}\|\pa_1v\|_{L^2}^2,
	\end{split}
	\end{equation*}
	where we have used the Young's inequality in the last step.\\
	Noticing that 
	$$\|\nabla u^2\|_{L^2}\leq\|\pa_1 u^2\|_{L^2}+\|\pa_2 u^2\|_{L^2}\leq \|\pa_1 u^2\|_{L^2}+\|\pa_1 u^1\|_{L^2}\leq C\|\pa_1 u\|_{L^2},$$
	$$\|\nabla v^2\|_{L^2}\leq\|\pa_1 v^2\|_{L^2}+\|\pa_2 v^2\|_{L^2}\leq \|\pa_1 v^2\|_{L^2}+\|\pa_1 v^1\|_{L^2}\leq C\|\pa_1 v\|_{L^2},$$
	and similarly 
	$$\|\pa_2\nabla u^2\|_{L^2}\leq C\|\pa_1\pa_2 u\|_{L^2},\quad\|\pa_2\nabla v^2\|_{L^2}\leq C\|\pa_1\pa_2 v\|_{L^2}.$$
	Then using Gr\"onwall's Lemma and combining with the results $\pa_1u,\pa_1b\in L^2(\R_+;L^2(\Omega))$ and $\pa_1\pa_2u,\pa_1\pa_2b\in L^2(\R_+;L^2(\Omega))$, we can obtain
	\begin{equation*}
	\|u(t)\|_{H^s}^{2}+\|v(t)\|_{H^s}^{2}+\nu\int_0^t\|\partial_{1} u(\tau)\|_{H^s}^{2}~d\tau+\eta\int_0^t\|\partial_{1} v(\tau)\|_{H^s}^{2}~d\tau\leq C(t),
	\end{equation*}
	for ant $t>0$. With this global $H^s~ (s\geq2)$ bound, it is enough to obtain the existence and uniqueness result for system \eqref{system-tropical} using the same way as section 5.

\end{section}

\begin{section}{Appendix}
This appendix provides the complete proof of Lemma \ref{three-linear term}.
\begin{proof}{Proof of Lemma \ref{three-linear term}}
\begin{equation*}
\begin{split}
-\int_{\Omega}\Delta_q(f\cdot \nabla g)\cdot\Delta_q h~dx&=-\int_{\Omega}\Delta_q(f^1\partial_1 g)\cdot\Delta_q h~dxdy
-\int_{\Omega}\Delta_q(f^2\partial_2 g)\cdot\Delta_q h~dxdy\\
&\triangleq P+Q.
\end{split}
\end{equation*}
For $P$, by Bony's decomposition, we can divide it into the following three terms,
\begin{align}
\label{eqlem21}
P=&-\int_{\Omega}\Delta_q(f^1 \partial_1 g)\cdot\Delta_q h~dxdy\notag\\
=&-\sum_{|k-q|\leq2}\int_{\Omega}\Delta_q(S_{k-1}f^1 \Delta_k\partial_1 g)\cdot\Delta_q h~dxdy\notag\\
&-\sum_{|k-q|\leq2}\int_{\Omega}\Delta_q(\Delta_kf^1 S_{k-1}\partial_1  g)\cdot\Delta_q h~dxdy\notag\\
&-\sum_{k\geq q-1}\sum_{|k-l|\leq1}\int_{\Omega}\Delta_q(\Delta_kf^1  \Delta_l\partial_1 g)\cdot\Delta_q h~dxdy\notag\\
\triangleq& P_1+P_2+P_3.
\end{align}
For $P_1$, using the decomposition \eqref{decomposition}, we can rewrite it as
\begin{equation*}
\begin{split}
P_1&=\sum_{|k-q | \leq 2} \int_{\Omega} \Delta_{q}\left(S_{k-1} \widetilde{f}^{1} \partial_{1} \Delta_{k} \widetilde{g}\right)\cdot \Delta_{q} \bar{h}~d x d y\\
&\quad+\sum_{|k-q | \leq 2} \int_{\Omega} \Delta_{q}\left(S_{k-1} \widetilde{f}^{1} \partial_{1} \Delta_{k} \widetilde{g}\right)\cdot \Delta_{q} \widetilde{h}~d x d y\\
&\quad+\sum_{|k-q | \leq 2} \int_{\Omega} \Delta_{q}\left(S_{k-1} \bar{f}^{1} \partial_{1} \Delta_{k} \widetilde{g}\right)\cdot \Delta_{q} \bar{h}~d x d y\\
&\quad+\sum_{|k-q | \leq 2} \int_{\Omega} \Delta_{q}\left(S_{k-1} \bar{f}^{1} \partial_{1} \Delta_{k} \widetilde{g}\right)\cdot \Delta_{q} \widetilde{h}~d x d y\\
&\triangleq P_{11}+P_{12}+P_{13}+P_{14}.
\end{split}
\end{equation*}
For $P_{11}$, by anisotropic H\"older inequality and Poincar\'e inequality \eqref{Poincare},
\begin{equation*}
\begin{aligned}
P_{11} &=\sum_{| k-q | \leq 2} \int_{\Omega}\Delta_q\left(S_{k-1} \tilde{f}^{1} \partial_{1} \Delta_{k} \widetilde{g}\right) \cdot\Delta_{q} \bar{h}~d x d y  \\
&\leq C \sum_{| k-q | \leq 2}\left\|S_{k-1} \widetilde{f}^{1}\right\|_{L^\infty_yL_{x}^{2}}\left\|\partial_{1} \Delta_k\widetilde{g}\right\|_{L^{2}} \left\|\Delta_{q} \bar{h}\right\|_{L_{y}^{2}} \\
& \leq C\sum_{| k-q | \leq 2}\left\|S_{k-1} \widetilde{f}^{1}\right\|_{L^2}^{\frac12}\left\|S_{k-1}\pa_2 \widetilde{f}^{1}\right\|_{L^2}^{\frac12}\left\|\partial_{1} \Delta_k\widetilde{g}\right\|_{L^{2}} \left\|\Delta_{q} \bar{h}\right\|_{L_{y}^{2}} \\
& \leq C\sum_{| k-q | \leq 2}\left\|\pa_1S_{k-1} \widetilde{f}^{1}\right\|_{L^2}^{\frac12}\left\|S_{k-1}\pa_1\pa_2 \widetilde{f}^{1}\right\|_{L^2}^{\frac12}\left\|\partial_{1} \Delta_k\widetilde{g}\right\|_{L^{2}} \left\|\Delta_{q} \bar{h}\right\|_{L_{y}^{2}} \\
& \leq C2^{-2qs}b_q\left\|\pa_1{f}\right\|_{L^2}^{\frac12}\left\|\pa_1\pa_2 {f}\right\|_{L^2}^{\frac12}
\left\|\partial_{1} {g}\right\|_{H^s} \left\| h\right\|_{H^s}.
\end{aligned}
\end{equation*}
For $P_{12}$, along the same method,
\begin{align*}
P_{12} &=\sum_{| k-q | \leq 2} \int_{\Omega}\Delta_q\left(S_{k-1} \tilde{f}^{1} \partial_{1} \Delta_{k} \widetilde{g}\right)\cdot \Delta_{q} \widetilde{h}~d x d y  \\
&\leq C \sum_{| k-q | \leq 2}\left\|S_{k-1} \widetilde{f}^{1}\right\|_{L^\infty_yL_{x}^{2}}\left\|\partial_{1} \Delta_k\widetilde{g}\right\|_{L^{2}} \left\|\Delta_{q} \widetilde{h}\right\|_{L^\infty_xL_{y}^{2}} \\
& \leq C\sum_{| k-q | \leq 2}\left\|S_{k-1} \widetilde{f}^{1}\right\|_{L^2}^{\frac12}\left\|S_{k-1}\pa_2 \widetilde{f}^{1}\right\|_{L^2}^{\frac12}\left\|\partial_{1} \Delta_k\widetilde{g}\right\|_{L^{2}} \left\|\Delta_{q} \widetilde{h}\right\|_{L^{2}}^{\frac12}\left\|\Delta_{q}\pa_1 \widetilde{h}\right\|_{L^{2}}^{\frac12} \\
& \leq C\sum_{| k-q | \leq 2}\left\|S_{k-1} \widetilde{f}^{1}\right\|_{L^2}^{\frac12}\left\|S_{k-1}\pa_1\pa_2 \widetilde{f}^{1}\right\|_{L^2}^{\frac12}\left\|\partial_{1} \Delta_k\widetilde{g}\right\|_{L^{2}} \left\|\Delta_{q} \widetilde{h}\right\|_{L^{2}}^{\frac12}\left\|\Delta_{q}\pa_1 \widetilde{h}\right\|_{L^{2}}^{\frac12}  \\
& \leq C2^{-2qs}b_q\left\|{f}\right\|_{L^2}^{\frac12}\left\|\pa_1\pa_2 {f}\right\|_{L^2}^{\frac12}
\left\|\partial_{1} {g}\right\|_{H^s} \left\| h\right\|_{H^s}^{\frac12}\left\|\pa_1 h\right\|_{H^s}^{\frac12}.
\end{align*}
According to the duality property of the operator $\Delta_q$, 
\begin{equation*}
\begin{split}
P_{13}&=\sum_{|k-q | \leq 2} \int_{\Omega} \Delta_{q}\left(S_{k-1} \bar{f}^{1} \partial_{1} \Delta_{k} \widetilde{g}\right)\cdot \Delta_{q} \bar{h}~d x d y\\
&=\sum_{|k-q | \leq 2} \int_{\Omega} \left(S_{k-1} \bar{f}^{1} \partial_{1} \Delta_{k} \widetilde{g}\right)\cdot \Delta_{q}^2 \bar{h}~ d x d y\\
&=\sum_{|k-q | \leq 2} \int_{\R}S_{k-1} \bar{f}^{1}\Delta_{q}^2 \bar{h}\cdot \bigg(\int_{\mathbb{T}} \partial_{1} \Delta_{k} \widetilde{g}(x,y)~d x\bigg)~d y\\
&=0.
\end{split}
\end{equation*}
Also by anisotropic H\"older inequality, interpolation \eqref{interpolation} and Poincar\'e inequality \eqref{Poincare},
\begin{equation*}
\begin{split}
P_{14} &=\sum_{| k-q | \leq 2} \int_{\Omega}\Delta_q\left(S_{k-1} \bar{f}^{1} \partial_{1} \Delta_{k} \widetilde{g}\right)\cdot \Delta_{q} \widetilde{h}~d x d y  \\
&\leq C \sum_{| k-q | \leq 2}\left\|S_{k-1} \bar{f}^{1}\right\|_{L^\infty_y}\left\|\partial_{1} \Delta_k\widetilde{g}\right\|_{L^{2}} \left\|\Delta_{q} \widetilde{h}\right\|_{L^{2}} \\
& \leq C\sum_{| k-q | \leq 2}\left\|S_{k-1} \bar{f}^{1}\right\|_{L^2}^{\frac12}\left\|S_{k-1}\pa_2 \bar{f}^{1}\right\|_{L^2}^{\frac12}\left\|\partial_{1} \Delta_k\widetilde{g}\right\|_{L^{2}} \left\|\pa_1\Delta_{q} \widetilde{h}\right\|_{L_2} \\
& \leq C2^{-2qs}b_q\left\|{f}\right\|_{L^2}^{\frac12}\left\|\pa_2 {f}\right\|_{L^2}^{\frac12}
\left\|\partial_{1} {g}\right\|_{H^s} \left\| \pa_1h\right\|_{H^s}.
\end{split}
\end{equation*}
Combining the estimates of $P_{11}-P_{14}$, we obtain the estimate for $P_1$ that,
\begin{equation}
\begin{aligned}
P_{1} 
& \leq 
C2^{-2qs}b_q(\left\|\pa_1{f}\right\|_{L^2}\left\|\pa_1\pa_2 {f}\right\|_{L^2}
 \left\| h\right\|_{H^s}^2
 +\left\|{f}\right\|_{L^2}^2\left\|\pa_1\pa_2 {f}\right\|_{L^2}^2
\left\| h\right\|_{H^s}^2 )\\
&\quad+C\varepsilon_02^{-2qs}b_q(\left\|\partial_{1} {g}\right\|_{H^s}^2+ \left\|\pa_1 h\right\|_{H^s}^2)\\
&\quad+2^{-2qs}b_q\left\|{f}\right\|_{L^2}^{\frac12}\left\|\pa_2 {f}\right\|_{L^2}^{\frac12}
(\left\|\partial_{1} {g}\right\|_{H^s}^2+ \left\| \pa_1h\right\|_{H^s}^2).
\end{aligned}
\end{equation} 
Then we estimate $P_2$, also by the decomposition \eqref{decomposition}, we can write $P_2$ into the following three terms,
\begin{align*}
P_2&=-\sum_{|k-q|\leq 2}\int_{\Omega}\Delta_q(\Delta_kf^1 S_{k-1}\partial_1  g)\cdot\Delta_q h~dxdy\\
&=-\sum_{|k-q | \leq 2} \int_{\Omega} \Delta_{q}\left( \Delta_{k}\widetilde{f}^{1} \partial_{1} S_{k-1} {g}\right)\cdot \Delta_{q} {h}~ d x d y\\
&\quad-\sum_{|k-q | \leq 2} \int_{\Omega} \Delta_{q}\left( \Delta_{k}\bar{f}^{1} \partial_{1} S_{k-1} \widetilde{g}\right)\cdot \Delta_{q} \bar{h}~ d x d y\\
&\quad-\sum_{|k-q | \leq 2} \int_{\Omega} \Delta_{q}\left( \Delta_{k}\bar{f}^{1} \partial_{1} S_{k-1} \widetilde{g}\right)\cdot \Delta_{q} \widetilde{h} ~d x d y\\
&\triangleq P_{21}+P_{22}+P_{23}.
\end{align*}
We write that owing to anisotropic H\"older inequality, interpolation \eqref{interpolation} and Poincar\'e inequality \eqref{Poincare},
\begin{equation*}
\begin{aligned}
P_{21}
&\leq C \sum_{| k-q | \leq 2}\left\|\Delta_k \widetilde{f}^{1}\right\|_{L^\infty_xL_{y}^{2}}\left\|\partial_{1} S_{k-1}{g}\right\|_{L^{\infty}_yL^2_x} \left\|\Delta_{q} {h}\right\|_{L^{2}} \\
& \leq C\sum_{| k-q | \leq 2}\left\|\Delta_k \widetilde{f}^{1}\right\|_{L^2}^{\frac12}\left\|\pa_1\Delta_k \widetilde{f}^{1}\right\|_{L^2}^{\frac12}\left\|\partial_{1} S_{k-1}{g}\right\|_{L^{2}}^{\frac12}\left\|\partial_{1}\pa_2 S_{k-1}{g}\right\|_{L^2}^{\frac12}  \left\|\Delta_{q} {h}\right\|_{L^{2}} \\
& \leq C\sum_{| k-q | \leq 2}
\left\|\partial_{1} {g}\right\|_{L^{2}}^{\frac12}\left\|\partial_{1}\pa_2 {g}\right\|_{L^2}^{\frac12}\left\|\pa_1\Delta_k \widetilde{f}^{1}\right\|_{L^2}
 \left\|\Delta_{q} {h}\right\|_{L^{2}}\\ 
& \leq C2^{-2qs}b_q\left\|\pa_1{g}\right\|_{L^2}^{\frac12}\left\|\pa_1\pa_2 {g}\right\|_{L^2}^{\frac12}
\left\|\partial_{1} {f}\right\|_{H^s} \left\| h\right\|_{H^s}.
\end{aligned}
\end{equation*}
Similar as $P_{13}$, it is easy to see that $P_{22}=0$. To bound $P_{23}$, we write that
\begin{align*}
P_{23} &=\sum_{| k-q | \leq 2} \int_{\Omega}\Delta_q\left( \Delta_{k}\bar{f}^{1} S_{k-1}\partial_{1}  \widetilde{g}\right)\cdot\Delta_{q} \widetilde{h}~d x d y  \\
&\leq C \sum_{| k-q | \leq 2}\left\|\Delta_k \bar{f}^{1}\right\|_{L^2_y}\left\|\partial_{1} S_{k-1}\widetilde{g}\right\|_{L^{\infty}_yL^{2}_x} \left\|\Delta_{q} \widetilde{h}\right\|_{L^{2}} \\
& \leq C\sum_{| k-q | \leq 2}\left\|\Delta_k \bar{f}^{1}\right\|_{L^2_y}\left\|\partial_{1} S_{k-1}\widetilde{g}\right\|_{L^2}^{\frac12}\left\|\partial_{1}\pa_2 S_{k-1}\widetilde{g}\right\|_{L^{2}}^{\frac12} \left\|\pa_1\Delta_{q} \widetilde{h}\right\|_{L^2} \\
& \leq C2^{-2qs}b_q\left\|{\pa_1g}\right\|_{L^2}^{\frac12}\left\|\pa_1\pa_2 {g}\right\|_{L^2}^{\frac12}
\left\| {f}\right\|_{H^s} \left\| \pa_1h\right\|_{H^s}.
\end{align*} 
Making use of Young's inequality, we deduce the estimate of $P_2$ that 
\begin{equation*}
\begin{split}
P_{2} 
& \leq
C2^{-2qs}b_q\left\|\pa_1{g}\right\|_{L^2}\left\|\pa_1\pa_2 {g}\right\|_{L^2}(
\left\| {f}\right\|_{H^s}^2+ \left\| h\right\|_{H^s}^2)+
 C\varepsilon_02^{-2qs}b_q(
\left\| \pa_1{f}\right\|_{H^s}^2+ \left\| \pa_1h\right\|_{H^s}^2).
\end{split}
\end{equation*} 
For $P_3$, first we use the decomposition \eqref{decomposition},
\begin{align*}
P_3&=-\sum_{k\geq q-1}\sum_{|k-l|\leq1}\int_{\Omega}\Delta_q(\Delta_kf^1  \Delta_l\partial_1 g)\cdot\Delta_q h~dx dy\\
&=-\sum_{k\geq q-1}\sum_{|k-l|\leq1} \int_{\Omega} \Delta_{q}\left( \Delta_{k}\widetilde{f}^{1} \partial_{1} \Delta_l {g}\right)\cdot \Delta_{q} {h}~d x d y\\
&\quad-\sum_{k\geq q-1}\sum_{|k-l|\leq1} \int_{\Omega} \Delta_{q}\left( \Delta_{k}\bar{f}^{1} \partial_{1} \Delta_l \widetilde{g}\right)\cdot \Delta_{q} \bar{h}~d x d y\\
&\quad-\sum_{k\geq q-1}\sum_{|k-l|\leq1} \int_{\Omega} \Delta_{q}\left( \Delta_{k}\bar{f}^{1} \partial_{1} \Delta_l \widetilde{g}\right)\cdot \Delta_{q} \widetilde{h}~ d x d y\\
&\triangleq P_{31}+P_{32}+P_{33}.
\end{align*}
The term $P_{31}$ can be handled in the same way as $P_{21}$ and utilizing Bernstein inequality,
\begin{equation*}
\begin{aligned}
P_{31}
&\leq C \sum_{k\geq q-1}\sum_{|k-l|\leq1}\left\|\Delta_k \widetilde{f}^{1}\right\|_{L^\infty_xL_{y}^{2}}\left\|\partial_{1} \Delta_l{g}\right\|_{L^2} \left\|\Delta_{q} {h}\right\|_{L^\infty_yL^{2}_x} \\
& \leq C\sum_{k\geq q-1}\sum_{|k-l|\leq1}\left\|\Delta_k \widetilde{f}^{1}\right\|_{L^2}^{\frac12}\left\|\pa_1\Delta_k \widetilde{f}^{1}\right\|_{L^2}^{\frac12}\left\|\partial_{1} \Delta_l{g}\right\|_{L^{2}}  \left\|\Delta_{q} {h}\right\|_{L^{2}}^{\frac12} \left\|\pa_2\Delta_{q} {h}\right\|_{L^{2}}^{\frac12}\\
& \leq C\sum_{k\geq q-1}2^{\frac q2}2^{-\frac k2}\left\|\pa_1\Delta_k \widetilde{f}^{1}\right\|_{L^2}^{\frac12}\left\|\pa_1\nabla\Delta_k \widetilde{f}^{1}\right\|_{L^2}^{\frac12}\left\|\partial_{1} \Delta_k{g}\right\|_{L^{2}}  \left\|\Delta_{q} {h}\right\|_{L^{2}} \\ 
& \leq C2^{-2qs}b_q\left\|\pa_1{f}\right\|_{L^2}^{\frac12}\left\|\pa_1\nabla {f}^1\right\|_{L^2}^{\frac12}
\left\|\partial_{1} {g}\right\|_{H^s} \left\| h\right\|_{H^s}\\
&\leq C2^{-2qs}b_q\left\|\pa_1{f}\right\|_{L^2}^{\frac12}\left\|\pa_1\pa_2{f}\right\|_{L^2}^{\frac12}
\left\|\partial_{1} {g}\right\|_{H^s} \left\| h\right\|_{H^s},
\end{aligned}
\end{equation*}
where we have used the relation
$\nabla f^1=(\pa_1f^1,\pa_2f^1)=(-\pa_2f^2,\pa_2f^1)$ in the last step.\\
Similar as $P_{13}$, it is easy to check that $P_{32}=0$. To bound $P_{33}$, we write that
\begin{equation*}
\begin{aligned}
P_{33}&=-\sum_{k\geq q-1}\sum_{|k-l|\leq1} \int_{\Omega} \Delta_{q}\left( \Delta_{k}\bar{f}^{1} \partial_{1} \Delta_l \widetilde{g}\right)\cdot \Delta_{q} \widetilde{h}~d x d y\\
&\leq C \sum_{k\geq q-1}\sum_{|k-l|\leq1}\left\|\Delta_k \bar{f}^{1}\right\|_{L_{y}^{2}}\left\|\partial_{1} \Delta_l\widetilde{g}\right\|_{L^2} \left\|\Delta_{q} \widetilde{h}\right\|_{L^\infty_yL^{2}_x} \\
&\leq C \sum_{k\geq q-1}\sum_{|k-l|\leq1}\left\|\Delta_k \bar{f}^{1}\right\|_{L_{y}^{2}}^{\frac12} 2^{-\frac k2}\left\|\nabla\Delta_k \bar{f}^{1}\right\|_{L_{y}^{2}}^{\frac12}
\left\|\partial_{1} \Delta_l\widetilde{g}\right\|_{L^2} 2^{\frac q2}\left\|\Delta_{q} \widetilde{h}\right\|_{L^2_yL^{2}_x} \\
&\leq C \sum_{k\geq q-1}\sum_{|k-l|\leq1}2^{\frac q2-\frac k2}\left\|\Delta_k \bar{f}^{1}\right\|_{L_{y}^{2}}^{\frac12} \left\|\pa_1\Delta_k \bar{f}^{1}\right\|_{L_{y}^{2}}^{\frac12}
\left\|\partial_{1} \Delta_l\widetilde{g}\right\|_{L^2} \left\|\Delta_{q} \widetilde{h}\right\|_{L^2}^{\frac12}\left\|\pa_1\Delta_{q} \widetilde{h}\right\|_{L^2}^{\frac12} \\
&\quad+C \sum_{k\geq q-1}\sum_{|k-l|\leq1}2^{\frac q2-\frac k2}\left\|\Delta_k \bar{f}^{1}\right\|_{L_{y}^{2}}^{\frac12} \left\|\pa_2\Delta_k \bar{f}^{1}\right\|_{L_{y}^{2}}^{\frac12}
\left\|\partial_{1} \Delta_l\widetilde{g}\right\|_{L^2} \left\|\pa_1\Delta_{q} \widetilde{h}\right\|_{L^2} \\
& \leq C2^{-2qs}b_q(\left\|{f}\right\|_{L^2}^{\frac12}\left\|\pa_1 {f}^1\right\|_{L^2}^{\frac12}
\left\|\partial_{1} {g}\right\|_{H^s} \left\| h\right\|_{H^s}^{\frac12}\left\| \pa_1h\right\|_{H^s}^{\frac12}+\left\|{f}\right\|_{L^2}^{\frac12}\left\|\pa_2 {f}^1\right\|_{L^2}^{\frac12}
\left\|\partial_{1} {g}\right\|_{H^s} \left\| \pa_1h\right\|_{H^s}).
\end{aligned}
\end{equation*}
Combining with Young's inequality, we obtain the estimate for $P_3$ that,
\begin{equation*}
\begin{aligned}
P_{3}& \leq C2^{-2qs}b_q(\left\|{f}\right\|_{L^2}^{2}\left\|\pa_1 {f}^1\right\|_{L^2}^{2}
 \left\| h\right\|_{H^s}^{2}+\left\|\pa_1{f}\right\|_{L^2}\left\|\pa_1\pa_2 {f}^1\right\|_{L^2}
 \left\| h\right\|_{H^s}^2)\\
 &\quad+C2^{-2qs}b_q\left\|{f}\right\|_{L^2}^{\frac12}\left\|\pa_2 {f}^1\right\|_{L^2}^{\frac12}
(\left\|\partial_{1} {g}\right\|_{H^s}^2+ \left\| \pa_1h\right\|_{H^s}^2)
+C\varepsilon_0 2^{-2qs}b_q(
\left\|\partial_{1} {g}\right\|_{H^s}^2+\left\| \pa_1h\right\|_{H^s}^2).
\end{aligned}
\end{equation*}
Next we estimate $Q$, first we divide it into three parts,
\begin{equation}
\label{eqlem31}
\begin{split}
-\int_{\R^2}\Delta_q(u^2 \partial_2 f)\cdot\Delta_q f~dxdy
=Q_1+Q_2+Q_3,
\end{split}
\end{equation}
with
\begin{equation*}
Q_1=-\sum_{|k-q|\leq2}\int_{\Omega}\Delta_q(S_{k-1}f^2 \Delta_k\partial_2 g)\cdot\Delta_q h~dxdy,
\end{equation*}
\begin{equation*}
Q_2=-\sum_{|k-q|\leq2}\int_{\Omega}\Delta_q(\Delta_kf^2 S_{k-1}\partial_2  g)\cdot\Delta_q h~dxdy
\end{equation*}
and
\begin{equation*}
Q_3=-\sum_{k\geq q-1}\sum_{|k-l|\leq1}\int_{\Omega}\Delta_q(\Delta_kf^2  \Delta_l\partial_2 g)\cdot\Delta_q h~dxdy.
\end{equation*}
Because $f$ satisfies the divergence-free condition and according to the property \eqref{u2=0}, we can rewrite $Q_1$ as
\begin{align*}
Q_1&=-\sum_{|k-q|\leq2}\int_{\Omega}\Delta_q(S_{k-1}\widetilde{f}^2 \Delta_k\partial_2 g)\cdot\Delta_q h~dxdy\\
&=-\sum_{|k-q|\leq2}\int_{\Omega}[\Delta_q, S_{k-1}\widetilde{f}^2\partial_2]\Delta_k g\cdot\Delta_q h~dxdy\\
&\quad-\sum_{|k-q|\leq2}\int_{\Omega}({ S_{k-1}\widetilde{f}^2-S_q\widetilde{f}^2})\partial_2\Delta_q \Delta_kg\cdot\Delta_q h~dxdy\\
&\quad-\int_{\Omega}{S}_{q}\widetilde{f}^2\partial_2\Delta_q g\cdot\Delta_q h~dxdy\\
&\triangleq Q_{11}+Q_{12}+Q_{13}.
\end{align*}
For $Q_{11}$, we decompose it into the following four terms,
\begin{align*}
Q_{11}
&=-\sum_{|k-q|\leq2}\int_{\Omega}[\Delta_q, S_{k-1}\widetilde{f}^2\partial_2]\Delta_k \bar{g} \cdot\Delta_q \bar{h}~dxdy\\
&\quad-\sum_{|k-q|\leq2}\int_{\Omega}[\Delta_q, S_{k-1}\widetilde{f}^2\partial_2]\Delta_k \bar{g} \cdot\Delta_q \widetilde{h}~dxdy\\
&\quad-\sum_{|k-q|\leq2}\int_{\Omega}[\Delta_q, S_{k-1}\widetilde{f}^2\partial_2]\Delta_k \widetilde{g} \cdot\Delta_q \bar{h}~dxdy\\
&\quad-\sum_{|k-q|\leq2}\int_{\Omega}[\Delta_q, S_{k-1}\widetilde{f}^2\partial_2]\Delta_k \widetilde{g} \cdot\Delta_q \widetilde{h}~dxdy,\\
&\triangleq Q_{111}+Q_{112}+Q_{113}+Q_{114},
\end{align*}
where $[X,Y]\triangleq XY-YX$ defining the standard commutator.\\
According to the definition of decomposition \eqref{decomposition}, it is not difficult to check that $Q_{111}=0$. For $Q_{112}$, according to the definition of $\Delta_q$,
\begin{equation*}
\begin{split}
[\Delta_q, S_{k-1}{\widetilde{f}^2\partial_2}]\Delta_k \bar{g}&
=\int_{\Omega}h_q(\mathbf{x}-\mathbf{x}')(S_{k-1}\widetilde{f}^2(\mathbf{x}')\partial_2\Delta_k \bar{g}(\mathbf{x}'))~d\mathbf{x}'\\
&\quad\quad-S_{k-1}\widetilde{f}^2(\mathbf{x})\int_{\Omega}h_q(\mathbf{x}-\mathbf{x}')\partial_2\Delta_k \bar{g}(\mathbf{x}')~d\mathbf{x}'\\
&=\int_{\Omega}h_q(\mathbf{x}-\mathbf{x}')(S_{k-1}\widetilde{f}^2(\mathbf{x}')-S_{k-1}\widetilde{f}^2(\mathbf{x}))\partial_2\Delta_k \bar{g}(\mathbf{x}')~d\mathbf{x}'\\
&=\int_{\Omega}h_q(\mathbf{x}-\mathbf{x}')\int_0^1(\mathbf{x}'-\mathbf{x})\cdot\nabla S_{k-1}\widetilde{f}^2(s\mathbf{x'}+(1-s)\mathbf{x})~ds\partial_2\Delta_k \bar{g}(\mathbf{x}')~d\mathbf{x}'\\
&=\int_{\Omega}\int_0^1h_q(\mathbf{z})\mathbf{z}\cdot\nabla S_{k-1}\widetilde{f}^2(\mathbf{x}-s\mathbf{z})\partial_2\Delta_k \bar{g}(\mathbf{x}-\mathbf{z})~ds d\mathbf{z},
\end{split}
\end{equation*}
where $h_q$ defined as in \eqref{hq}.

Making use of the anisotropic H\"older inequality, interpolation \eqref{interpolation}, Poincar\'e inequality \eqref{Poincare} and Bernstein inequality,
\begin{equation*}
\begin{split}
Q_{112} &=-\sum_{|k-q|\leq2}\int_{\Omega}[\Delta_q, S_{k-1}\widetilde{f}^2\partial_2]\Delta_k \bar{g} \cdot\Delta_q \widetilde{h}~dxdy  \\
&\leq C \sum_{| k-q | \leq 2}2^{-q}\left\|S_{k-1}\nabla \widetilde{f}^{2}\right\|_{L^\infty_yL^2_x}\left\|\partial_{2} \Delta_k\bar{g}\right\|_{L^{2}_y} \left\|\Delta_{q} \widetilde{h}\right\|_{L^{2}} \\
& \leq C\sum_{| k-q | \leq 2}2^{k-q}\left\|S_{k-1}\nabla \widetilde{f}^{2}\right\|_{L^2}^{\frac12}\left\|\pa_2S_{k-1}\nabla \widetilde{f}^{2}\right\|_{L^2}^{\frac12}\left\| \Delta_k{g}\right\|_{L^{2}} \left\|\pa_1\Delta_{q} \widetilde{h}\right\|_{L^{2}} \\
& \leq C2^{-2qs}b_q\left\|{\pa_1f}\right\|_{L^2}^{\frac12}\left\|\pa_1\pa_2 {f}\right\|_{L^2}^{\frac12}
\left\| {g}\right\|_{H^s} \left\| \pa_1h\right\|_{H^s},
\end{split}
\end{equation*} 
where we have used the relation $\nabla \widetilde{f}^2=(\pa_1\widetilde{f}^1,\pa_2\widetilde{f}^2)=(\pa_1\widetilde{f}^1,-\pa_1\widetilde{f}^1)$ in the lase step.\\
The similarly conclusion can also be drawn for $Q_{113}$ and $Q_{114}$ that
\begin{equation*}
\begin{split}
Q_{113} 
& \leq C2^{-2qs}b_q\left\|{\pa_1f}\right\|_{L^2}^{\frac12}\left\|\pa_1\pa_2 {f}\right\|_{L^2}^{\frac12}
\left\| \pa_1{g}\right\|_{H^s} \left\| h\right\|_{H^s},
\end{split}
\end{equation*} 
and
\begin{equation*}
\begin{split}
Q_{114} 
& \leq C2^{-2qs}b_q\left\|{\pa_1f}\right\|_{L^2}^{\frac12}\left\|\pa_1\pa_2 {f}\right\|_{L^2}^{\frac12}
\left\| \pa_1{g}\right\|_{H^s} \left\| h\right\|_{H^s}.
\end{split}
\end{equation*} 
Next we deal with $Q_{12}$, also making use the decomposition \eqref{decomposition},
\begin{equation*}
\begin{split}
Q_{12}&=-\sum_{|k-q|\leq2}\int_{\Omega}({ S_{k-1}\widetilde{f}^2-S_q\widetilde{f}^2})\partial_2\Delta_q \Delta_kg\cdot\Delta_q h~dxdy\\
&=-\sum_{|k-q|\leq2}\int_{\Omega}({ S_{k-1}\widetilde{f}^2-S_q\widetilde{f}^2})\partial_2\Delta_q \Delta_k\bar{g}\cdot\Delta_q \bar{h}~dxdy\\
&\quad-\sum_{|k-q|\leq2}\int_{\Omega}({ S_{k-1}\widetilde{f}^2-S_q\widetilde{f}^2})\partial_2\Delta_q \Delta_k\bar{g}\cdot\Delta_q \widetilde{h}~dxdy\\
&\quad-\sum_{|k-q|\leq2}\int_{\Omega}({ S_{k-1}\widetilde{f}^2-S_q\widetilde{f}^2})\partial_2\Delta_q \Delta_k\widetilde{g}\cdot\Delta_q \bar{h}~dxdy\\
&\quad-\sum_{|k-q|\leq2}\int_{\Omega}({ S_{k-1}\widetilde{f}^2-S_q\widetilde{f}^2})\partial_2\Delta_q \Delta_k\widetilde{g}\cdot\Delta_q \widetilde{h}~dxdy\\
&\triangleq Q_{121}+Q_{122}+Q_{123}+Q_{124}.
\end{split}
\end{equation*}
The same reasoning as $Q_{111}$ implies that $Q_{121}=0$. For $Q_{122}$, by the anisotropic H\"older inequality, Bernstein inequality, interpolation \eqref{interpolation} and Poincar\'e inequality \eqref{Poincare},
\begin{equation*}
\begin{split}
Q_{122} 
&\leq C \sum_{| k-q | \leq 2}2^{-q}\left\|\nabla\Delta_k \widetilde{f}^{2}\right\|_{L^\infty_yL^2_x}\left\|\partial_{2} \Delta_k\bar{g}\right\|_{L^{2}_y} \left\|\Delta_{q} \widetilde{h}\right\|_{L^{2}} \\
& \leq C\sum_{| k-q | \leq 2}2^{k-q}\left\|\nabla\Delta_k \widetilde{f}^{2}\right\|_{L^2}^{\frac12}\left\|\pa_2\nabla\Delta_k \widetilde{f}^{2}\right\|_{L^2}^{\frac12}\left\| \Delta_k{g}\right\|_{L^{2}} \left\|\pa_1\Delta_{q} \widetilde{h}\right\|_{L^{2}} \\
& \leq C2^{-2qs}b_q\left\|{\nabla f^2}\right\|_{L^2}^{\frac12}\left\|\pa_2\nabla {f}^2\right\|_{L^2}^{\frac12}
\left\| {g}\right\|_{H^s} \left\| \pa_1h\right\|_{H^s}\\
& \leq C2^{-2qs}b_q\left\|{\pa_1f}\right\|_{L^2}^{\frac12}\left\|\pa_1\pa_2 {f}\right\|_{L^2}^{\frac12}
\left\| {g}\right\|_{H^s} \left\| \pa_1h\right\|_{H^s},
\end{split}
\end{equation*} 
where we have used the relations
$\pa_2f^2=-\pa_1f^1$ and $\nabla f^2=(\pa_1 f^2, \pa_2 f^2)=(\pa_1 f^2, -\pa_1f^1)$ in the last step.\\
Similar arguments apply to the terms $Q_{123}$ and $Q_{124}$, we can see that
\begin{equation*}
\begin{split}
Q_{123} 
& \leq C2^{-2qs}b_q\left\|{\pa_1f}\right\|_{L^2}^{\frac12}\left\|\pa_1\pa_2 {f}\right\|_{L^2}^{\frac12}
\left\| \pa_1{g}\right\|_{H^s} \left\| h\right\|_{H^s},
\end{split}
\end{equation*} 
and
\begin{equation*}
\begin{split}
Q_{124} 
& \leq C2^{-2qs}b_q\left\|{\pa_1f}\right\|_{L^2}^{\frac12}\left\|\pa_1\pa_2 {f}\right\|_{L^2}^{\frac12}
\left\| \pa_1{g}\right\|_{H^s} \left\| h\right\|_{H^s}.
\end{split}
\end{equation*} 
Gathering the estimates for $Q_{11}$ and $Q_{12}$, we get the bound for $Q_{1}$ that
\begin{equation*}
\begin{split}
Q_{1} 
& \leq C2^{-2qs}b_q\left\|{\pa_1f}\right\|_{L^2}\left\|\pa_1\pa_2 {f}\right\|_{L^2}
 (\left\| g\right\|_{H^s}^2+\left\| h\right\|_{H^s}^2)
 +C\varepsilon_02^{-2qs}b_q
 (\left\| \pa_1{g}\right\|_{H^s}^2+\left\| \pa_1{h}\right\|_{H^s}^2)\\
 &\quad-\int_{\Omega}{S}_{q}\widetilde{f}^2\partial_2\Delta_q g\cdot\Delta_q h~dxdy.
\end{split}
\end{equation*}
Then we estimate $Q_2$. We first decompose $Q_2$ as follows
\begin{align*}
Q_2&=-\sum_{|k-q|\leq2}\int_{\Omega}\Delta_q(\Delta_kf^2 S_{k-1}\partial_2  g)\cdot\Delta_q h~dxdy\\
&=-\sum_{|k-q|\leq2}\int_{\Omega}\Delta_q(\Delta_k\widetilde{f}^2 S_{k-1}\partial_2  g)\cdot\Delta_q h~dxdy\\
&=-\sum_{|k-q|\leq2}\int_{\Omega}\Delta_q(\Delta_k\widetilde{f}^2 S_{k-1}\partial_2  \bar{g})\cdot\Delta_q \bar{h}~dxdy\\
&\quad-\sum_{|k-q|\leq2}\int_{\Omega}\Delta_q(\Delta_k\widetilde{f}^2 S_{k-1}\partial_2  \bar{g})\cdot\Delta_q \widetilde{h}~dxdy\\
&\quad-\sum_{|k-q|\leq2}\int_{\Omega}\Delta_q(\Delta_k\widetilde{f}^2 S_{k-1}\partial_2  \widetilde{g})\cdot\Delta_q \bar{h}~dxdy\\
&\quad-\sum_{|k-q|\leq2}\int_{\Omega}\Delta_q(\Delta_k\widetilde{f}^2 S_{k-1}\partial_2  \widetilde{g})\cdot\Delta_q \widetilde{h}~dxdy\\
&\triangleq Q_{21}+Q_{22}+Q_{23}+Q_{24}.
\end{align*} 
Similar as $Q_{111}$, we can check at once that $Q_{21}=0$. Owing to anisotropic H\"older inequality, Bernstein inequality, interpolation \eqref{interpolation} and Poincar\'e inequality \eqref{Poincare}, we write that
\begin{align*}
Q_{22} &=-\sum_{|k-q|\leq2}\int_{\Omega}\Delta_q(\Delta_k\widetilde{f}^2 S_{k-1}\partial_2  \bar{g})\cdot\Delta_q \widetilde{h}~dxdy \\
&\leq C \sum_{| k-q | \leq 2}\left\|\Delta_k \widetilde{f}^{2}\right\|_{L^{\infty}_yL^{2}_x}\left\|\partial_{2} S_{k-1}\bar{g}\right\|_{L^2_y} \left\|\Delta_{q} \widetilde{h}\right\|_{L^{2}} \\
&\leq C \sum_{| k-q | \leq 2}\left\|\Delta_k \widetilde{f}^{2}\right\|_{L^2}^{\frac12}\left\|\pa_2\Delta_k \widetilde{f}^{2}\right\|_{L^2}^{\frac12}\left\|\partial_{2} S_{k-1}\bar{g}\right\|_{L^2_y} \left\|\Delta_{q} \widetilde{h}\right\|_{L^{2}} \\
&\leq C \sum_{| k-q | \leq 2}\left\|\Delta_k \widetilde{f}^{2}\right\|_{L^2}^{\frac12}\left\|\pa_1\Delta_k \widetilde{f}^{1}\right\|_{L^2}^{\frac12}\left\|\partial_{2} S_{k-1}\bar{g}\right\|_{L^2_y} \left\|\pa_1\Delta_{q} \widetilde{h}\right\|_{L^{2}} \\
& \leq C2^{-2qs}b_q\left\|{\pa_2g}\right\|_{L^2}
\left\| \pa_1{f}\right\|_{H^s} \left\| \pa_1h\right\|_{H^s}.
\end{align*} 
Along the same way,
\begin{equation*}
\begin{split}
Q_{23} &=-\sum_{|k-q|\leq2}\int_{\Omega}\Delta_q(\Delta_k\widetilde{f}^2 S_{k-1}\partial_2  \widetilde{g})\cdot\Delta_q \bar{h}~dxdy\\
&\leq C \sum_{| k-q | \leq 2}\left\|\Delta_k \widetilde{f}^{2}\right\|_{L^{\infty}_yL^{2}_x}\left\|\partial_{2} S_{k-1}\widetilde{g}\right\|_{L^2} \left\|\Delta_{q} \bar{h}\right\|_{L^{2}_y} \\
&\leq C \sum_{| k-q | \leq 2}\left\|\Delta_k \widetilde{f}^{2}\right\|_{L^2}^{\frac12}\left\|\pa_2\Delta_k \widetilde{f}^{2}\right\|_{L^2}^{\frac12}\left\|\pa_1\partial_{2} S_{k-1}\widetilde{g}\right\|_{L^2} \left\|\Delta_{q} \bar{h}\right\|_{L^{2}_y} \\
&\leq C \sum_{| k-q | \leq 2}\left\|\pa_1\Delta_k \widetilde{f}^{2}\right\|_{L^2}^{\frac12}\left\|\pa_1\Delta_k \widetilde{f}^{1}\right\|_{L^2}^{\frac12}\left\|\pa_1\partial_{2} S_{k-1}\widetilde{g}\right\|_{L^2} \left\|\Delta_{q} \bar{h}\right\|_{L^{2}_y} \\
& \leq C2^{-2qs}b_q\left\|{\pa_1\pa_2g}\right\|_{L^2}
\left\| \pa_1{f}\right\|_{H^s} \left\| h\right\|_{H^s}.
\end{split}
\end{equation*} 
In the same manner we can see that
\begin{equation*}
\begin{split}
Q_{24} \leq C2^{-2qs}b_q\left\|{\pa_1\pa_2g}\right\|_{L^2}
\left\| \pa_1{f}\right\|_{H^s} \left\| h\right\|_{H^s}.
\end{split}
\end{equation*} 
Thus we conclude that 
\begin{equation*}
\begin{split}
Q_{2} &\leq C2^{-2qs}b_q\left\|{\pa_1\pa_2g}\right\|_{L^2}^2\left\| h\right\|_{H^s}^2
+ C\varepsilon_02^{-2qs}b_q\left\| \pa_1{f}\right\|_{H^s}^2\\
&\quad+C2^{-2qs}b_q\left\|{\pa_2g}\right\|_{L^2}
(\left\| \pa_1{f}\right\|_{H^s}^2+ \left\| \pa_1h\right\|_{H^s}^2).
\end{split}
\end{equation*} 
Finally we deal with $Q_3$. Similar as $Q_2$, we first divide it into the following four parts,
\begin{align*}
Q_3&=-\sum_{k\geq q-1}\sum_{|k-l|\leq1}\int_{\Omega}\Delta_q(\Delta_kf^2 \Delta_l\partial_2  g)\cdot\Delta_q h~dxdy\\
&=-\sum_{k\geq q-1}\sum_{|k-l|\leq1}\int_{\Omega}\Delta_q(\Delta_k\widetilde{f}^2 \Delta_l\partial_2  g)\cdot\Delta_q h~dxdy\\
&=-\sum_{k\geq q-1}\sum_{|k-l|\leq1}\int_{\Omega}\Delta_q(\Delta_k\widetilde{f}^2 \Delta_l\partial_2  \bar{g})\cdot\Delta_q \bar{h}~dxdy\\
&\quad-\sum_{k\geq q-1}\sum_{|k-l|\leq1}\int_{\Omega}\Delta_q(\Delta_k\widetilde{f}^2 \Delta_l\partial_2  \bar{g})\cdot\Delta_q \widetilde{h}~dxdy\\
&\quad-\sum_{k\geq q-1}\sum_{|k-l|\leq1}\int_{\Omega}\Delta_q(\Delta_k\widetilde{f}^2 \Delta_l\partial_2  \widetilde{g})\cdot\Delta_q \bar{h}~dxdy\\
&\quad-\sum_{k\geq q-1}\sum_{|k-l|\leq1}\int_{\Omega}\Delta_q(\Delta_k\widetilde{f}^2 \Delta_l\partial_2  \widetilde{g})\cdot\Delta_q \widetilde{h}~dxdy\\
&\triangleq Q_{31}+Q_{32}+Q_{33}+Q_{34}.
\end{align*}
A trivial verification shows that $Q_{31}=0$. For $Q_{32}$, we can handle it by
\begin{equation*}
\begin{aligned}
Q_{32}&=-\sum_{k\geq q-1}\sum_{|k-l|\leq1}\int_{\Omega}\Delta_q(\Delta_k\widetilde{f}^2 \Delta_l\partial_2  \bar{g})\cdot\Delta_q \widetilde{h}~dxdy\\
&\leq C \sum_{k\geq q-1}\sum_{|k-l|\leq1}\left\|\Delta_k \widetilde{f}^{2}\right\|_{L^2}\left\|\partial_{2} \Delta_l\bar{g}\right\|_{L^2_y} \left\|\Delta_{q} \widetilde{h}\right\|_{L^\infty_yL^{2}_x} \\
& \leq C\sum_{k\geq q-1}\sum_{|k-l|\leq1}\left\|\Delta_k \widetilde{f}^{2}\right\|_{L^2}^{\frac12}2^{-\frac k2}\left\|\nabla\Delta_k \widetilde{f}^{2}\right\|_{L^2}^{\frac12}\left\|\partial_{2} \Delta_l\bar{g}\right\|_{L^{2}_y}  2^{\frac q2}\left\|\Delta_{q} \widetilde{h}\right\|_{L^{2}}\\
& \leq C\sum_{k\geq q-1}2^{\frac q2}2^{-\frac k2}\left\|\pa_1\Delta_k \widetilde{f}^{2}\right\|_{L^2}^{\frac12}\left\|\pa_1\Delta_k \widetilde{f}\right\|_{L^2}^{\frac12}\left\|\partial_{2} {g}\right\|_{L^{2}}  \left\|\pa_1\Delta_{q} \widetilde{h}\right\|_{L^{2}} \\ 
& \leq C2^{-2qs}b_q\left\|\pa_2{g}\right\|_{L^2}
\left\|\partial_{1} {f}\right\|_{H^s} \left\| \pa_1h\right\|_{H^s},
\end{aligned}
\end{equation*}
where we have used the relation $\nabla \widetilde{f}^2=(\pa_1\widetilde{f}^2,\pa_2\widetilde{f}^2)=(\pa_1\widetilde{f}^2,-\pa_1\widetilde{f}^1)$.\\
$Q_{33}$ can be bounded similarly,
\begin{equation*}
\begin{aligned}
Q_{33}&=-\sum_{k\geq q-1}\sum_{|k-l|\leq1}\int_{\Omega}\Delta_q(\Delta_k\widetilde{f}^2 \Delta_l\partial_2  \widetilde{g})\cdot\Delta_q \bar{h}~dxdy\\
&\leq C \sum_{k\geq q-1}\sum_{|k-l|\leq1}\left\|\Delta_k \widetilde{f}^{2}\right\|_{L^2}\left\|\partial_{2} \Delta_l\widetilde{g}\right\|_{L^2} \left\|\Delta_{q} \bar{h}\right\|_{L^\infty_y} \\
& \leq C\sum_{k\geq q-1}\sum_{|k-l|\leq1}\left\|\Delta_k \widetilde{f}^{2}\right\|_{L^2}^{\frac12}2^{-\frac k2}\left\|\nabla\Delta_k \widetilde{f}^{2}\right\|_{L^2}^{\frac12}\left\|\partial_{2} \Delta_l\widetilde{g}\right\|_{L^{2}}  2^{\frac q2}\left\|\Delta_{q} \bar{h}\right\|_{L^{2}_y}\\
& \leq C\sum_{k\geq q-1}2^{\frac q2}2^{-\frac k2}\left\|\pa_1\Delta_k \widetilde{f}\right\|_{L^2}\left\|\pa_1\partial_{2} \widetilde{g}\right\|_{L^{2}}  \left\|\Delta_{q} \bar{h}\right\|_{L^{2}_y} \\ 
& \leq C2^{-2qs}b_q\left\|\pa_1\pa_2{g}\right\|_{L^2}
\left\|\partial_{1} {f}\right\|_{H^s} \left\| h\right\|_{H^s}.
\end{aligned}
\end{equation*}
The same estimate remains valid for $Q_{34}$ that
\begin{equation*}
\begin{aligned}
Q_{34}\leq C2^{-2qs}b_q\left\|\pa_1\pa_2{g}\right\|_{L^2}
\left\|\partial_{1} {f}\right\|_{H^s} \left\| h\right\|_{H^s}.
\end{aligned}
\end{equation*}
Using Young's inequality, we can get the estimate for $Q_3$ that
\begin{equation*}
\begin{aligned}
Q_{3}&\leq C2^{-2qs}b_q\left\|\pa_1\pa_2{g}\right\|_{L^2}^2\left\| h\right\|_{H^s}^2+C\varepsilon_02^{-2qs}b_q\left\|\partial_{1} {f}\right\|_{H^s}^2\\
&\quad +C2^{-2qs}b_q\left\|\pa_2{g}\right\|_{L^2}
(\left\|\partial_{1} {f}\right\|_{H^s}^2+ \left\| \pa_1h\right\|_{H^s}^2).
\end{aligned}
\end{equation*}
Taking the estimates for $Q_{1}-Q_{3}$, $P_{1}-P_{3}$ into account, finally we can obtain
\begin{equation}
\begin{aligned}
&\quad-\int_{\Omega}\Delta_q(f\cdot \nabla g)\cdot\Delta_q h~dxdy\\
& \leq 
C2^{-2qs}b_q(\left\|\pa_1{f}\right\|_{L^2}\left\|\pa_1\pa_2 {f}\right\|_{L^2}+\left\|{f}\right\|_{L^2}^2\left\|\pa_1 {f}\right\|_{L^2}^2+\left\|{f}\right\|_{L^2}^2\left\|\pa_1\pa_2 {f}\right\|_{L^2}^2\\
&\quad+\left\|\pa_1{g}\right\|_{L^2}\left\|\pa_1\pa_2 {g}\right\|_{L^2}+\left\|{\pa_1\pa_2g}\right\|_{L^2}^2)\times(\left\| {f}\right\|_{H^s}^2+ \left\| g\right\|_{H^s}^2+\left\| h\right\|_{H^s}^2 ) \\
&\quad+2^{-2qs}b_q(\left\|{f}\right\|_{L^2}^{\frac12}\left\|\pa_2 {f}\right\|_{L^2}^{\frac12}+\left\|{\pa_2g}\right\|_{L^2})\times
(\left\| \pa_1{f}\right\|_{H^s}^2+\left\|\partial_{1} {g}\right\|_{H^s}^2+ \left\| \pa_1h\right\|_{H^s}^2)\\
&\quad+C\varepsilon_02^{-2qs}b_q(\left\| \pa_1{f}\right\|_{H^s}^2+\left\|\partial_{1} {g}\right\|_{H^s}^2+ \left\|\pa_1 h\right\|_{H^s}^2)-\int_{\Omega}{S}_{q}\widetilde{f}^2\partial_2\Delta_q g\cdot\Delta_q h~dxdy,
\end{aligned}
\end{equation} 
which completes the proof of this lemma.
\end{proof}
	
\end{section}

\vskip .4in

\section*{Acknowledgements}
The authors would like to thank Professor Jiahong Wu for helpful discussion.
M. Paicu is partially supported by the Agence Nationale de la Recherche, Project IFSMACS, grant ANR-15-CE40-0010. N. Zhu was partially supported by the National Natural Science Foundation of China (Grant No. 11771043 and No. 11771045). 

\vskip .3in

\end{spacing}

\end{document}